\journalname{}
\newtheorem{thm}{Theorem}[section]
\newtheorem{rem}{Remark}[section]
\newtheorem{lem}{Lemma}[section]
\newtheorem{fact}{Fact}[section]
\newtheorem{algo}{Algorithm}[section]
\newtheorem{asmp}{Assumption}[section]
\def\be{\begin{eqnarray}}
\def\ee{\end{eqnarray}}
\def\ben{\begin{eqnarray*}}
\def\een{\end{eqnarray*}}
\def\ba{\begin{array}}
\def\ea{\end{array}}
\def\bi{\begin{itemize}}
\def\ei{\end{itemize}}
\def\cX{{\mathcal X}}
\def\cY{{\mathcal Y}}
\def\cO{{\mathcal O}}
\def\bR{{\mathbb R}}
\def\cL{{\mathcal L}}
\def\cG{{\mathcal G}}
\def\prox{{\rm Prox}}
\def\R{{\mathbb R}}
\def\dom{{\mathbf{dom}}}
\def\interior{{\mathbf{int}}}
\def\dist{{\rm dist}}
\newcommand{\la}{\lambda}
\begin{document}

\title{A convex combination based primal-dual algorithm with linesearch for general convex-concave saddle point problems}
\titlerunning{PDA with convex combination for nonlinear saddle point problems}
\author{Xiaokai Chang$^{1}$
\and  Junfeng Yang$^2$
\and Hongchao Zhang$^3$
}

\institute{1 \ \ School of Science, Lanzhou University of Technology, Lanzhou, Gansu, P. R. China. Research supported by the National Natural Science Foundation of China (NSFC-12161053) and the Natural Science Foundation for Distinguished Young Scholars of Gansu Province (22JR5RA223). Email: xkchang@lut.edu.cn. \\
2 \ \ Department of Mathematics, Nanjing University, \#22 Hankou Road, Nanjing, P. R. China.  
Research supported by the National Natural Science Foundation of China (NSFC-12371301).
Corresponding author (jfyang@nju.edu.cn,  \url{http://maths.nju.edu.cn/~jfyang/}) \\
3 \ \ Department of Mathematics, Louisiana State University, Baton Rouge, LA 70803-4918. Phone (225) 578-1982. Fax (225) 578-4276.
Research supported by the National Natural Science Foundation of U.S.A (DMS-2110722
and DMS-2309549). Email: hozhang@math.lsu.edu. \url{http://www.math.lsu.edu/~hozhang/}
}

\date{Received: date / Accepted: date}

\maketitle

\begin{abstract}
Using convex combination and linesearch techniques, we introduce a novel primal-dual algorithm for solving structured convex-concave saddle point problems with a generic smooth non-bilinear coupling term. Our adaptive linesearch strategy works under specific local smoothness conditions, allowing for potentially larger stepsizes. For an important class of structured convex optimization problems, the proposed algorithm reduces to a fully adaptive proximal gradient algorithm without linesearch, thereby representing an advancement over the golden ratio algorithm delineated in [Y. Malitsky, Math. Program. 2020]. We establish global pointwise and ergodic sublinear convergence rate of the algorithm measured by the primal-dual gap function in the general case. When the coupling term is linear in the dual variable, we measure the convergence rate by function value residual and constraint violation of an equivalent constrained optimization problem. Furthermore, an accelerated algorithm achieving the faster ${\cal O}(1/N^2)$ ergodic convergence rate is presented for the strongly convex case, where $N$ denotes the iteration number.  Our numerical experiments on quadratically constrained quadratic programming and sparse logistic regression problems indicate the new algorithm is significantly faster than the comparison algorithms.
\end{abstract}

\keywords{Convex-concave saddle point problems \and non-bilinear coupling term \and convex combination \and primal-dual algorithm  \and linesearch}

\subclass{49M29 \and  65K10 \and 65Y20 \and 90C25 }

\section{Introduction}
\label{sec:intro}
Let $\R^p$ and $\R^q$ be finite-dimensional Euclidean spaces, each endowed with an inner product and the induced norm denoted by $\langle\cdot, \cdot\rangle$ and $\|\cdot\| =\sqrt{\langle\cdot,\cdot\rangle}$, respectively. Let $f:\R^p \rightarrow(-\infty, +\infty]$ and $g:\R^q \rightarrow(-\infty, +\infty]$ be extended real-valued proper closed and convex functions,  $\Phi: \dom(g) \times \dom(f^*)\rightarrow  \R$ be a continuous function with certain differentiability properties, convex in $x$ and concave in $y$, where $\dom(g)$ denotes the effective domain of $g$ defined by $\dom(g):= \{ x\in\R^m: g(x) < +\infty\}$. Denote the Legendre-Fenchel conjugate of $f$ by $f^*$, i.e., $f^*(y) = \sup_{u\in\R^p}\{ \langle y, u\rangle - f(u)\}$, $y\in\R^p$.
In this paper, we focus on the following structured convex-concave saddle point problem with a generic non-bilinear coupling term
\be\label{saddle_point}
\min\limits_{x\in \R^q}\max\limits_{y\in \R^p}  \cL(x, y):= g(x)+\Phi(x,y)-f^*(y).
\ee
This generic model encompasses several important special cases studied in the literature.  For instance, if $\Phi$ is bilinear, i.e., $\Phi(x,y)=\langle Kx,y\rangle$ for some matrix  $K\in\bR^{p\times q}$, then \eqref{saddle_point} reduces to the structured bilinear saddle point problem, which arises naturally from abundant interesting applications, including signal and image processing, machine learning, statistics, mechanics and economics, and so on, see, e.g., \cite{Chambolle2011A,Bouwmans2016Handbook,Yang2011Alternating,Hayden2013A,Bertsekas1982Projection}
and the references therein.  If, in addition, $g$ and $f^*$ are indicator functions of the unit simplex in $\R^q$ and $\R^p$, respectively, then
\eqref{saddle_point} reduces to the zero-sum matrix game problem.
If both $g$ and $f^*$ vanish, then \eqref{saddle_point} reduces to the unconstrained saddle point problem \cite{App2018GANs,MOS2020}, which has recently found abundant new applications in machine learning \cite{Goodfellow2014nets,Arjovsky2017nets}.
In general,  \eqref{saddle_point} covers a broad class of optimization problems, e.g., convex optimization with nonlinear conic constraints, which itself includes linear programming, quadratic programming, quadratically constrained quadratic programming, second-order cone programming and semidefinite programming as its subclasses, see \cite{EYNS2021} for details.
%

To solve \eqref{saddle_point}, popular choices include  various  first-order methods. By reformulating
\eqref{saddle_point} as a mixed variational inequality (MVI) problem and/or monotone operator inclusion problem, various methods have been developed in the literature, e.g., the extra-gradient method \cite{Korpelevich1976,Tseng2000A} and the optimistic gradient descent ascent method \cite{App2018GANs,MOS2020}.
In this paper, motivated from the convex combination technique introduced in \cite{Malitsky2019Golden}, we propose a new primal-dual algorithm (PDA) with adaptive linesearch. Primal-dual type algorithms were first introduced by Arrow and Hurwicz \cite{Uzawa58} and were recently further developed extensively, see \cite{Chambolle2011A,He2014On,EYNS2021,ChY2020Golden,Unified2023PDA} and the references therein.


\subsection{Notation}
Let $h$ be an extended real-valued proper closed and convex function defined on a finite dimensional Euclidean space $\R^m$.
The gradient operator and the subdifferential of $h$ at $x\in \R^m$, if exist, are, respectively, denoted by $\nabla h$ and $\partial h(x) := \{\xi\in\R^m: \, h(y) \geq h(x) + \langle \xi, y-x\rangle, \; \forall\, y\in\dom(h)\}$.
For $\lambda >0$, the proximal operator  of $\lambda h$ is given by
$  \prox_{\lambda h}(x) := \arg\min_{y} \{h(y) + {1\over 2\lambda }\|y-x\|^2\}$,
%
%
which is uniquely well defined for any  $x\in \R^m$.
If $h$ is differentiable on  $\interior(\dom(h))$, the interior of $\dom(h)$, and $\nabla h$ is Lipschitz continuous with a Lipschitz constant $L_h\geq0$, i.e., $\|\nabla h(x)-\nabla h(y)\|\leq L_h\|x-y\|$ for all $x, y \in \interior(\dom(h))$, then we say that $h$ is $L_h$-smooth.
$\nabla h$ is said to be locally Lipschitz continuous if it is Lipschitz continuous over any bounded subset of $\interior(\dom(h))$.
Given an index set $\mathcal{I}$, $|\mathcal{I}|$ denotes its cardinality.
By convention, we define $1/0=\infty$, in case it happens.


\subsection{Related Work}
Next, we review some closely related work.
The Arrow-Hurwicz method \cite{Uzawa58} was initially proposed to treat \eqref{saddle_point} with bilinear coupling term, i.e., $\Phi(x,y)=\langle Kx,y\rangle$.
A natural extension to the generic saddle point problem \eqref{saddle_point} iterates for $n\geq 1$ as
\ben
\left\{
\ba{l}
x_{n}=\prox_{\tau_n g}\big(x_{n-1}-\tau_n  \nabla_x\Phi(x_{n-1},y_{n-1})\big), \smallskip\\
y_{n}=\prox_{\sigma_n f^*}\big(y_{n-1}+\sigma_n  \nabla_y\Phi(x_{n},y_{n-1})\big),
\ea\right.
\een
where $x_0\in\dom(g)$ and $y_0\in\dom(f^*)$ are initial points, and $\tau_n, \sigma_n > 0$ are stepsizes.
The heuristics of the Arrow-Hurwicz method
is to solve the minimax problem \eqref{saddle_point} by alternatingly minimizing with $x$, maximizing with $y$ and meanwhile incorporating the proximity technique by taking into account the latest information.
Convergence of the Arrow-Hurwicz method for the bilinear case was studied with small stepsizes in \cite{Esser2010General}, and sublinear convergence rate results were obtained in
\cite{Chambolle2011A,Nedic2009Subgradient} when $\dom(f^*)$ is bounded.
However, the Arrow-Hurwicz method does not converge in general, see \cite{He2014On,He22On} for counterexamples.  To remedy this issue, Korplevich \cite{Korpelevich1976} and Popov \cite{Popov1980} proposed two different modifications by introducing extrapolation and optimism into the Arrow-Hurwicz method, respectively. For smooth and strongly-convex-strongly-concave objective functions, it was shown to converge linearly \cite{Tseng1995,MOS2020}. The mirror-prox method \cite{Nemirovski04siam} generalizes the extragradient method and works with a general Bregman distance.

Based on the PDA popularized by Chambolle and Pock \cite{Chambolle2011A,Chambolle2016ergodic}, which is merely applicable to the bilinear case too, Zhu et.al \cite{Zhu23On} considered a special case of \eqref{saddle_point}, i.e., $\Phi(x,y) = \langle H(x),y\rangle$, where $H: \R^q\rightarrow \R^p$ is nonlinear and smooth such that $f\circ H$ is convex, and studied a PDA with convergence rate analysis.
Recently, Hamedani and Aybat \cite{EYNS2021} focused on the generic problem \eqref{saddle_point} and proposed the following scheme
\be\label{pda_CP}
\left\{
\ba{l}
x_{n}=\prox_{\tau_n g}\big(x_{n-1}-\tau_n \nabla_x\Phi(x_{n-1},y_{n-1})\big), \smallskip\\
z_{n}=(1+\delta)\nabla_y\Phi(x_{n},y_{n-1}) -\delta \nabla_y\Phi(x_{n-1},y_{n-2}), \smallskip\\
y_{n}=\prox_{\sigma_n f^*}\big(y_{n-1}+\sigma_n z_n\big),
\ea\right.
\ee
where $\delta \in (0,1]$.
Apparently, \eqref{pda_CP} can be viewed as an extension of Chambolle and Pock's PDA \cite{Chambolle2011A,Chambolle2016ergodic} from the bilinear case to the generic nonlinear case. A linesearch strategy was also considered in \cite{EYNS2021} to choose the primal and dual stepsizes.
%
%
%

Recently, Malitsky \cite{Malitsky2019Golden} proposed  a golden ratio algorithm (GRA) with fully adaptive stepsize for solving MVI problem.
Since the optimality condition of \eqref{saddle_point} can be represented by MVI problem, GRA can thereby be applied.
The iterate scheme of GRA applied to \eqref{saddle_point} takes a Jacobian  form
\ben
\left\{\ba{ll}
 (z_{n}^x, z_{n}^y)
 =\frac{\psi-1}{\psi} (x_{n-1}, y_{n-1})
                       + \frac{1}{\psi}(z_{n-1}^x, z_{n-1}^y),\smallskip \\
 x_{n}=\prox_{\tau_n g}(z_{n}^x-\tau_n \nabla_x\Phi(x_{n-1},y_{n-1})),  \smallskip \\
 y_{n}=\prox_{\tau_n f^*}(z_{n}^y+\tau_n \nabla_y\Phi(x_{n-1},y_{n-1})),
 \ea\right.
\een
where $\psi\in(1,(1+\sqrt{5})/2]$ is a parameter to determine the convex combination $z_n$, and the stepsize $\tau_n>0$ can be estimated adaptively.
Unfortunately, numerical experiments show that straightforward application to the MVI representation of  \eqref{saddle_point} is much less efficient than primal-dual type methods, e.g., the PDA scheme \eqref{pda_CP}, which are able to take advantage of problem structures thoroughly and can take different stepsizes in the primal and dual subproblems.
Motivated by \cite{Malitsky2019Golden}, a golden ratio PDA (GRPDA) was presented in \cite{ChY2020Golden,ChYZ2022GRPDAL}, with constant or adaptive stepsize determined by linesearch for solving the bilinear cases of  \eqref{saddle_point}.
Based on experimental evidence in \cite{ChYZ2022GRPDAL}, it has been observed that the 
additional linesearch trial steps taken by GRPDAs are 
much less than
those required by Chambolle-Pock's PDA with linesearch \cite{Malitsky2018A}.
In \cite{ChY2022relaxed}, by carrying out a refined analysis, the region of convex combination parameter was expanded further  from $(1, (1+\sqrt{5})/2]$ to $(1, 1+\sqrt{3})$, which increased the weight of $x_{n-1}$ in the convex combination, leading to improved numerical performance.

Considering the aforementioned advantageous properties, our study in this paper is
an extension of GRPDA for convex-concave saddle point problem  \eqref{saddle_point} with coupling term from
 the bilinear case to the generic non-bilinear case.
Roughly speaking, given $z_0=x_0\in\dom(g)$ and $y_0\in\dom(f^*)$, for $n\geq 1$, our new algorithm
takes the following iterations
\be\label{GRPDA}
\mbox{PDAc:}~~\left\{\ba{rcl}
 z_{n}&=&\frac{\psi-1}{\psi} x_{n-1} + \frac{1}{\psi}z_{n-1}, \smallskip \\
 x_{n}&=&\prox_{\tau_n g} \big(z_{n}-\tau_n \nabla_x\Phi(x_{n-1},y_{n-1})\big), \smallskip \\
 y_{n}&=&\prox_{\sigma_n f^*}\big(y_{n-1}+\sigma_n \nabla_y\Phi(x_{n},y_{n-1})\big),
 \ea\right.\ee
where $\psi$ lies in the broader region $(1, 1+\sqrt{3})$ as obtained in \cite{ChY2022relaxed} for
 the bilinear case, and $\tau_n, \sigma_n > 0$ are stepsizes to be determined by
newly designed adaptive linesearch.

\subsection{Contributions}


To solve \eqref{saddle_point}, existing algorithms require knowledge of the global Lipschitz constants of $\nabla_x\Phi$ and $\nabla_y\Phi$ with respect to $x$, $y$, and $(x,y)$,
which can be challenging to obtain in practice and  poor estimates of these constants can
significantly deteriorate the practical performance.
Moreover, even with known Lipschitz constants, the
stepsizes derived from these global Lipschitz constants are usually much overconservative
since they fail to utilize local geometry, resulting slow practical convergence.
  Although the backtracking linesearch scheme proposed in \cite{EYNS2021} estimates stepsizes adaptively, it requires to update both variables $x$ and $y$ in every trial linesearch step, leading to expensive computational cost and potentially lower efficiency.

Our first contribution is to propose an algorithm with adaptive linesearch
that addresses all the above mentioned limitations.
In particular, the new algorithm is in the convex combination based PDA framework (\ref{GRPDA}),
and applies a novel linesearch to estimate stepsizes adaptively. Unlike previous approaches, our algorithm only updates the dual variable $y$ for each linesearch trial step, rather than updating both variables. More importantly, our algorithm does not require any prior knowledge of the global Lipschitz constants and instead utilizes local geometry in the linesearch to improve the overall performance,
making the algorithm much more efficient across various applications.

Another contribution of this paper is a fully adaptive algorithm for solving the structural convex optimization problem
\begin{equation}
   \label{composite_opt}
\min_{x\in\R^q} g(x)+ h(x) \text{~~with~~} h(x) := \frac{1}{p}\sum\nolimits_{i=1}^p h_i(x),
\end{equation}
where, for each $i=1,\ldots, p$, $h_i: \R^q\rightarrow\R$ is convex and differentiable with locally Lipschitz continuous gradient $\nabla h_i$.
Let $H(x) := (h_1(x), \ldots, h_p(x))^\top$, $\mathds{1} := (1,\ldots, 1)^\top \in\R^p$,
and $\iota_{\mathds{1}/p }$ be the indicator function of the singleton $\{\mathds{1}/p\}$.
Then, \eqref{composite_opt} can be reformulated as
\be\label{pd_gf}
\min_{x\in\R^q}\max_{y\in\R^p} g(x)+\langle y, H(x)\rangle-\iota_{\mathds{1}/p }(y),
\ee
which is apparently a special case of \eqref{saddle_point}.
By applying our proposed algorithm in the paper (Algorithm \ref{algo1}) to this special case, we
obtain a fully adaptive proximal gradient method with convex combination,
which gives explicit stepsizes based on the available information computed through the iterations,
without the need of a linesearch procedure and any prior knowledge of Lipschitz constants,
see Section \ref{sec:adaptive} for details.

Moreover,  we establish the iterative global convergence and sublinear ergodic convergence rate
of the proposed algorithm under local Lipschitz continuity assumptions of  $\nabla_x\Phi$ and $\nabla_y\Phi$,
while many algorithms (e.g. \cite{EYNS2021,Nemirovski04siam}) require global
Lipschitz continuity properties for ensuring convergence.
Specifically, for the general nonbiliear case \eqref{saddle_point}, the primal-dual gap $\cL(x_{n},y^\star)- \cL(x^\star, y_n)$ is adopted to quantify the convergence rate,  where $(x^\star,y^\star)$ is any saddle point of $\cL(\cdot)$.
When the coupling term $\Phi(x,y)$ is linear in one of the variables, we first reformulate \eqref{saddle_point} as a constrained optimization problem and then establish convergence rate results using function value residual and constraint violation as in
\cite{SabT22SIOPT,Teboulle2014Rate,ChYZ2022GRPDAL}.
Furthermore, we propose an accelerated algorithm achieving faster convergence rate
for the strongly convex case. Our numerical experiments
on quadratically constrained quadratic programming (QCQP) and  sparse logistic regression (SLR) problems
show that the proposed algorithms are significantly faster (or even on the order of faster for solving
QCQP problems) than the comparison algorithms.

\subsection{Organization}
The organization of the remaining paper is outlined as follows. Section \ref{sec:asmp} provides basic assumptions, necessary facts, and notation. The main algorithm, a variant of PDAc with linesearch to determine stepsizes, is introduced in Section \ref{sec-GRPDA-L}. Convergence results and sublinear convergence rate results are also established in this section.
In Section \ref{sec:special problem}, we focus on the nonlinear compositional convex optimization problem, provide a different analysis based on function value residual and constraint violation and introduce an accelerated algorithm for the strongly convex case.
In Section \ref{sec:adaptive}, we demonstrate that the proposed approach reduces to a fully adaptive proximal gradient method  when applied to \eqref{composite_opt}.
Section \ref{sec-experiments} presents numerical results on QCQP and SRL problems. Comparisons with state-of-the-art algorithms are included as well.
Finally, Section \ref{sec-conclusion} provides some concluding remarks.

\section{Assumptions and Preliminaries}\label{sec:asmp}
The assumption of global Lipschitz continuity of $\nabla_x\Phi$ and $\nabla_y\Phi$, which is commonly used in many existing results \cite{EYNS2021,Nemirovski04siam}, may not hold in practice for many functions.
In this paper, we show that local Lipschitz continuity of $\nabla_x\Phi$ and $\nabla_y\Phi$ is sufficient for establishing convergence of the adaptive PDA algorithm with linesearch proposed in this paper.
A pair $(x^\star,y^\star) \in \dom(g)\times \dom(f^*)$ is said to be a saddle point of $\cal L(\cdot)$ or \eqref{saddle_point} if it satisfies
\ben
\cL({x^{\star}},y) \leq \cL({x^{\star}},{y^{\star}}) \leq\cL(x,{y^{\star}}) \text{~~for all~~} (x, y) \in \dom(g)\times \dom(f^*).
\een
We denote the set of  all  saddle points of $\cL(\cdot)$ by $\Omega$, i.e.,
\begin{equation}
\label{def:Omega}
\Omega = \{({x^{\star}}, {y^{\star}}) \in \dom(g)\times \dom(f^*)  \mid  -\nabla_x\Phi({x^{\star}}, {y^{\star}})  \in \partial g({x^{\star}}), \,\, \nabla_y\Phi({x^{\star}}, {y^{\star}})  \in \partial f^*({y^{\star}})\}.
\end{equation}
Throughout the paper, we make the following blanket assumptions.
\begin{asmp}\label{asmp-1}
Assume that problem \eqref{saddle_point} has at least one saddle point, i.e., $\Omega\neq \emptyset$. Moreover, $\dom(g)\times \dom(f^*)\subseteq \dom(\Phi )$ and
$\cL({x^{\star}},{y^{\star}})$ is finite.
\end{asmp}

\begin{asmp}\label{asmp-0}
Assume that    $\Phi: \dom(g) \times \dom(f^*)\rightarrow  \R$ is continuous such that
\begin{enumerate}
  \item[(i)] (\textbf{convexity and concavity})
for any $y\in \dom(f^*)$, $\Phi (\cdot, y)$ is  convex  and differentiable w.r.t. the first component, and for any $x \in \dom(g)$, $\Phi (x,\cdot )$ is concave  and differentiable w.r.t. the second component;

\item[(ii)] (\textbf{local Lipschitz continuity})
for any bounded subsets $\cX \subset \bR^q$ and $\cY \subset \R^p$, there exist  $L_{yy}\geq 0, L_{xx}\geq 0$, $L_{xy}> 0$ such that
for any $x, \tilde{x} \in \cX \cap \dom(g)$ and $y, \tilde{y} \in \cY \cap \dom(f^*)$ there hold
\ben
\| \nabla_y \Phi (x,y) - \nabla_y \Phi (x,\tilde{y})\| &\leq& L_{yy} \|y-\tilde{y}\|,\label{lip_y}\\
\| \nabla_x \Phi (x,y) - \nabla_x \Phi (\tilde{x}, \tilde{y})\| &\leq& L_{xx} \| x-\tilde{x}\| + L_{xy} \|y-\tilde{y}\|.
\label{lip_x}
\een
\end{enumerate}
\end{asmp}

\begin{rem}
We emphasize that the local Lipschitz continuity in Assumption \ref{asmp-0} (ii) is only for
theoretical analysis purpose. Our main algorithm,  Algorithm \ref{algo1}, is parameter-free
in the sense that it
does not depend on the local Lipschitz constants $L_{yy}, L_{xx}$, or $L_{xy}$ in Assumption \ref{asmp-0} (ii), nor on any other parameters associated with the saddle point problem.
While the local Lipschitz constants $L_{yy}, L_{xx}$, and $L_{xy}$ should
rely on the bounded subsets $\cX$ and $\cY$, for simplifying the analysis and notation,
we omit its explicit dependence on particular bounded subsets $\cX$ and $\cY$.
Under the assumption of local Lipschitz continuity, we will prove that the sequence generated by our algorithm is bounded (see Lemma \ref{lem_bound} (ii)) and, in fact, converges (see Theorem \ref{thm12}).
Therefore, in the analysis, it is sufficient to choose $L_{yy}, L_{xx}$, and $L_{xy}$ to be sufficiently large, ensuring that Assumption \ref{asmp-0} (ii) holds for bounded sets $\cX$ and $\cY$, which guarantees that the entire sequences generated by the algorithm are located within $\cX \times \cY$.
\end{rem}

In addition, we make the following assumptions on $f$ and $g$, which are widely satisfied in
many practical applications, see, e.g., \cite[Chapter 6]{Beck2017book}.

\begin{asmp}\label{asmp-2}
  Assume that the proximal operators of the component functions $f$ and $g$ either have closed form formulas or can be evaluated efficiently.
\end{asmp}

The following simple facts and identities are useful in our analysis.
\begin{fact}\label{fact_proj}
Let $h: \R^m\rightarrow (-\infty, +\infty]$ be an extended real-valued closed proper and strongly convex function with modulus $\gamma \geq 0$. Then for any $\tau>0$ and $x\in \R^m$, it holds that $z = \prox_{\tau h}(x)$ if and only if
$h(y) \geq h(z)+ {1\over \tau}\langle x-z, y-z\rangle + {\gamma \over 2}\|y-z\|^2$ for all $y\in \R^m$.
\end{fact}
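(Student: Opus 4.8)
The plan is to recognize that, by definition, $z=\prox_{\tau h}(x)$ means exactly that $z$ is the (unique) minimizer over $\R^m$ of the function $\phi(y):=h(y)+\frac{1}{2\tau}\|y-x\|^2$, which is strongly convex with modulus $\gamma+1/\tau>0$ because $h$ is strongly convex with modulus $\gamma\geq 0$. The asserted equivalence is then nothing but the first-order optimality condition for this minimization, rewritten in terms of $h$ alone after absorbing the smooth quadratic. I would therefore establish the two implications separately, using only elementary convexity estimates together with the polarization identity
\begin{equation*}
\tfrac{1}{2\tau}\|y-x\|^2=\tfrac{1}{2\tau}\|z-x\|^2+\tfrac{1}{\tau}\langle z-x,\,y-z\rangle+\tfrac{1}{2\tau}\|y-z\|^2,
\end{equation*}
which holds for all $x,y,z\in\R^m$.

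For the sufficiency direction ($\Leftarrow$), I would assume the stated inequality and add $\frac{1}{2\tau}\|y-x\|^2$ to both of its sides. Substituting the polarization identity on the right, the two cross terms $\frac{1}{\tau}\langle x-z,y-z\rangle$ and $\frac{1}{\tau}\langle z-x,y-z\rangle$ cancel, leaving $\phi(y)\geq\phi(z)+\left(\frac{\gamma}{2}+\frac{1}{2\tau}\right)\|y-z\|^2\geq\phi(z)$ for every $y$. Hence $z$ minimizes $\phi$, i.e.\ $z=\prox_{\tau h}(x)$; the strictly positive coefficient $\frac{1}{2\tau}$ even forces uniqueness, consistent with the prox operator being well defined.

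For the necessity direction ($\Rightarrow$), I would start from $\phi(z)\leq\phi(y_t)$ with $y_t:=z+t(y-z)$ for $t\in(0,1]$, exploit strong convexity of $h$ in the form $h(y_t)\leq(1-t)h(z)+t\,h(y)-\frac{\gamma}{2}t(1-t)\|y-z\|^2$, and expand the quadratic term of $\phi(y_t)$ via the polarization identity applied to $y_t-x=(z-x)+t(y-z)$. After cancelling $\frac{1}{2\tau}\|z-x\|^2$ and dividing through by $t>0$, I would send $t\to 0^+$ to obtain precisely $h(y)\geq h(z)+\frac{1}{\tau}\langle x-z,y-z\rangle+\frac{\gamma}{2}\|y-z\|^2$. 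The only points needing care in this otherwise routine argument are to carry the modulus $\gamma\geq 0$ correctly through both directions, so that the proof degenerates gracefully to the plain convex case $\gamma=0$, and to justify the one-sided limit $t\to 0^+$ in the necessity step, which is legitimate because each term is affine or quadratic in $t$.
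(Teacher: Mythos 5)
Your proof is correct. The paper states this as a known fact without providing a proof, and your argument is the standard one: both directions via the expansion $\|y-x\|^2=\|z-x\|^2+2\langle z-x,y-z\rangle+\|y-z\|^2$, with sufficiency by direct addition of the quadratic and necessity by comparing $\phi(z)$ with $\phi(z+t(y-z))$, dividing by $t$, and letting $t\to 0^+$. The only cosmetic remark is that in the necessity direction you may restrict to $y\in\dom(h)$, since the claimed inequality is trivial when $h(y)=+\infty$; with that observation every quantity in your limit argument is finite and the computation goes through exactly as you describe.
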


\begin{fact}\label{fact_ab}
Let $\{u_n\}$ and $\{v_n\}$ be two real and nonnegative  sequences.
If, for some $\varepsilon \in (0,1)$,  $u_{n+1}\leq \varepsilon u_{n} +v_n$ for all $n\geq1$ and $\sum\nolimits_{n=1}^{\infty}v_n < \infty$, then $\sum\nolimits_{n=1}^{\infty} u_{n}  <\infty$.
\end{fact}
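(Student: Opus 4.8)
The plan is to sum the given recursive inequality and exploit the contraction factor $\varepsilon<1$ to bound the partial sums of $\{u_n\}$ uniformly in $N$. First I would introduce the partial sum $S_N := \sum_{n=1}^N u_n$ and add the inequality $u_{n+1}\leq \varepsilon u_n + v_n$ over $n=1,\ldots,N$. After a single index shift on the left-hand side, namely $\sum_{n=1}^N u_{n+1} = S_N + u_{N+1} - u_1$, this yields
\[
S_N + u_{N+1} - u_1 \leq \varepsilon S_N + \sum_{n=1}^N v_n.
\]

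Next, since every $u_n$ is nonnegative, I would drop the term $u_{N+1}\geq 0$ on the left and collect the $S_N$ contributions, obtaining
\[
(1-\varepsilon) S_N \leq u_1 + \sum_{n=1}^N v_n \leq u_1 + \sum_{n=1}^\infty v_n.
\]
Because $\varepsilon \in (0,1)$, the factor $1-\varepsilon$ is strictly positive, so dividing through gives a bound $S_N \leq (u_1 + \sum_{n=1}^\infty v_n)/(1-\varepsilon)$ that is independent of $N$. The right-hand side is finite by the hypothesis $\sum_{n=1}^\infty v_n<\infty$, which in particular forces each $v_n$, and hence by a trivial induction each $u_n$, to be finite.

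Finally, the sequence $\{S_N\}$ is nondecreasing (its increments $u_{N+1}$ are nonnegative) and bounded above, hence convergent; its limit is exactly $\sum_{n=1}^\infty u_n$, which is therefore finite. There is no genuinely hard step here, as the statement is an elementary summability estimate; the only points requiring a little care are the index shift converting $\sum_{n=1}^N u_{n+1}$ into $S_N$ plus boundary terms, and the observation that it is precisely the strict inequality $\varepsilon<1$ that lets the $S_N$ terms on both sides combine into a positive multiple of $S_N$. An alternative route would be to unroll the recursion into $u_{n+1}\leq \varepsilon^n u_1 + \sum_{k=1}^n \varepsilon^{n-k}v_k$ and then interchange the order of summation, bounding the geometric tail by $\sum_{m\geq 0}\varepsilon^m = 1/(1-\varepsilon)$; this reaches the same conclusion but requires justifying the double-sum interchange, so I would prefer the direct summation above.
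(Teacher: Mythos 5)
Your proof is correct. The paper states this result as a Fact without supplying any proof, so there is nothing to compare against; your direct summation argument --- summing the recursion over $n=1,\dots,N$, shifting the index, dropping the nonnegative boundary term $u_{N+1}$, and using $1-\varepsilon>0$ to get a uniform bound on the nondecreasing partial sums --- is exactly the standard elementary justification one would expect the authors to have in mind, and every step (including the finiteness of $S_N$ needed to move $\varepsilon S_N$ across the inequality) is sound.
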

\begin{fact}\label{fact_uv}
For any $u, v, a, b\in \bR$ such that $u + v > 0$, there holds $\frac{uv}{u+v}(a+b)^2\leq ua^2+v b^2$.
\end{fact}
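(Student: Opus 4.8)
The plan is to clear the denominator and reduce the assertion to a perfect-square inequality. Since $u+v>0$ by hypothesis, the stated inequality is well defined and is equivalent, upon multiplying both sides by the positive quantity $u+v$, to
\[
uv(a+b)^2 \leq (u+v)(ua^2+vb^2).
\]
First I would expand both sides. The left-hand side becomes $uva^2+2uvab+uvb^2$, while the right-hand side expands to $u^2a^2+uva^2+uvb^2+v^2b^2$. Subtracting the common terms $uva^2+uvb^2$ from each side, the inequality collapses to $2uvab\leq u^2a^2+v^2b^2$, which rearranges precisely to
\[
0\leq u^2a^2-2uvab+v^2b^2=(ua-vb)^2.
\]
This final inequality holds trivially for all real values, which closes the argument.

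I would stress that the hypothesis $u+v>0$ enters only once, in the very first step, where it is needed both to guarantee that the ratio $\frac{uv}{u+v}$ is well defined and to justify multiplying through by $u+v$ without reversing the inequality. Notably, no sign restriction on $u$ or $v$ individually is required: the reduction to the perfect square $(ua-vb)^2\ge 0$ is valid for arbitrary real $u$ and $v$, so the claim holds in full generality exactly as stated. There is essentially no substantive obstacle here; the only point meriting a moment of care is to verify that the residual quadratic is a genuine perfect square irrespective of the signs of $u$ and $v$, which it manifestly is.
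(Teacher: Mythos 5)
Your proof is correct: clearing the positive denominator $u+v$ and reducing to $(ua-vb)^2\geq 0$ is the standard argument, and the paper itself states this fact without proof, so there is nothing to compare against. Your remark that no sign condition on $u$ or $v$ individually is needed is also exactly the point the paper exploits later (e.g.\ in the proof of Lemma 3.2, where $v=1+\frac{1}{\psi}-\psi\delta_n$ may be negative).
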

%
%
%
For any $x, y, z\in \bR^m$ and $\alpha\in\bR$, there hold
\be
2\langle x-y, x-z\rangle&=&  \|x-y\|^2 +  \|x-z\|^2  -\|y-z\|^2,\label{id}\\
\|\alpha x+(1-\alpha)y\|^2&=& \alpha \|x\|^2+(1-\alpha)\|y\|^2-\alpha(1-\alpha)\|x-y\|^2.\label{id2}
\ee

\section{PDAc with Linesearch}
\label{sec-GRPDA-L}
In this section, we introduce our PDA with convex combination for solving \eqref{saddle_point}, where adaptive stepsizes is adopted by linesearch. Define
 \be
\theta_n:&=& \nabla_x\Phi(x_{n},y_{n})-\nabla_x\Phi(x_{n-1},y_{n-1}),\label{def:q-La}\\
\Phi_n^y:&=& \Phi(x_n,y_{n-1})+\langle \nabla_y \Phi (x_n, y_{n-1}),y_n-y_{n-1} \rangle-\Phi (x_n,y_{n}),\label{def:phi-y} \\
\omega &:=& \omega(\xi,\varphi) := 2\psi-\xi-\frac{\psi^3\varphi}{1+\psi}, \quad \forall \psi\in(1,1+\sqrt{3}), \label{def:zeta}\\
\Theta_\psi &:=& \{(\xi,\varphi)~|~
\xi>0,~~\varphi>1~~\mbox{and}~~\omega(\xi,\varphi)>0\}.\label{def:Theta}
\ee
It can be verified that with $\psi\in(1,1+\sqrt{3})$, the set $\Theta_\psi$ is nonempty.
Using the notation defined in \eqref{def:q-La}-\eqref{def:Theta}, the basic scheme is summarized in Algorithm \ref{algo1}.
\vskip5mm
\hrule\vskip2mm
\begin{algo}
[PDAc with Linesearch (PDAc-L)]\label{algo1}
{~}\vskip 1pt {\rm
\begin{description}
\item[{\em Step 0.}] Choose $\psi\in(1,1+\sqrt{3})$,  $(\xi,\varphi)\in \Theta_\psi$, $\tau_{\max}>0$,
$\nu\in(0, 1)$,  $\mu\in(0,1)$, $\eta\in [0,1)$ and integer $M \ge 1$.
Choose $x_0 \in \dom(g),$ $y_0\in \dom(f^*)$, $\beta>0$ and $\tau_0 \in (0, \tau_{\max}]$.
    Set $z_{0}=x_0$, $\omega:= \omega(\xi,\varphi)$, $\delta_0=1$ and $n=1$.
\item[{\em Step 1.}] Compute
\be\label{x_updating}
  z_{n}=\frac{\psi-1}{\psi} x_{n-1} + \frac{1}{\psi}z_{n-1},~~
 x_{n}=\prox_{\tau_{n-1} g}(z_{n}-\tau_{n-1} \nabla_x\Phi(x_{n-1},y_{n-1})).
 \ee
\item[{\em Step 2.}] Set $\tau = \min\{\varphi\tau_{n-1}, \tau_{\max}\}$ and compute
\be\label{y_updating}
y_{n}&=&\prox_{\beta\tau_{n} f^*}(y_{n-1}+\beta\tau_{n} \nabla_y\Phi(x_{n},y_{n-1})),
\ee
where $\tau_n = \tau \mu^i$ and $i$ is the smallest nonnegative integer such that
\be\label{linesearch-cond1}
\frac{\tau_n\tau_{n-1}}{\xi}\|\theta_n\|^2 +2\tau_n \Phi_n^y &\leq& \nu r_n + (1-\nu) c_n
\ee
with $r_n = \omega\delta_{n-1} \|x_{n}-x_{n-1}\|^2
+\frac{1}{\beta}\|y_n-y_{n-1}\|^2$, $c_n = (\eta/|\mathcal{I}_n|)
\sum_{i \in \mathcal{I}_n} r_i$ and
$$\mathcal{I}_n = \left\{n-1, n-2, \ldots, \max\{n-M, 1\} \right\}.$$

\item[{\em Step 3.}] Set $\delta_n= \tau_n/\tau_{n-1}$,  $n \leftarrow n+1$ and go to Step 1.
  \end{description}
}
\end{algo}
\vskip1mm\hrule\vskip5mm

\begin{rem}\label{rem_tau_max}
We give the following remarks on Algorithm \ref{algo1}.
\begin{enumerate}
  \item  The constant $\tau_{\max}$ in Algorithm \ref{algo1} is to ensure that $\{\tau_n: n\geq1\}$ is bounded, and thus it can be chosen to be a very large value in practice, see \cite[Algorithm 1]{Malitsky2019Golden} for similar remarks.

    \item It's worth emphasizing that in Algorithm \ref{algo1}, only the variable $y$ needs to be updated within each linesearch step. In contrast, both  $x$ and $y$ need to be updated within each linesearch step in \cite[Algorithm 2.3]{EYNS2021}. Note that in many applications the objective function can be written as $\Phi(x,y)=\langle M(x),y\rangle+h(x)$, where $M:\R^q\rightarrow \R^p$. In this case,  $\nabla_x\Phi(x,y)=M'(x)^\top y+\nabla h(x)$, and $\Phi_n^y \equiv 0$ for our algorithm. Therefore, our algorithm  has a very cheap cost in checking the condition \eqref{linesearch-cond1}.  In contrast, validating the linesearch condition in \cite[Algorithm 2.3]{EYNS2021} is much more expensive since it requires computing $\nabla_x\Phi$ at each intermediate trial point.

    \item For any $\psi\in(1,1+\sqrt{3})$,  $\xi$ and $\varphi$ can be flexibly selected in the region $\Theta_\psi$ defined in \eqref{def:Theta}. For instance,   $\xi$ and $\varphi$ can be chosen in $\{(\xi,\varphi)~|~\xi>0, \, \varphi>1, \, 20\xi+27\varphi<60\}$ for $\psi=3/2$. 
\end{enumerate}
\end{rem}


Without mentioning repeatedly, below we let $({x^{\star}},{y^{\star}})$ be an arbitrarily point in $\Omega$ (see definition in \eqref{def:Omega})  and  $\{(z_n,x_n,y_n)\}$ be the sequence generated by Algorithm \ref{algo1}.
Recall that ${\cal L}(\cdot)$ is defined in \eqref{saddle_point}.
For any  $(x,y)\in\dom(g)\times\dom(f^*)$, we further define the primal-dual gap function by
\be\label{def:J}
J(x,y) := \cL(x,y^\star)- \cL(x^\star, y),
\ee
which was used in \cite{MOS2020,ChY2020Golden,EYNS2021} to measure convergence rate.
Apparently, $J(x,y)$ is jointly convex in $(x,y)$ and $J(x,y)\geq 0$ for any $(x,y)$.
In the following, we  present some basic properties of the sequence  $\{(z_n,x_n,y_n)\}$ and   explore the properties of $\tau_n$, based upon which convergence of the algorithm will be established. 

\subsection{Basic Properties of PDAc-L}
For any bounded subset $\cY\subseteq\R^p$, $x \in \dom(g)$ and $y, \tilde{y} \in \cY \cap \dom(f^*)$,
it follows from Assumption \ref{asmp-0} that
\be\label{concavity-L}
-\frac{L_{yy}}{2}\| y - \tilde{y}\|^2  \leq \Phi (x,y) - \Phi (x,\tilde{y}) - \langle \nabla_y \Phi (x,\tilde{y}), y -\tilde{y}\rangle \leq0.
\ee
Next, we present two lemmas, which play critical roles in the convergence analysis.

\begin{lem}\label{lem1}
For  $\theta_n$, $\Phi_n^y$ and $J(\cdot)$  defined in  \eqref{def:q-La}, \eqref{def:phi-y} and \eqref{def:J}, respectively, there holds
\be
\tau_n J(x_n,y_n)
&\leq&\langle x_{n+1}-z_{n+1}, x^\star-x_{n+1}\rangle+ \frac{1}{\beta} \langle  y_{n}-y_{n-1}, y^\star-y_{n}\rangle \nonumber\\
&& + \psi\delta_{n} \langle x_{n}-z_{n+1}, x_{n+1}- x_n \rangle + \tau_n\langle \theta_n, x_{n}-x_{n+1}\rangle +\tau_n\Phi_n^y. \label{lem1:ineq}
\ee
\end{lem}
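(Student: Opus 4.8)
The plan is to expand the gap $J(x_n,y_n)$ via \eqref{def:J} into a \emph{primal group} and a \emph{dual group}, and to bound each one using the proximal characterization in Fact \ref{fact_proj} together with the convexity/concavity of $\Phi$ from Assumption \ref{asmp-0}. Concretely, I would write $J(x_n,y_n)=[g(x_n)-g(x^\star)]+[\Phi(x_n,y^\star)-\Phi(x^\star,y_n)]+[f^*(y_n)-f^*(y^\star)]$ and split the coupling term through $\Phi(x_n,y_n)$, regrouping as $\big([g(x_n)-g(x^\star)]+[\Phi(x_n,y_n)-\Phi(x^\star,y_n)]\big)+\big([\Phi(x_n,y^\star)-\Phi(x_n,y_n)]+[f^*(y_n)-f^*(y^\star)]\big)$.

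I expect the primal group to be the crux. The obstacle is a stepsize mismatch: $x_n$ is produced by the prox in \eqref{x_updating} with parameter $\tau_{n-1}$, whereas the target \eqref{lem1:ineq} carries the factor $\tau_n$ and the \emph{next} iterates $x_{n+1},z_{n+1}$. I would resolve this by inserting $x_{n+1}$ and using \emph{two} proximal relations. First, bound $g(x_{n+1})-g(x^\star)$ by applying Fact \ref{fact_proj} to the $x$-update \eqref{x_updating} written at index $n+1$ (test point $x^\star$); this yields precisely the clean term $\langle x_{n+1}-z_{n+1},x^\star-x_{n+1}\rangle$ plus $\tau_n\langle\nabla_x\Phi(x_n,y_n),x^\star-x_{n+1}\rangle$. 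Second, bound the correction $g(x_n)-g(x_{n+1})$ by the subgradient inequality from Fact \ref{fact_proj} at index $n$ (test point $x_{n+1}$), contributing $\tfrac1{\tau_{n-1}}\langle z_n-x_n,x_n-x_{n+1}\rangle-\langle\nabla_x\Phi(x_{n-1},y_{n-1}),x_n-x_{n+1}\rangle$. Third, bound $\Phi(x_n,y_n)-\Phi(x^\star,y_n)\le\langle\nabla_x\Phi(x_n,y_n),x_n-x^\star\rangle$ by convexity of $\Phi(\cdot,y_n)$. After multiplying by $\tau_n$, using $\delta_n=\tau_n/\tau_{n-1}$ and the convex-combination identity $z_n-x_n=\psi(z_{n+1}-x_n)$ implied by \eqref{x_updating}, the three gradient contributions collapse via $\langle\nabla_x\Phi(x_n,y_n),(x^\star-x_{n+1})+(x_n-x^\star)\rangle-\langle\nabla_x\Phi(x_{n-1},y_{n-1}),x_n-x_{n+1}\rangle=\langle\theta_n,x_n-x_{n+1}\rangle$ with $\theta_n$ as in \eqref{def:q-La}, while the proximal terms become $\langle x_{n+1}-z_{n+1},x^\star-x_{n+1}\rangle+\psi\delta_n\langle x_n-z_{n+1},x_{n+1}-x_n\rangle$. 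The key point is that bounding $g(x_n)-g(x^\star)$ directly by the index-$n$ prox is too loose: it leaves stale-gradient and $z_n$ terms that fail to telescope, so the passage through $x_{n+1}$ is essential.

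For the dual group I would use concavity of $\Phi(x_n,\cdot)$ split through $y_{n-1}$, namely $\Phi(x_n,y^\star)-\Phi(x_n,y_{n-1})\le\langle\nabla_y\Phi(x_n,y_{n-1}),y^\star-y_{n-1}\rangle$, together with the definition \eqref{def:phi-y} of $\Phi_n^y$, to obtain $\Phi(x_n,y^\star)-\Phi(x_n,y_n)\le\langle\nabla_y\Phi(x_n,y_{n-1}),y^\star-y_n\rangle+\Phi_n^y$. Then bound $f^*(y_n)-f^*(y^\star)$ by Fact \ref{fact_proj} applied to the $y$-update \eqref{y_updating} (test point $y^\star$, parameter $\beta\tau_n$), which gives $\tfrac1{\beta\tau_n}\langle y_n-y_{n-1},y^\star-y_n\rangle-\langle\nabla_y\Phi(x_n,y_{n-1}),y^\star-y_n\rangle$. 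The two $\nabla_y\Phi(x_n,y_{n-1})$ inner products cancel, and after multiplying by $\tau_n$ this group contributes exactly $\tfrac1\beta\langle y_n-y_{n-1},y^\star-y_n\rangle+\tau_n\Phi_n^y$. Summing the primal and dual groups yields \eqref{lem1:ineq}; the remaining work is purely the routine sign bookkeeping indicated above.
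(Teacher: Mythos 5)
Your proposal is correct and follows essentially the same route as the paper's proof: the same three proximal inequalities (the $x$-prox at indices $n+1$ and $n$ tested at $x^\star$ and $x_{n+1}$ respectively, and the $y$-prox tested at $y^\star$), the same rescaling of the index-$n$ inequality by $\delta_n=\tau_n/\tau_{n-1}$ combined with the identity $x_n-z_n=\psi(x_n-z_{n+1})$, and the same convexity/concavity bounds that collapse the gradient terms into $\langle\theta_n,x_n-x_{n+1}\rangle+\Phi_n^y$. The only difference is presentational — you bound the primal and dual groups separately before adding, whereas the paper sums all inequalities first and bounds a single residual $\cG_n$ — so nothing further is needed.
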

\begin{proof}
   It follows from  (\ref{x_updating}), (\ref{y_updating}) and Fact  \ref{fact_proj} that
\be
 \tau_{n}\big(g(x_{n+1})-g(x^\star)\big)
&\leq & \big\langle x_{n+1}-z_{n+1} + \tau_{n}\nabla_x\Phi(x_{n},y_{n}), ~ x^\star-x_{n+1}\big\rangle, \label{temp_01}\\
\tau_{n-1} \big(g(x_{n})-g(x_{n+1})\big) &\leq & \langle x_{n}-z_{n} + \tau_{n-1}\nabla_x\Phi(x_{n-1},y_{n-1}), ~ x_{n+1}-x_{n}\rangle, \label{tem1} \\
\tau_n \big(f^*(y_{n})-f^*(y^\star)\big)&\leq&\big\langle {1\over\beta} (y_{n}-y_{n-1}) - \tau_n \nabla_y\Phi(x_{n},y_{n-1}), ~y^\star-y_{n}\big\rangle. \label{temp_001}
\ee
Multiplying (\ref{tem1}) by $\delta_n = \tau_{n} /\tau_{n-1}$ and using
$x_{n}-z_{n} = \psi (x_{n}-z_{n+1})$, which follows from the definition of $z_n$ in \eqref{x_updating}, we obtain
\be\label{temp_02}
\tau_{n}\big(g(x_{n})-g(x_{n+1})\big)\leq\left\langle \psi\delta_n(x_{n}-z_{n+1}) + \tau_{n}\nabla_x\Phi(x_{n-1},y_{n-1}), ~x_{n+1}-x_{n}\right\rangle.
\ee
It follows from the right-hand-side of \eqref{concavity-L} that
\ben
-\langle \nabla_y \Phi (x_n, y_{n-1}), y^\star- y_n\rangle 
&=&\langle \nabla_y \Phi (x_n, y_{n-1}), y_n- y_{n-1}\rangle + \langle \nabla_y \Phi (x_n, y_{n-1}), y_{n-1}- y^\star\rangle\\
&\leq& \langle \nabla_y \Phi (x_n, y_{n-1}), y_n- y_{n-1}\rangle + \Phi (x_n,y_{n-1}) - \Phi (x_n,y^\star).
\een
Using the definition of $J(\cdot)$ in \eqref{def:J}, 
taking the sum of (\ref{temp_01}), (\ref{temp_001})-(\ref{temp_02}), and using the above inequality, we obtain from elementary calculations that
\be\label{temp_03}
\tau_{n}J(x_n, y_n) 
&\leq&\langle x_{n+1}-z_{n+1},~  x^\star-x_{n+1}\rangle +\frac{1}{\beta} \langle y_{n}-y_{n-1}, y^\star-y_{n}\rangle \nonumber \\
& & + \psi\delta_n\left\langle x_{n}-z_{n+1}, ~ x_{n+1}- x_{n}\right\rangle +\tau_{n}\cG_n,
\ee
where
%
\ben
\cG_n
&:=&\langle \nabla_x\Phi(x_{n},y_{n}), ~ x^\star-x_{n+1}\rangle  + \big\langle \nabla_x\Phi(x_{n-1},y_{n-1}), ~x_{n+1}-x_{n}\big\rangle\\
&& + \langle \nabla_y \Phi (x_n, y_{n-1}), y_n- y_{n-1}\rangle +\Phi (x_n,y_{n-1})-\Phi(x^\star,y_n).
\een
We can easily show, using 
\eqref{def:q-La}-\eqref{def:phi-y} and the convexity of $\Phi$ in $x$, that
\ben
%
%
%
\cG_n &=&\langle \theta_n, x_{n}-x_{n+1}\rangle+\Phi_n^y  + \big(\Phi (x_n,y_{n})+\langle\nabla_x\Phi(x_{n},y_{n}), ~ x^\star-x_{n}\rangle-\Phi (x^\star,y_{n}) \big) \\
%
&\leq &\langle \theta_n, x_{n}-x_{n+1}\rangle +\Phi_n^y.
\een
This together with \eqref{temp_03} implies \eqref{lem1:ineq} immediately.
\end{proof}

For any $(x^\star, y^\star)\in \Omega$,  we define
\begin{equation} \label{ab_n}
\left\{\ba{rcl}
a_n &:=&\frac{\psi}{\psi-1}\|z_{n+1}-x^\star\|^2+\frac{1}{\beta}\|y_{n-1}-y^\star\|^2 +\omega\delta_{n-1}\|x_n-x_{n-1}\|^2, \smallskip \\
b_n &:=&-\frac{\tau_n\tau_{n-1}}{\xi}\|\theta_n\|^2 +\frac{1}{\beta}\|y_n-y_{n-1}\|^2 +\omega\delta_{n-1}\|x_{n}-x_{n-1}\|^2 -2\tau_n \Phi_n^y.
 \ea
 \right.
 \end{equation}

\begin{lem}\label{lem2}
For all $n\geq 1$, there holds  $a_{n+1}+2 \tau_n J(x_n,y_n)\leq a_{n}-b_n$, where $a_n$ and $b_n$ are defined in \eqref{ab_n}.
\end{lem}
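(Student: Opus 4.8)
The plan is to start from the estimate in Lemma \ref{lem1}, multiply it by $2$, and convert every inner product on its right-hand side into squared norms so that the ``distance-to-solution'' quantities telescope into $a_n-a_{n+1}$ while the leftover negative terms assemble into $-b_n$. First I would apply the polarization identity \eqref{id} to the two inner products $\langle x_{n+1}-z_{n+1},x^\star-x_{n+1}\rangle$ and $\langle y_n-y_{n-1},y^\star-y_n\rangle$; these give $\|z_{n+1}-x^\star\|^2-\|x_{n+1}-x^\star\|^2-\|x_{n+1}-z_{n+1}\|^2$ and $\tfrac1\beta\big(\|y_{n-1}-y^\star\|^2-\|y_n-y^\star\|^2-\|y_n-y_{n-1}\|^2\big)$, respectively, matching the $y$-distance part of $a_n-a_{n+1}$ and the $-\tfrac1\beta\|y_n-y_{n-1}\|^2$ inside $-b_n$.

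The crucial link to $a_{n+1}$ is the convex-combination structure. Using $z_{n+2}=\tfrac{\psi-1}{\psi}x_{n+1}+\tfrac1\psi z_{n+1}$ together with identity \eqref{id2}, I would expand
\[
\tfrac{\psi}{\psi-1}\|z_{n+2}-x^\star\|^2=\|x_{n+1}-x^\star\|^2+\tfrac{1}{\psi-1}\|z_{n+1}-x^\star\|^2-\tfrac{1}{\psi}\|x_{n+1}-z_{n+1}\|^2,
\]
so that $\tfrac{\psi}{\psi-1}\big(\|z_{n+1}-x^\star\|^2-\|z_{n+2}-x^\star\|^2\big)$ collapses to $\|z_{n+1}-x^\star\|^2-\|x_{n+1}-x^\star\|^2+\tfrac{1}{\psi}\|x_{n+1}-z_{n+1}\|^2$. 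Substituting the definitions of $a_n,a_{n+1},b_n$ and cancelling the common $\omega\delta_{n-1}\|x_n-x_{n-1}\|^2$ and all $y$-terms, proving the lemma reduces to a single inequality in the two remaining inner products,
\[
2\psi\delta_n\langle x_n-z_{n+1},x_{n+1}-x_n\rangle+2\tau_n\langle\theta_n,x_n-x_{n+1}\rangle\le \tfrac{1+\psi}{\psi}\|x_{n+1}-z_{n+1}\|^2-\omega\delta_n\|x_{n+1}-x_n\|^2+\tfrac{\tau_n\tau_{n-1}}{\xi}\|\theta_n\|^2.
\]

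To close this, I would bound the $\theta_n$ term by Young's inequality with weight $\tau_{n-1}/\xi$, which reproduces $\tfrac{\tau_n\tau_{n-1}}{\xi}\|\theta_n\|^2$ exactly and leaves $\xi\delta_n\|x_{n+1}-x_n\|^2$, and apply \eqref{id} to $\langle x_n-z_{n+1},x_{n+1}-x_n\rangle$. Writing $u=x_{n+1}-z_{n+1}$ and $v=x_{n+1}-x_n$ (so that $x_n-z_{n+1}=u-v$) and inserting $\omega=2\psi-\xi-\tfrac{\psi^3\varphi}{1+\psi}$ from \eqref{def:zeta}, everything collapses to
\[
2\psi\delta_n\langle u,v\rangle\le \tfrac{1+\psi}{\psi}\|u\|^2+\tfrac{\psi^3\varphi}{1+\psi}\delta_n\|v\|^2.
\]
Young's inequality with weight $\tfrac{\psi}{1+\psi}$ bounds the left-hand side by $\tfrac{1+\psi}{\psi}\|u\|^2+\tfrac{\psi^3}{1+\psi}\delta_n^2\|v\|^2$, so the desired inequality follows once $\tfrac{\psi^3}{1+\psi}\delta_n^2\le \tfrac{\psi^3\varphi}{1+\psi}\delta_n$, i.e. once $\delta_n\le\varphi$.

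The main obstacle is exactly this final stepsize bound, and it is where the algorithm's design enters: the term $\tfrac{\psi^3\varphi}{1+\psi}$ inside $\omega$ appears to have been engineered precisely so that the plain condition $\delta_n\le\varphi$ suffices. Since Step 2 sets $\tau_n\le\tau=\min\{\varphi\tau_{n-1},\tau_{\max}\}$, one always has $\delta_n=\tau_n/\tau_{n-1}\le\varphi$, valid for every admissible $\psi\in(1,1+\sqrt3)$ and $(\xi,\varphi)\in\Theta_\psi$. I would emphasize that this argument uses only the convex-combination identities, polarization \eqref{id}--\eqref{id2}, and Young's inequality, and does not invoke the linesearch acceptance test \eqref{linesearch-cond1} (that criterion enters later, to guarantee $b_n\ge0$). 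The most error-prone part is the bookkeeping of which squared-norm terms cancel among $a_n$, $a_{n+1}$ and $b_n$, especially keeping the coefficient $\tfrac1\psi$ (rather than $\tfrac{1}{\psi-1}$) in the $z_{n+2}$ expansion correct.
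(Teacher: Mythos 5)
Your proof is correct and follows essentially the same route as the paper: polarize the first inner products via \eqref{id}, convert $\|x_{n+1}-x^\star\|^2$ into the $z_{n+2}$-terms of $a_{n+1}$ via \eqref{id2}, absorb the $\theta_n$ term by Young's inequality with weight $\tau_{n-1}/\xi$, and close with $\delta_n\le\varphi$ --- your direct Young's inequality on $2\psi\delta_n\langle u,v\rangle$ is algebraically equivalent to the paper's application of Fact \ref{fact_uv} with $u=\psi\delta_n$ and $v=1+\tfrac{1}{\psi}-\psi\delta_n$. One minor aside: the linesearch test \eqref{linesearch-cond1} guarantees only $b_n\ge(1-\nu)(r_n-c_n)$, hence $b_n\ge0$ only when $\eta=0$, but this does not affect the lemma itself.
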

\begin{proof}
Fix $n\geq 1$ arbitrarily.
By applying identity (\ref{id}) to the first three inner products in \eqref{lem1:ineq} and reorganizing the terms, we obtain
\be\label{ineq11}
&&\|x_{n+1}-x^\star\|^2+\frac{1}{\beta}\|y_{n}-y^\star\|^2+ 2 \tau_n J(x_n,y_n)\nonumber\\
&\leq&\|z_{n+1}-x^\star\|^2+\frac{1}{\beta}\|y_{n-1}-y^\star\|^2 + 2\tau_{n}\langle \theta_n, x_{n}-x_{n+1}\rangle +2\tau_n \Phi_n^y \\
&&-\psi\delta_n\|z_{n+1}-x_n\|^2-(1-\psi\delta_n)\|x_{n+1}-z_{n+1}\|^2
-\psi\delta_n\|x_{n+1}-x_n\|^2-\frac{1}{\beta}\|y_{n}-y_{n-1}\|^2.  \nonumber
\ee
Since $x_{n+1}=\frac{\psi}{\psi-1}z_{n+2}-\frac{1}{\psi-1}z_{n+1}$, which follows from \eqref{x_updating}, we deduce from
(\ref{id2}) that
\be\label{x-to-z}
\|x_{n+1}-x^\star\|^2
&=& {\psi\over \psi-1} \|z_{n+2}-x^\star\|^2- {1 \over \psi-1} \|z_{n+1}-x^\star\|^2 + {\psi \over (\psi-1)^2} \|z_{n+2}-z_{n+1}\|^2\nonumber\\
&=& {\psi\over \psi-1} \|z_{n+2}-x^\star\|^2- {1 \over \psi-1} \|z_{n+1}-x^\star\|^2+\frac{1}{\psi}\|x_{n+1}-z_{n+1}\|^2,
\ee
where the second equality is due to $z_{n+2}-z_{n+1} = {\psi-1\over\psi} (x_{n+1} - z_{n+1})$.
By plugging \eqref{x-to-z} into (\ref{ineq11}), we obtain
\be\label{ineq_rate1}
&&\frac{\psi}{\psi-1}\|z_{n+2}-x^\star\|^2+\frac{1}{\beta}\|y_{n}-y^\star\|^2+ 2 \tau_n J(x_n,y_n)\nonumber\\
&\leq&\frac{\psi}{\psi-1}\|z_{n+1}-x^\star\|^2+\frac{1}{\beta}\|y_{n-1}-y^\star\|^2 + 2\tau_{n}\langle \theta_n, x_{n}-x_{n+1}\rangle+2\tau_n\Phi_n^y -\frac{1}{\beta}\|y_n-y_{n-1}\|^2  \nonumber \\
&&-\psi\delta_n\|z_{n+1}-x_n\|^2-\big(1+\frac{1}{\psi}-\psi\delta_n\big)\|x_{n+1}-z_{n+1}\|^2
-\psi\delta_n\|x_{n+1}-x_n\|^2.
\ee
By using Fact \ref{fact_uv} with $u=\psi\delta_n>0$ and $v=1+\frac{1}{\psi}-\psi\delta_n$ so that $u+v = 1+\frac{1}{\psi} >0$,
$a=\|z_{n+1}-x_n\|$, $b=\|x_{n+1}-z_{n+1}\|$ and noting   $\|x_{n+1}-x_{n}\|\leq  a+b$, we can easily obtain
\ben
\psi\delta_n\|z_{n+1}-x_n\|^2+(1+\frac{1}{\psi}-\psi\delta_n) \|x_{n+1}-z_{n+1}\|^2
\geq\psi\delta_n(1-\frac{\psi^2\delta_n}{1+\psi}) \|x_{n+1}-x_n\|^2.
\een
This together with \eqref{ineq_rate1} gives
\be\label{ineq_rate2}
&&\frac{\psi}{\psi-1}\|z_{n+2}-x^\star\|^2+\frac{1}{\beta}\|y_{n}-y^\star\|^2+ 2 \tau_n J(x_n,y_n)\nonumber\\
&\leq&\frac{\psi}{\psi-1}\|z_{n+1}-x^\star\|^2+\frac{1}{\beta}\|y_{n-1}-y^\star\|^2 + 2\tau_{n}\langle \theta_n, x_{n}-x_{n+1}\rangle+2\tau_n\Phi_n^y  \nonumber \\
&&-\frac{1}{\beta}\|y_n-y_{n-1}\|^2 -\psi\delta_n(2-\frac{\psi^2\delta_n}{1+\psi})\|x_{n+1}-x_n\|^2.
\ee
Recall that $\delta_n = \tau_n/\tau_{n-1}$.
Plug the Cauchy-Schwartz inequality
$2\langle \theta_n, x_n-x_{n+1}\rangle \leq\frac{\xi}{\tau_{n-1}}\|x_n-x_{n+1}\|^2+ \frac{\tau_{n-1}}{\xi}\|\theta_n\|^2$, which holds for any $\xi>0$,
into \eqref{ineq_rate2} to  obtain
\be
&&\frac{\psi}{\psi-1}\|z_{n+2}-x^\star\|^2+\frac{1}{\beta}\|y_{n}-y^\star\|^2 +\delta_n(2\psi-\xi-\frac{\psi^3\delta_n}{1+\psi})\|x_{n+1}-x_n\|^2+ 2 \tau_n J(x_n,y_n)\nonumber\\
&\leq&\frac{\psi}{\psi-1}\|z_{n+1}-x^\star\|^2+\frac{1}{\beta}\|y_{n-1}-y^\star\|^2 + \frac{\tau_n\tau_{n-1}}{\xi}\|\theta_n\|^2+2\tau_n\Phi_n^y-\frac{1}{\beta}\|y_n-y_{n-1}\|^2. \label{yang-0821}
\ee
It follows from $\delta_n \leq \varphi$ (see Step 2 of Algorithm \ref{algo1}) and \eqref{def:zeta} that
$2\psi-\xi-\frac{\psi^3\delta_n}{1+\psi}\geq \omega$.
By reorganizing the terms in \eqref{yang-0821} and taking into account $a_n$ and $b_n$ defined in \eqref{ab_n}, we
obtain the desired result $a_{n+1}+2 \tau_n J(x_n,y_n)\leq a_{n}-b_n$.
\end{proof}

Based on Lemma~\ref{lem2}, we next establish some properties of the sequences generated by Algorithm \ref{algo1}.
Recall that  $\theta_n$ and $\Phi_n^y$  are defined in  \eqref{def:q-La} and \eqref{def:phi-y}, respectively.
Next, we show that the linesearch step in Step 2 of Algorithm \ref{algo1} is well-defined and establish some important properties
for the sequences $\{(x_n,y_n,z_n)\}$, $\{\tau_n\}$ and $\{\delta_n\}$, which are essential for proving the convergence results.
For convenience we define
\be\label{def:tau-underline}
\underline{\tau}:&=&\min\left\{\frac{\nu\xi}{\beta(2L_{xy}^2\tau_{\max}+ L_{yy}\xi)},~ \frac{\nu\omega\xi}{2L_{xx}^2\tau_{\max}}\right\}.
\ee

\begin{lem}\label{lem_bound}
The following claims hold. (i) The linesearch step of Algorithm \ref{algo1} always terminates,  i.e., $\{\tau_n\}$ is well defined;
(ii) The sequence $\{(x_n,y_n,z_n)\}$ generated by Algorithm \ref{algo1} is bounded; (iii) If $\tau_n \leq \underline{\tau}$, then the linesearch condition \eqref{linesearch-cond1} is satisfied;
and (iv) Assuming that $\tau_0\geq\mu \underline{\tau}$ and $\tau_{\max} \geq \mu\underline{\tau}$, then the sequences $\{\tau_n\}$ and $\{\delta_n\}$ generated by Algorithm \ref{algo1} are strictly separated from $0$. In fact, there hold $\tau_n\geq \mu \underline{\tau}$ and $\delta_n\geq \mu \underline{\tau}/\tau_{\max}$ for all $n\geq 1$.
\end{lem}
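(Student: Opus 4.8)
The plan is to establish the four claims in the logical order (iii)$\to$(i)$\to$(ii)$\to$(iv), since the conditional estimate (iii) is the engine that drives the rest. For (iii), I would first bound the two quantities on the left of \eqref{linesearch-cond1} by the local Lipschitz data of Assumption \ref{asmp-0}(ii). From the definition of $\theta_n$ together with the second Lipschitz estimate and $(a+b)^2\le 2a^2+2b^2$, one gets $\|\theta_n\|^2\le 2L_{xx}^2\|x_n-x_{n-1}\|^2+2L_{xy}^2\|y_n-y_{n-1}\|^2$, while the left inequality in \eqref{concavity-L} gives $0\le \Phi_n^y\le \tfrac{L_{yy}}{2}\|y_n-y_{n-1}\|^2$. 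Substituting these into the left side of \eqref{linesearch-cond1}, and using $\tau_{n-1},\tau_{n-2}\le\tau_{\max}$ together with $\delta_{n-1}=\tau_{n-1}/\tau_{n-2}$ (and $\delta_0=1$ for $n=1$), the coefficient of $\|x_n-x_{n-1}\|^2$ is dominated by $\nu\omega\delta_{n-1}$ once $\tau_n\le \nu\omega\xi/(2L_{xx}^2\tau_{\max})$, and the coefficient of $\|y_n-y_{n-1}\|^2$ is dominated by $\nu/\beta$ once $\tau_n\le \nu\xi/(\beta(2L_{xy}^2\tau_{\max}+L_{yy}\xi))$. The minimum of these two thresholds is exactly $\underline{\tau}$ in \eqref{def:tau-underline}, so for $\tau_n\le\underline{\tau}$ the left side is at most $\nu r_n\le \nu r_n+(1-\nu)c_n$ (as $c_n\ge0$, $\nu<1$), which is \eqref{linesearch-cond1}.

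Claim (i) then follows immediately: at step $n$ the points $x_n,x_{n-1},y_{n-1}$ are already fixed and every trial $y_n$ lies in a bounded set (by nonexpansiveness of the prox and $\tau_n\le\tau_{\max}$), so the relevant constants and hence $\underline{\tau}>0$ exist; since the trial stepsizes $\tau_n=\tau\mu^i\downarrow0$, eventually $\tau_n\le\underline{\tau}$ and (iii) forces acceptance. For claim (ii) I would invoke Lemma \ref{lem2}. Rewriting \eqref{linesearch-cond1} in terms of $b_n$ from \eqref{ab_n} gives $b_n=r_n-\big(\tfrac{\tau_n\tau_{n-1}}{\xi}\|\theta_n\|^2+2\tau_n\Phi_n^y\big)\ge(1-\nu)(r_n-c_n)$; combined with $J\ge0$ and the telescoping of $a_{n+1}\le a_n-b_n$, this yields $a_{N+1}\le a_1-(1-\nu)\sum_{n=1}^N(r_n-c_n)$. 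Because each $r_i$ enters at most $M$ of the windows $\mathcal{I}_n$ with weights $1/|\mathcal{I}_n|$ summing to at most one (up to an additive constant $C_0$ from the finitely many short initial windows), one has $\sum_{n=1}^N c_n\le \eta\sum_{n=1}^N r_n+C_0$, so $a_{N+1}\le a_1+(1-\nu)C_0-(1-\nu)(1-\eta)\sum_{n=1}^N r_n$. As $\eta<1$, $\{a_n\}$ is bounded, forcing $\{z_n\}$ and $\{y_n\}$ bounded; boundedness of $\{x_n\}$ follows from $x_n=\tfrac{\psi}{\psi-1}z_{n+1}-\tfrac{1}{\psi-1}z_n$.

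For claim (iv) I would induct on $n$ using the contrapositive of (iii): a rejected trial $\tau_n/\mu$ must satisfy $\tau_n/\mu>\underline{\tau}$. If the first trial $\tau=\min\{\varphi\tau_{n-1},\tau_{\max}\}$ is accepted, then by the inductive hypothesis $\tau_{n-1}\ge\mu\underline{\tau}$, $\varphi>1$, and $\tau_{\max}\ge\mu\underline{\tau}$ we get $\tau_n=\min\{\varphi\tau_{n-1},\tau_{\max}\}\ge\mu\underline{\tau}$; otherwise $\tau_n/\mu$ was rejected, so $\tau_n>\mu\underline{\tau}$. In both cases $\tau_n\ge\mu\underline{\tau}$, and since $\tau_{n-1}\le\tau_{\max}$ we obtain $\delta_n=\tau_n/\tau_{n-1}\ge\mu\underline{\tau}/\tau_{\max}$, which are the asserted lower bounds.

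The main obstacle is claim (ii), and specifically two points within it: controlling the averaged, nonmonotone right-hand side $c_n$ so that the telescoped sum remains bounded despite $c_n$ mixing several past residuals, and closing the self-consistency loop, whereby the boundedness just established legitimizes working with a single bounded set $\cX\times\cY$ on which the local constants $L_{xx},L_{xy},L_{yy}$ are valid and a uniform $\underline{\tau}>0$ can be used throughout (as anticipated in the remark following Assumption \ref{asmp-0}).
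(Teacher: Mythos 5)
Your proposal is correct and uses essentially the same ingredients as the paper: the same Lipschitz bounds on $\|\theta_n\|^2$ and $\Phi_n^y$ yielding exactly the two thresholds whose minimum is $\underline{\tau}$ in \eqref{def:tau-underline}, the same telescoping of $a_{n+1}\le a_n-b_n$ with the double-counting bound on $\sum_n c_n$ for boundedness, and the same induction via the contrapositive of (iii) for (iv). The only substantive difference is ordering: the ``self-consistency loop'' you flag as the main obstacle is resolved in the paper precisely by proving the claims in the order (i)$\to$(ii)$\to$(iii)$\to$(iv) --- part (i) needs only a per-iteration bounded set (the trial curve $y_n(\lambda)$ lies in a fixed ball around $y_{n-1}$), part (ii) is purely algebraic given that the accepted steps satisfy \eqref{linesearch-cond1} and never invokes Lipschitz constants, and only then does part (iii) use the now-established global boundedness to fix a single pair $\cX\times\cY$ on which $L_{xx},L_{xy},L_{yy}$, and hence a uniform $\underline{\tau}>0$, are valid; stating (iii) first, as you propose, leaves the constants in $\underline{\tau}$ without a well-defined domain.
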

\begin{proof}
(i) Fix $n$ arbitrarily  and recall that $\tau=\min\{\varphi\tau_{n-1}, \tau_{\max}\}$ at the $n$-th iteration.
For convenience, we define
\be\label{def:ytP}
\left.\ba{lll}y_{n}(\la) &:=& \prox_{\beta\la f^*}(y_{n-1}+\beta\la \nabla_y\Phi(x_{n},y_{n-1})) \text{~~for~~} \lambda > 0, \smallskip\\
\theta_n(\la):&=& \nabla_x\Phi(x_{n},y_{n}(\la))-\nabla_x\Phi(x_{n-1},y_{n-1}), \smallskip \\
\Phi_n^y(\la):&=& \Phi(x_n,y_{n-1})+\langle \nabla_y \Phi (x_n, y_{n-1}),y_n(\la)-y_{n-1} \rangle-\Phi (x_n,y_{n}(\la)).
\ea\right\}
\ee
It follows from \cite[Theorem 10.9]{Beck2017book}
that
$\|y_n(\la)-y_{n-1}\|\leq r:=\|y_n(\tau)-y_{n-1}\|<+\infty$
for any $\la\in(0,\tau]$. This implies that
the curve $\{ y_n(\la): \, \la\in(0,\tau] \}$ lies in the closed ball $B[y_{n-1}; r]$.
Assume, by contradiction, that the linesearch procedure defined in Algorithm~\ref{algo1} fails to terminate at the $n$-th iteration.
Then, for all  $i=0,1,2,\ldots$ and  $\la =\tau \mu^i$, we have
\be\label{linesearch-cond1-v}
\frac{\la \tau_{n-1}}{\xi}\|\theta_n(\la)\|^2 +2\la \Phi_n^y(\la) &>&
\nu r_n + (1-\nu) c_n \ge \nu r_n,
\ee
where $r_n$ and $c_n$ are given in  (\ref{linesearch-cond1}).
Since $y_n(\la)\in B[y_{n-1}; r]$ for all $\la=\tau \mu^i$ with $i=0,1,2,\ldots$, it follows from Assumption \ref{asmp-0} (ii) and the inequality on the left-hand-side of \eqref{concavity-L} that
\be\label{ineq_la}
\|\theta_{n}(\la)\|^2\leq2L_{xx}^2\|x_{n}-x_{n-1}\|^2+ 2L_{xy}^2\|y_{n}(\la)-y_{n-1}\|^2~~\mbox{and}~~\Phi_n^y(\la)\leq \frac{L_{yy}}{2}\|y_{n}(\la)-y_{n-1}\|^2.
\ee
Combining the above two inequalities with \eqref{linesearch-cond1-v} and  $\la=\tau \mu^i$, we obtain
\ben
&& {\tau\mu^i \over \xi} \Big( 2 \tau_{n-1}L_{xx}^2 \|x_{n}-x_{n-1}\|^2 +
(2 \tau_{n-1}L_{xy}^2+\xi L_{yy}) \|y_{n}(\la)-y_{n-1}\|^2\Big)\\
 &>& \nu r_n = \nu\Big(\omega\delta_{n-1} \|x_{n}-x_{n-1}\|^2
+\frac{1}{\beta}\|y_n(\la)-y_{n-1}\|^2\Big),
\een
which implies $2\mu^i \tau  \tau_{n-1}L_{xx}^2 / \xi >\nu \omega\delta_{n-1}$
or $\mu^i (2\tau  \tau_{n-1}L_{xy}^2+\xi \tau L_{yy})/\xi > \nu/\beta$.
This is impossible since $\mu^i\rightarrow 0$ as $i\rightarrow \infty$,  which indicates that the linesearch procedure must terminate.

(ii) By \eqref{linesearch-cond1}, which, from part (i) of this Lemma,
 is always satisfied, and the definition of $b_n$ in \eqref{ab_n}, for all $n \ge 1$ we have
\[
b_n \ge (1-\nu) (r_n - c_n) =
(1-\nu) \left(r_n - \frac{\eta}{|\mathcal{I}_n|} \sum_{i \in \mathcal{I}_n} r_i \right),
\]
where $\mathcal{I}_n = \left\{n-1, n-2, \ldots, \max\{n-M, 1\} \right\}$ and $M \ge 1$ is an integer.
Hence, for all $k \ge M+1$ we have
\begin{eqnarray} \label{sum_bn}
\sum_{n=M+1}^k b_n &\ge &  (1-\nu) \left( \sum_{n=M+1}^k r_n
- \frac{\eta}{M}\sum_{n=M+1}^k \sum_{i=n-M}^{n-1} r_i \right) \nonumber \\
&=& (1-\nu)(1-\eta) \sum_{n=M+1}^k r_n +(1-\nu) \eta
 \left( \sum_{n=M+1}^k r_n
- \frac{1}{M}\sum_{n=M+1}^k \sum_{i=n-M}^{n-1} r_i \right) \nonumber \\
&\ge& (1-\nu)(1-\eta) \sum_{n=M+1}^k r_n - (1-\nu) \eta  \sum_{n=1}^M r_n.
\end{eqnarray}
Since  $J(x_n,y_n)\geq 0$ (see the definition of $J(\cdot)$ in \eqref{def:J}),
it follows from Lemma \ref{lem2} that $a_{n+1}\leq a_n-b_n$ for all $n\geq 1$.
So, it follows from $r_n \ge 0$ for all $n$, $\nu \in (0,1), \eta \in [0,1)$ and \eqref{sum_bn}  that
\[
a_{k+1} \le a_{M+1} - \sum_{n=M+1}^k b_n \le a_{M+1} + (1-\nu) \eta  \sum_{n=1}^M r_n
\]
for all $k \ge M+1$. Hence, the sequence $\{a_n\}$ is bounded.
Using the definition of $a_n$ in \eqref{ab_n}, we have
\be
\frac{\psi}{\psi-1}\|z_{n+2}-x^\star\|^2+\frac{1}{\beta}\|y_{n}-y^\star\|^2 \leq a_{n+1}.
\ee
Hence, the boundedness of $\{a_n\}$ implies that both sequences $\{z_n\}$ and $\{y_n\}$ are bounded.
Since $x_n=\frac{\psi}{\psi-1}z_{n+1}-\frac{1}{\psi-1}z_{n}$, which follows from the first relation in \eqref{x_updating}, the sequence $\{(x_n,y_n,z_n)\}$ is bounded.

(iii) First, from part (ii) of this lemma, the sequence $\{(x_n,y_n)\}$ is bounded.
It then follows from Assumption \ref{asmp-0} (ii) and the inequality on the left-hand-side of
\eqref{concavity-L} that
\be\label{result1}
\|\theta_{n}\|^2\leq2L_{xx}^2\|x_{n}-x_{n-1}\|^2+ 2L_{xy}^2\|y_{n}-y_{n-1}\|^2~~\mbox{and}~~\Phi_n^y\leq \frac{L_{yy}}{2}\|y_{n}-y_{n-1}\|^2.
\ee
Using \eqref{result1}, we see that the linesearch condition \eqref{linesearch-cond1} is satisfied provided that
\be\label{condition-line}
\nu\omega\delta_{n-1}- 2\tau_{n}\tau_{n-1}L_{xx}^2/\xi\geq 0
~~\mbox{and}~~
\nu/\beta -  2\tau_n\tau_{n-1}L_{xy}^2 / \xi - \tau_n L_{yy}\geq 0.
\ee
Recall that $\delta_{n-1} = \tau_{n-1}/\tau_{n-2}$ and $\tau_{j} \leq \tau_{\max}$ for all $j\geq 0$.
It is easy to show from the definition of $\underline{\tau}$ in \eqref{def:tau-underline} that
the conditions in \eqref{condition-line} are indeed satisfied when $\tau_n \leq \underline{\tau}$.

(iv) Let $\underline{\tau} >0$ be defined in \eqref{def:tau-underline} and note
that $\tau_0\geq\mu \underline{\tau}$ and $\tau_{\max} \geq \mu\underline{\tau}$. 
Assume that $\tau_{n-1} \geq \mu \underline{\tau}$.
To show that the sequence $\{\tau_n\}$ is strictly separated from $0$, we
only need to show that $\tau_n \geq \mu \underline{\tau}$ as well.
Recall that $\tau =\min\{\varphi\tau_{n-1}, \tau_{\max}\}$.
Since $\varphi > 1$, we have $\tau \geq\min\{\tau_{n-1}, \tau_{\max}\} \geq \mu \underline{\tau}$.
Recall that $\tau_n=\tau \mu^i$ for some nonnegative integer $i$.
If $i=0$, then $\tau_n=\tau \geq \mu \underline{\tau}$.
If $i>0$, then $\hat{\tau}_n := \tau \mu^{i-1}$ must violate \eqref{linesearch-cond1}.
It then follows from part (iii) of this lemma that $\hat{\tau}_n>\underline{\tau}$ must hold. Hence, $\tau_n=\mu \hat{\tau}_n>\mu \underline{\tau}$.
As such, we have proved that $\tau_n \geq \mu \underline{\tau} > 0$ for all $n$.
Finally, it is obvious that $\delta_n= \tau_n/ \tau_{n-1} \geq \mu \underline{\tau} / \tau_{\max} > 0$ for all $n$.
This completes the proof of this lemma.
\end{proof}

\begin{rem}\label{rem_tau0}
(1.) By the definitions of $a_n$ and $b_n$ in \eqref{ab_n}, we have that $a_n$ is nonnegative.
However, the linesearch rule (\ref{linesearch-cond1}) in Algorithm \ref{algo1} not necessarilly ensures the nonnegativity of $b_n$
unless setting the parameter $\eta =0$ and therefore, it does not ensure the sequence $\{a_n\}$ is
the monotonely descreasing.  Hence, we may regard (\ref{linesearch-cond1}) as a ``nonmonotone" line search procedure,
which is often used in nonlinear optimization algorithms to improve both robustness and efficiency \cite{HZ,GLL}.
(2.) The conditions $\tau_0\geq \mu \underline{\tau}$ and $\tau_{\max}\geq \mu \underline{\tau}$ in Lemma \ref{lem_bound} (iv) can be easily ensured.
 First, choose $y_{-1}\in \dom(f^*)$ arbitrarily in a small neighborhood of the starting point $y_0$ such that $\nabla_x \Phi (x_0,y_{-1})\neq \nabla_x\Phi (x_0,y_0)$ and then compute
$\varpi = \|y_{-1}-y_0\|^2/\|\nabla_x \Phi (x_0,y_{-1})-\nabla_x\Phi (x_0,y_0)\|^2 \geq 1/L_{xy}^2$.
Set $\tau_0=  \mu \xi\varpi / (2\beta)$ and $\tau_{\max}=\max(\chi, \tau_0)$ with $\chi>1$. Then, we have $\tau_{\max}\geq \tau_0\geq \frac{\mu\nu\xi}{2\beta L_{xy}^2} \geq \frac{\mu\nu\xi}{2\beta L_{xy}^2 \tau_{\max}}\geq \mu \underline{\tau}$ due to the definition of $\underline{\tau}$ in \eqref{def:tau-underline}, $\nu\in(0,1)$ and $\tau_{\max}>1$.
Hence, without loss of generality, in the following we assume $\tau_0$ and $\tau_{\max}$ are chosen such that $\tau_0\geq \mu \underline{\tau}$ and $\tau_{\max}\geq \mu \underline{\tau}$.
\end{rem}

\subsection{Convergence}

Based on Lemma \ref{lem_bound}, we now establish global pointwise convergence and $\cO(1/N)$ ergodic sublinear convergence rate of Algorithm \ref{algo1}.

\begin{thm}[Global pointwise convergence]\label{thm12}
The sequence $\{(x_{n}, y_n)\}$ generated by Algorithm \ref{algo1} converges to a solution of the saddle point problem (\ref{saddle_point}).
\end{thm}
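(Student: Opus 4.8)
The plan is to combine an asymptotic-regularity estimate, a quasi-Fej\'er monotonicity property, and Opial's lemma adapted to the weighted norm induced by $a_n$. First I would extract summability from the nonmonotone linesearch. Since $J(x_n,y_n)\ge 0$, Lemma \ref{lem2} gives $a_{n+1}\le a_n-b_n$, and combining the linesearch rule \eqref{linesearch-cond1} with the lower bound on $\sum_{n=M+1}^k b_n$ already derived in the proof of Lemma \ref{lem_bound}(ii) yields, after using $a_{k+1}\ge 0$,
\[
(1-\nu)(1-\eta)\sum_{n=M+1}^k r_n\le a_{M+1}+(1-\nu)\eta\sum_{n=1}^M r_n
\]
uniformly in $k$. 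Because $\nu\in(0,1)$ and $\eta\in[0,1)$, this forces $\sum_n r_n<\infty$; since $\delta_{n-1}\ge\mu\underline{\tau}/\tau_{\max}>0$ by Lemma \ref{lem_bound}(iv) and $\beta>0$, the definition of $r_n$ gives $\|x_n-x_{n-1}\|\to 0$ and $\|y_n-y_{n-1}\|\to 0$. I would then upgrade this to $\|z_n-x_n\|\to 0$: writing $d_n:=z_n-x_{n-1}$, the recursion $z_{n+1}=\frac{\psi-1}{\psi}x_n+\frac1\psi z_n$ from \eqref{x_updating} gives $d_{n+1}=\frac1\psi d_n-\frac1\psi(x_n-x_{n-1})$, a contraction ($1/\psi<1$) driven by a vanishing perturbation, so $d_n\to 0$ and hence $z_n-x_n=d_n+(x_{n-1}-x_n)\to 0$.

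Next I would establish that $\lim_n a_n$ exists for every fixed $(x^\star,y^\star)\in\Omega$. From $a_{n+1}\le a_n-b_n$ and $b_n\ge(1-\nu)(r_n-c_n)\ge-(1-\nu)c_n$ one gets $a_{n+1}\le a_n+(1-\nu)c_n$; since each $r_i$ enters at most $M$ of the averages $c_n$, we have $\sum_n c_n\le M\sum_i r_i<\infty$, so $\{a_n\}$ is quasi-Fej\'er and therefore convergent (the tail-corrected sequence $a_n+(1-\nu)\sum_{k\ge n}c_k$ is nonincreasing and bounded below by $0$). Because $\omega\delta_{n-1}\|x_n-x_{n-1}\|^2\to 0$, it follows that $\lim_n\big(\tfrac{\psi}{\psi-1}\|z_{n+1}-x^\star\|^2+\tfrac1\beta\|y_{n-1}-y^\star\|^2\big)$ exists for every $(x^\star,y^\star)\in\Omega$, which is exactly condition (i) of Opial's lemma in the weighted norm $\|(u,v)\|_\ast^2:=\tfrac{\psi}{\psi-1}\|u\|^2+\tfrac1\beta\|v\|^2$.

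Then I would identify the subsequential limits. By Lemma \ref{lem_bound}(ii) the sequence is bounded, so some subsequence satisfies $(x_{n_k},y_{n_k})\to(\hat x,\hat y)$, and by asymptotic regularity $x_{n_k-1},z_{n_k}\to\hat x$ and $y_{n_k-1}\to\hat y$. Applying Fact \ref{fact_proj} to the updates \eqref{x_updating} and \eqref{y_updating} gives, for all $x$,
\[
g(x)\ge g(x_n)+\tfrac{1}{\tau_{n-1}}\langle z_n-x_n,\,x-x_n\rangle-\langle\nabla_x\Phi(x_{n-1},y_{n-1}),\,x-x_n\rangle
\]
together with the analogous inequality for $f^\ast$. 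Passing to the limit along $n_k$, using $\|z_n-x_n\|\to 0$, $\|y_n-y_{n-1}\|\to 0$, the lower bound $\tau_n\ge\mu\underline{\tau}$, lower semicontinuity of $g$ and $f^\ast$, and continuity of $\nabla_x\Phi,\nabla_y\Phi$ on bounded sets (Assumption \ref{asmp-0}), yields $-\nabla_x\Phi(\hat x,\hat y)\in\partial g(\hat x)$ and $\nabla_y\Phi(\hat x,\hat y)\in\partial f^\ast(\hat y)$, i.e. $(\hat x,\hat y)\in\Omega$ by \eqref{def:Omega}. Finally, Opial's lemma applied to $w_n:=(z_{n+1},y_{n-1})$ with $S=\Omega$ in the norm $\|\cdot\|_\ast$ forces the whole sequence $\{w_n\}$ to converge to a single $(\hat x,\hat y)\in\Omega$; since $z_n-x_n\to 0$ and $y_n-y_{n-1}\to 0$, this transfers to $(x_n,y_n)\to(\hat x,\hat y)$. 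The main obstacle is the nonmonotonicity introduced by the parameter $\eta$: because $b_n$ may be negative, $\{a_n\}$ is not Fej\'er monotone, so the crux of the argument is to extract both $\sum_n r_n<\infty$ and the summable perturbation $\sum_n c_n<\infty$, which together recover asymptotic regularity and the existence of $\lim_n a_n$ required to invoke Opial's lemma.
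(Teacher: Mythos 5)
Your proposal is correct, and its skeleton coincides with the paper's: extract $\sum_n r_n<\infty$ from the nonmonotone linesearch via the block inequality \eqref{sum_bn}, deduce asymptotic regularity ($\|x_n-x_{n-1}\|\to0$, $\|y_n-y_{n-1}\|\to0$, $z_n-x_n\to0$), pass to the limit in the prox inequalities to show every cluster point lies in $\Omega$, and then upgrade subsequential to full convergence. The two places where you genuinely deviate are both fine and worth noting. First, for $z_n-x_n\to 0$ you use a contraction-plus-vanishing-perturbation argument on $d_n=z_n-x_{n-1}$, whereas the paper derives the stronger statement $\sum_n\|z_n-x_n\|^2<\infty$ from the recursion $\|u_{n+1}\|^2\le\frac1\psi\|u_n\|^2+\frac{\psi}{\psi-1}\|x_n-x_{n+1}\|^2$ and Fact \ref{fact_ab}; your weaker conclusion suffices. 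Second, and more substantively, you handle the nonmonotonicity of $\{a_n\}$ by the quasi-Fej\'er bound $a_{n+1}\le a_n+(1-\nu)c_n$ with $\sum_n c_n\le \eta M\sum_i r_i<\infty$, which gives existence of $\lim_n a_n$ for \emph{every} anchor $(x^\star,y^\star)\in\Omega$ and lets you close with Opial's lemma in the weighted norm $\|\cdot\|_\ast$. The paper instead re-anchors $a_n$ at the specific cluster point $(x^*,y^*)$, observes $a_{n_k}\to0$, and propagates this to the whole tail via $0\le a_{\ell+1}\le a_{n_k}+(1-\nu)\eta\sum_{i=n_k-M}^{n_k-1}r_i$, thereby reproving the relevant piece of Opial's argument by hand. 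Your route is more modular and makes the quasi-Fej\'er structure of the iteration explicit; the paper's is self-contained and avoids citing Opial. Both are complete; just make sure, when invoking Opial, to state that the cluster points of $w_n=(z_{n+1},y_{n-1})$ coincide with those of $(x_n,y_n)$ (which follows from $z_{n+1}-x_n=\frac1\psi(z_n-x_n)\to0$ and $y_n-y_{n-1}\to0$), so that the hypothesis ``all cluster points lie in $\Omega$'' indeed applies to the sequence you run Opial on.
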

 \begin{proof}
Again, it follows from  $J(x_n,y_n)\geq 0$ and
Lemma \ref{lem2} that $a_{n+1}\leq a_n-b_n$ for all $n\geq 1$.
So, by (\ref{sum_bn}) and $a_n \ge 0$ for all $n$, we have
\be\label{sum_rn}
(1-\nu)(1-\eta) \sum_{n=M+1}^{\infty} r_n \le a_{M+1}  + (1-\nu) \eta  \sum_{n=1}^M r_n.
\ee
From the proof of Lemma \ref{lem_bound} (iv), we have $\delta_n\geq\underline{\delta} := \mu \underline{\tau} / \tau_{\max}  > 0$ for all $n\geq 1$.
Then, by the definition of $r_n$ in (\ref{linesearch-cond1}), we have
$r_n = \omega\delta_{n-1}\|x_{n}-x_{n-1}\|^2+\frac{1}{\beta} \|y_n-y_{n-1}\|^2
\ge \omega \underline{\delta} \|x_{n}-x_{n-1}\|^2+\frac{1}{\beta} \|y_n-y_{n-1}\|^2$.
Hence, we have from $\nu \in (0,1), \eta \in [0,1)$ and (\ref{sum_rn}) that
\[
\sum_{n=1}^\infty r_n < \infty, \quad
\sum_{n=1}^\infty \|x_{n}-x_{n-1}\|^2 < \infty \quad \mbox{and} \quad
 \sum_{n=1}^\infty \|y_{n}-y_{{n}-1}\|^2 < \infty,
\]
which also implies
$\lim\limits_{n\rightarrow\infty} r_n =\lim\limits_{n\rightarrow\infty}\|x_{n}-x_{n-1}\| = \lim\limits_{n\rightarrow\infty}\|y_{n}-y_{n-1}\| =0$.
%
%
Let $u_n :=z_n-x_n$.
It is easy to verify from $z_{n+1}=(1-1/\psi)x_n+ z_n/\psi$ that $\psi u_{n+1}- u_{n}= \psi(x_n-x_{n+1})$.
Dividing both sides of this equality by $\psi-1$ and using   \eqref{id2} gives
\ben
\frac{\psi}{\psi-1}\|u_{n+1}\|^2-\frac{1}{\psi-1}\|u_{n}\|^2+ \frac{\psi}{(\psi-1)^2}\|u_{n+1}-u_n\|^2=\frac{\psi^2}{(\psi-1)^2}\|x_n-x_{n+1}\|^2.
\een
This implies that $\|u_{n+1}\|^2\leq \frac{1}{\psi}\|u_{n}\|^2+\frac{\psi}{\psi-1}\|x_n-x_{n+1}\|^2$. It then follows from
$\sum_{n=1}^{\infty}\|x_{n}-x_{n+1}\|^2<\infty$  and Fact \ref{fact_ab} (ii)  that $\sum_{n=1}^{\infty}\|u_{n}\|^2<\infty$, and thus $\lim_{n\rightarrow\infty} u_n = \lim_{n\rightarrow\infty} (z_n-x_n) = 0$.
Since the sequence $\{(x_n,y_n, z_{n})\}$ is bounded from Lemma \ref{lem_bound} (ii),
there exist $(x^*,y^*)$ and a subsequence of $\{n_k: k\geq 1\}\subseteq\{n: n\geq 1\}$   such that
$\lim\limits_{k\rightarrow\infty} x_{n_k} = x^*$ and $\lim\limits_{k\rightarrow\infty} y_{n_k} = y^*$,
which implies that $\lim_{k \to \infty} x_{n_k + 1} = \lim_{k \to \infty} x_{n_k} = \lim_{k \to \infty} z_{n_k} = x^*$
and $\lim_{k \to \infty} y_{n_k} = \lim_{k \to \infty} y_{n_k-1} = y^*$.
Since $z_{n+1}=(1-1/\psi)x_n+ z_n/\psi$ for all $n\geq 1$, we have $\lim_{k \to \infty} z_{n_k+1} = x^*$ as well.

Similar to \eqref{temp_01} and \eqref{temp_001}, for any $(x,y)\in \dom(g)\times \dom(f^*)$, there hold
\begin{equation} \label{sub-optmal}
\left\{\ba{l}
\tau_{n_k} \big(g(x_{n_k+1})-g(x)\big)
\leq
\langle x_{n_k+1}-z_{n_k+1} + \tau_{n_k} \nabla_x\Phi( x_{n_k},y_{{n_k}}), ~ x-x_{n_k+1}\rangle, \smallskip \\
\tau_{n_k} \big(f^*(y_{n_k})-f^*(y)\big)
\leq
\big\langle {1\over\beta} (y_{n_k}-y_{n_k-1}) - \tau_{n_k} \nabla_y\Phi( x_{n_k},y_{{n_k}-1}) , ~y-y_{n_k}\big\rangle.
\ea
\right.
\end{equation}
Then, dividing $\tau_{n_k}$ from both sides of \eqref{sub-optmal},
taking into account  that both $g$ and $f$ are closed (and thus lower semicontinuous) and
letting $k\rightarrow\infty$, we obtain
\begin{equation} \label{thm12-1}
g(x^*)-g(x) \leq
\langle  \nabla_x\Phi( x^*,y^*), ~ x-x^*\rangle
\text{~~and~~}
f^*(y^*)-f^*(y) \leq
- \langle  \nabla_y\Phi( x^*,y^*), ~y-y^*\rangle.
\end{equation}
Since \eqref{thm12-1} holds for any $(x,y)\in \dom(g)\times \dom(f^*)$, we have $-\nabla_x\Phi( x^*,y^*)\in \partial g(x^*)$ and $\nabla_y\Phi( x^*,y^*) \in \partial f^*(y^*)$, which implies that $(x^*,y^*)$ is a solution of the saddle point problem \eqref{saddle_point}.

Recall that $J(\cdot)$ and $a_n$ are defined in \eqref{def:J} and \eqref{ab_n}, respectively, which depend on an arbitrarily fixed solution pair $({x^{\star}},{y^{\star}})$.
Since $(x^*,y^*)$ is also solution of the saddle point problem \eqref{saddle_point}, we can replace $({x^{\star}},{y^{\star}})$ by $(x^*,y^*)$ in the first place. As such, there holds
$\lim_{k\rightarrow\infty} a_{n_k} = 0$ since
$\lim_{k\rightarrow\infty} z_{n_k+1} = x^*$ and
$\lim_{k\rightarrow\infty} y_{n_k-1} = \lim_{k\rightarrow\infty} y_{n_k} = y^*$.
Exactly same as (\ref{sum_bn}), for all $\ell \ge n_k \ge M+1$ we can obtain
\[
\sum_{i=n_k}^\ell b_i \ge (1-\nu)(1-\eta) \sum_{i=n_k}^k r_i - (1-\nu) \eta \sum_{i=n_k-M}^{n_k-1} r_i.
\]
Hence, we can derive from $0 \le a_{n+1} \le a_n - b_n$ and $r_n \ge 0$ for all $n \ge 1$ that
\[
0 \le a_{\ell+1} \le a_{n_k} + (1-\nu) \eta \sum_{i=n_k-M}^{n_k-1} r_i, \quad \forall \ell \ge n_k \ge M+1.
\]
Then, we have from $\lim_{k\rightarrow\infty} a_{n_k} = 0$
and $\lim_{n\rightarrow\infty} r_n = 0$ that
$\lim_{n\rightarrow\infty} a_n = 0$ as well.
As a result, due to the definition of $a_n$ in \eqref{ab_n}, there holds $\lim_{n\rightarrow\infty}(z_{n},y_{n}) = (x^*,y^*)$.
Again, by the first relation in \eqref{x_updating}, we have $\lim_{n\rightarrow\infty}x_n = x^*$.
This completes the proof.
\end{proof}

We next establish ergodic sublinear convergence rate of Algorithm \ref{algo1} using the primal-dual gap function $J(\cdot)$ defined in \eqref{def:J}.

\begin{thm}[Sublinear convergence rate]\label{thm22}
There exists a constant $C_1> 0$ such that for any $N\geq 1$ there holds
$J({\hat x}_N,{\hat y}_N) \leq C_1/N$,
%
where ${\hat x}_N$ and ${\hat y}_N$ are defined as
\ben
{\hat x}_N =\frac{1}{s_N}\sum_{n=1}^N\tau_n x_n
~~\mbox{and}~~
{\hat y}_N=\frac{1}{s_N} \sum_{n=1}^N\tau_n y_n
~~\mbox{with}~~
s_N=\sum_{n=1}^N\tau_n.
\een
\end{thm}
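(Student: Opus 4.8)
The plan is to derive the ergodic rate directly from the one-step inequality in Lemma~\ref{lem2}, namely $a_{n+1}+2\tau_n J(x_n,y_n)\leq a_n-b_n$, by summing it over $n$ and exploiting the convexity of $J(\cdot)$ together with the fact that the stepsizes $\tau_n$ are bounded away from zero (Lemma~\ref{lem_bound}~(iv)). First I would telescope: summing $2\tau_n J(x_n,y_n)\leq a_n-a_{n+1}-b_n$ from $n=1$ to $N$ gives
\be
2\sum_{n=1}^N \tau_n J(x_n,y_n) \leq a_1 - a_{N+1} - \sum_{n=1}^N b_n \leq a_1 - \sum_{n=1}^N b_n,\nonumber
\ee
using $a_{N+1}\geq 0$. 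The term $\sum_{n=1}^N b_n$ is not necessarily nonnegative because of the nonmonotone linesearch (see Remark~\ref{rem_tau0}), so the next step is to lower-bound it exactly as in the derivation of \eqref{sum_bn}: splitting the sum that appears in $c_n$ and reindexing yields $\sum_{n=M+1}^N b_n \geq -(1-\nu)\eta\sum_{n=1}^M r_n$, and absorbing the finitely many initial terms $b_1,\ldots,b_M$ (each bounded since the whole sequence is bounded by Lemma~\ref{lem_bound}~(ii)) produces a finite constant. Hence $\sum_{n=1}^N b_n \geq -B$ for a constant $B$ independent of $N$, so $2\sum_{n=1}^N \tau_n J(x_n,y_n)\leq a_1+B=:2C_1'$.

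Next I would convert the weighted sum of gap values into a single evaluation at the ergodic average. Since $J(\cdot)$ is jointly convex and nonnegative (noted right after \eqref{def:J}), and $\hat x_N,\hat y_N$ are the $\tau_n$-weighted averages with total weight $s_N=\sum_{n=1}^N\tau_n$, Jensen's inequality gives
\be
s_N\, J(\hat x_N,\hat y_N) \leq \sum_{n=1}^N \tau_n J(x_n,y_n) \leq C_1'.\nonumber
\ee
Finally I would invoke the lower bound $\tau_n\geq \mu\underline{\tau}>0$ from Lemma~\ref{lem_bound}~(iv), which gives $s_N\geq N\mu\underline{\tau}$, so that $J(\hat x_N,\hat y_N)\leq C_1'/s_N \leq C_1'/(N\mu\underline{\tau})$. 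Setting $C_1 := C_1'/(\mu\underline{\tau})$ yields the claimed bound $J(\hat x_N,\hat y_N)\leq C_1/N$.

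The main obstacle is the careful bookkeeping in lower-bounding $\sum_{n=1}^N b_n$ uniformly in $N$: one must handle the nonmonotone averaging term $c_n$ and the boundary indices in $\mathcal{I}_n$ so that the negative contribution is a fixed finite constant rather than something growing with $N$. This is precisely the computation already carried out in \eqref{sum_bn}, so I would reuse it, and the only additional care needed is bounding the initial block $b_1,\ldots,b_M$ using boundedness of $\{(x_n,y_n,z_n)\}$ together with $\tau_n\leq\tau_{\max}$ and the local Lipschitz estimates \eqref{result1} to control the $\|\theta_n\|^2$ and $\Phi_n^y$ terms in $b_n$. Everything else — the telescoping and the Jensen step — is routine given the convexity of $J$ and the uniform positive lower bound on $\tau_n$.
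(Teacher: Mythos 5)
Your argument is correct and follows essentially the same route as the paper's own proof: telescoping the one-step inequality of Lemma~\ref{lem2}, lower-bounding $\sum_n b_n$ via the computation in \eqref{sum_bn} plus a finite constant for the initial block $b_1,\ldots,b_M$, applying Jensen's inequality to the jointly convex gap $J$, and concluding with $s_N\geq \mu\underline{\tau}N$ from Lemma~\ref{lem_bound}~(iv). The only cosmetic difference is that the paper writes the initial-block constant explicitly as $\tilde{C}=\sum_{n=1}^M|b_n|+(1-\nu)\eta\sum_{n=1}^M r_n$, whereas you absorb it into an unnamed finite constant.
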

\begin{proof}
Recall from Lemma \ref{lem2} that
$2 \tau_n J(x_{n},y_n) \leq a_{n} - a_{n+1} - b_n$ for all $n\geq 1$,
a sum of which over $n = 1, \ldots, N$ yields
$ 2\sum\nolimits_{n=1}^N  \tau_n   J(x_{n},y_n)  \leq a_1 - a_{N+1}
- \sum_{n=1}^N b_n$. Then, it follows from (\ref{sum_bn}) that
$ 2\sum\nolimits_{n=1}^N  \tau_n   J(x_{n},y_n)  \leq a_1 + \tilde{C} $,
where $ \tilde{C} = \sum_{n=1}^M |b_n|
+ (1-\nu) \eta \sum_{n=1}^M r_n$.
Since $J(x, y)$ is jointly convex in $(x,y)$, it follows from the definitions of ${\hat x}_N$, ${\hat y}_N$ and Jensen's inequality that
$J({\hat x}_N, {\hat y}_N) \leq {1 \over s_N} \sum_{n=1}^N \tau_n  J(x_{n}, y_n))$.
%
Combining 
the two inequalities just derived, we obtain
$J({\hat x}_N, {\hat y}_N)\leq (a_1+\tilde{C}) / (2s_N)$.
%
%
By  Lemma~\ref{lem_bound} (iv), it holds $\tau_n\geq\mu\underline{\tau}>0$ and then $s_N = \sum_{n=1}^N\tau_n \geq \mu\underline{\tau} N$. By defining $C_1= (a_1+\tilde{C})/(2\mu\underline{\tau})> 0$, the proof is completed.
\end{proof}


Even though the primal-dual gap function $J(\cdot)$ is frequently adopted in the literature to measure sublinear convergence rate, see, e.g., \cite{Chambolle2011A,Chambolle2016ergodic,EYNS2021}, it has a flaw that it could vanish at nonstationary points.
Next, we consider a special yet important case, where the coupling term is linear in one of the variables.
We reformulate the problem as a constrained optimization problem and establish convergence rate results for Algorithm \ref{algo1} in terms of the conventional measure of function value residual and constraint violation of the reformulated problem. Moreover, when $g$ is strongly convex, we propose an accelerated algorithm and establish the faster $\cO (1/N^2)$ ergodic convergence rate.

\section{Nonlinear Compositional Convex Optimization Problem}\label{sec:special problem}
Let $g$ and $f$ be the same functions as in \eqref{saddle_point},
$H: \dom(g) \rightarrow \dom(f)$ be nonlinear and continuously differentiable,  and $h: \R^q\rightarrow\R$ be convex and $L_h$-smooth with some constant $L_h>0$.

In this section, we consider the following nonlinear compositional convex optimization problem
\be\label{primal}
\min_{x\in \R^q}\{P(x):=g(x)+h(x)+f(H(x))\}.
\ee
Using the fact that $f^{**}=f$,  \eqref{primal} can be represented as
\be\label{saddle-point-special}
\min_{x\in \R^q}\max_{y\in \R^p} \big\{\cL(x, y):= g(x)+\Phi(x,y)-f^*(y)\big\} \text{~~with~~} \Phi(x,y) := h(x)+\langle H(x),y\rangle,
\ee
which is obviously a special case of \eqref{saddle_point}. Assume that Assumptions \ref{asmp-1}-\ref{asmp-2} hold for \eqref{saddle-point-special}. Moreover, we assume that, for any $y\in \dom(f^*)$, $\langle H(x),y\rangle$ is convex in $x$. Under this assumption, it follows that
$f(H(x))=\max_{y\in \R^p}\{ \langle H(x),y\rangle - f^* (y)\} $ is convex  in $x$ as well.

Since $\Phi(x,y)$ is linear in $y$, the Lipschitz constant $L_{yy}$ defined in Assumption \ref{asmp-0} (ii)  can set to be $0$.
Note that problem \eqref{saddle-point-special} is an extension of the case studied in \cite{Zhu23On}, where $h\equiv 0$ was considered.
Define $l(y) := \max_{x\in \R^q}\{\langle H(x),y\rangle -g(x)-h(x)\}$ for $y\in\R^p$. Then, the
dual problem of \eqref{primal} is given by
\be\label{dual}
\max_{y\in \R^p}\left\{D(y) := -l(-y)-f^*(y)\right\}.
\ee
On the other hand, by introducing an auxiliary variable $w\in\R^p$, the primal problem \eqref{primal} can be equivalently represented as the following nonlinear equality constrained problem
\be\label{two-block}
\min\nolimits_{x\in\R^q, \, w\in\R^p}  \{F(x,w) := g(x)+h(x)+f(w)  \text{~~s.t.~~}   H(x)-w=0\}.
\ee
Let $y\in \R^p$ be the Lagrange multiplier
and denote the Lagrange function of \eqref{two-block} by
\begin{align}\label{def:PhiL}
      \widetilde{\cL}(x,w,y)      :=   F(x,w) + \langle y, H(x) - w\rangle, \;\; (x,w,y) \in\dom(g)\times\dom(f)\times\R^p.
\end{align}
Similar to Assumption \ref{asmp-1} for \eqref{saddle_point}, we make the following assumption.
\begin{asmp}\label{asmp-linear-y}
Assume that problem \eqref{saddle-point-special} has at least one saddle point.
\end{asmp}
Under Assumption \ref{asmp-linear-y}, strong duality holds between \eqref{primal} and \eqref{dual}, and there exists $(x^\star, y^\star)\in\dom(g)\times\dom(f^*)$ such that $P(x^\star)=D(y^\star) =\cL(x^\star, y^\star)$. 
As such, $x^\star$ and $y^\star$ are optimal for the primal and dual problems, respectively, and
$({x^{\star}},{y^{\star}})$ is a solution of the primal-dual problem \eqref{saddle-point-special}.
Let $w^\star = H(x^\star)$. Then, $(x^\star, w^\star)$ is a solution of \eqref{two-block} and
%
%
%
$({x^{\star}}, {w^{\star}}, {y^{\star}})$ is a saddle point of $\widetilde{\cL}(\cdot)$, i.e.,
\ben
\widetilde{\cL}({x^{\star}},{w^{\star}},y) \leq \widetilde{\cL}({x^{\star}},{w^{\star}},{y^{\star}}) \leq\widetilde{\cL}(x,w,{y^{\star}}) \text{~~for all~~} (x, w, y) \in \dom(g)\times \dom(f) \times \R^p.
\een
Denote the set of saddle points of $\widetilde{\cL}(\cdot)$ by $\widetilde{\Omega}$, which is nonempty under Assumption \ref{asmp-linear-y}, i.e.,
\be\label{def:Omegas}
\widetilde{\Omega} = \left\{\ba{r}({x^{\star}}, {w^{\star}}, {y^{\star}}) \in \dom(g)\times \\
\dom(f)\times \dom(f^*)\ea ~\Big|~
\ba{l}
-H'({x^{\star}})^\top y^\star-\nabla h(x^\star) \in \partial g({x^{\star}}) \\
{y^{\star}} \in \partial f({w^{\star}}), \,\, H({x^{\star}}) = {w^{\star}}
\ea\right\}\neq \emptyset.
\ee
By using the Moreau's decomposition $y = \prox_{f/\sigma}(y) +  {1\over\sigma}  \prox_{\sigma f^*}( \sigma y)$, which holds for any $\sigma > 0$ and $y\in \R^p$, $y_{n}$ defined in \eqref{y_updating} can be split as
\be\label{y1_updating}
y_{n} = y_{n-1}+ \beta\tau_{n}(H(x_{n})-w_{n})
 \text{~~with~~} w_{n} := \prox_{ f/(\beta\tau_{n}) }\big( y_{n-1}/(\beta\tau_{n}) + H(x_{n})\big).
\ee
Moreover, recall that in this case $L_{yy} =0$ and $\Phi_n^y =0$.
Hence, the linesearch procedure using condition (\ref{linesearch-cond1}) can be much simplified.
In the rest of this section, without mentioning repeatedly, we always fix arbitrary a primal-dual solution triplet $({x^{\star}}, {w^{\star}},{y^{\star}})\in \widetilde{\Omega}$, let $\{(z_n,x_n,y_n)\}$ be the sequence generated by Algorithm \ref{algo1} when applied to the special case \eqref{saddle-point-special}, and $\{w_n\}$ be given by \eqref{y1_updating}.
Furthermore, we define
\be\label{def:Js}
  \widetilde{J}(x,w,y)
:= \widetilde{\cL}(x,w,y)  - \widetilde{\cL}({x^{\star}},{w^{\star}}, y)
= F(x,w)+\langle y, H(x)-w\rangle  -  F({x^{\star}},{w^{\star}}),
\ee
where $F(x,w)$ and $\widetilde{\cL}(x,w,y)$ are given in \eqref{two-block} and \eqref{def:PhiL}, respectively.

\subsection{Convergence Rate Analysis of PDAc-L for \eqref{saddle-point-special}} 
In this section, we establish convergence rate results of PDAc-L (i.e., Algorithm \ref{algo1}) when applied to the particular case \eqref{saddle-point-special}. These convergence rate results are measured in terms of function value residual and constraint violation of the equivalent problem \eqref{two-block}.

\begin{lem}\label{lem1-s}
Let $\theta_n$ be defined in \eqref{def:q-La}.
Then, for any $y\in \bR^p$, there holds
\be
\tau_n \widetilde{J}(x_n,w_n,y)
&\leq&\langle x_{n+1}-z_{n+1}, x^\star-x_{n+1}\rangle+ \frac{1}{\beta} \langle  y_{n}-y_{n-1}, y-y_{n}\rangle \nonumber\\
&& + \psi\delta_{n} \langle x_{n}-z_{n+1}, x_{n+1}- x_n \rangle + \tau_n\langle \theta_n, x_{n}-x_{n+1}\rangle. \label{lem1s:ineq}
\ee
\end{lem}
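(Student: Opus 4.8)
The plan is to follow the proof of Lemma~\ref{lem1} essentially verbatim on the primal side, since the $x$-updates in \eqref{x_updating} are unchanged, and to replace the dual ($f^*$) estimate by one built from the auxiliary variable $w_n$ and the function $f$ via the Moreau splitting \eqref{y1_updating}. The target \eqref{lem1s:ineq} differs from \eqref{lem1:ineq} only in that $J$ is replaced by $\widetilde{J}$, the fixed $y^\star$ is replaced by an arbitrary $y$, and the term $\tau_n\Phi_n^y$ disappears, which is consistent with $\Phi_n^y\equiv 0$ here since $\Phi$ is linear in $y$.

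First I would reproduce the primal estimates. Applying Fact~\ref{fact_proj} to the proximal steps defining $x_{n+1}$ and $x_n$ in \eqref{x_updating}, multiplying the $x_n$-inequality by $\delta_n=\tau_n/\tau_{n-1}$, using $x_n-z_n=\psi(x_n-z_{n+1})$, and adding the two, I obtain exactly as in \eqref{temp_01}--\eqref{temp_02} the bound
\[\tau_n\big(g(x_n)-g(x^\star)\big)\le \langle x_{n+1}-z_{n+1},x^\star-x_{n+1}\rangle+\psi\delta_n\langle x_n-z_{n+1},x_{n+1}-x_n\rangle+\tau_n\langle\nabla_x\Phi(x_n,y_n),x^\star-x_{n+1}\rangle+\tau_n\langle\nabla_x\Phi(x_{n-1},y_{n-1}),x_{n+1}-x_n\rangle.\]
To this I add the convexity estimate $\tau_n\big(h(x_n)-h(x^\star)\big)\le \tau_n\langle\nabla h(x_n),x_n-x^\star\rangle$ for the convex $L_h$-smooth $h$.

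Next I would treat the dual side. The splitting \eqref{y1_updating} yields, through Fact~\ref{fact_proj} applied to the prox defining $w_n$, the subgradient relation $y_n\in\partial f(w_n)$, hence $f(w_n)-f(w^\star)\le\langle y_n,w_n-w^\star\rangle$, together with the identity $H(x_n)-w_n=\tfrac{1}{\beta\tau_n}(y_n-y_{n-1})$. Using $w^\star=H(x^\star)$ to split $w_n-w^\star=(w_n-H(x_n))+(H(x_n)-H(x^\star))$ and substituting these two relations, the $f$-term and the arbitrary-$y$ coupling term combine into
\[\tau_n\big(f(w_n)-f(w^\star)\big)+\tau_n\langle y,H(x_n)-w_n\rangle\le \tfrac{1}{\beta}\langle y_n-y_{n-1},y-y_n\rangle+\tau_n\langle y_n,H(x_n)-H(x^\star)\rangle;\]
this is the step where the arbitrary $y$ enters cleanly, mirroring how $y^\star$ entered in Lemma~\ref{lem1}.

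Finally I would collect all gradient/coupling terms into $\widetilde{\cG}_n:=\langle\nabla_x\Phi(x_n,y_n),x^\star-x_{n+1}\rangle+\langle\nabla_x\Phi(x_{n-1},y_{n-1}),x_{n+1}-x_n\rangle+\langle\nabla h(x_n),x_n-x^\star\rangle+\langle y_n,H(x_n)-H(x^\star)\rangle$ and show $\widetilde{\cG}_n\le\langle\theta_n,x_n-x_{n+1}\rangle$. Using $\theta_n=\nabla_x\Phi(x_n,y_n)-\nabla_x\Phi(x_{n-1},y_{n-1})$ as in Lemma~\ref{lem1} converts the first two inner products into $\langle\nabla_x\Phi(x_n,y_n),x^\star-x_n\rangle+\langle\theta_n,x_n-x_{n+1}\rangle$; then $\nabla_x\Phi(x_n,y_n)=\nabla h(x_n)+H'(x_n)^\top y_n$ makes the $\nabla h$ contributions cancel, leaving the residual $\langle y_n,H'(x_n)(x^\star-x_n)+H(x_n)-H(x^\star)\rangle$. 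The main (and essentially only) obstacle is to verify that this residual is nonpositive: since $y_n\in\dom(f^*)$ and $x\mapsto\langle H(x),y_n\rangle$ is convex by assumption, the gradient inequality at $x_n$ tested against $x^\star$ gives precisely $\langle y_n,H'(x_n)(x^\star-x_n)+H(x_n)-H(x^\star)\rangle\le 0$. Summing the primal and dual estimates and recalling $\widetilde J(x_n,w_n,y)=\big(g(x_n)-g(x^\star)\big)+\big(h(x_n)-h(x^\star)\big)+\big(f(w_n)-f(w^\star)\big)+\langle y,H(x_n)-w_n\rangle$ then yields \eqref{lem1s:ineq}.
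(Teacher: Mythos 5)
Your proposal is correct and follows essentially the same route as the paper: the same two primal prox inequalities, the same dual estimate (your $y_n\in\partial f(w_n)$ argument is exactly the paper's prox inequality for $w_n$ combined with $y_n=y_{n-1}+\beta\tau_n(H(x_n)-w_n)$ and $H(x^\star)=w^\star$), and the same final bound on the collected gradient terms. The only cosmetic difference is that you split the convexity of $\Phi(\cdot,y_n)=h(\cdot)+\langle H(\cdot),y_n\rangle$ into its two summands (handling $h$ by a separate gradient inequality and the coupling term by the assumed convexity of $x\mapsto\langle H(x),y_n\rangle$), whereas the paper applies convexity of $\Phi(\cdot,y_n)$ in one stroke inside $\widetilde{\cG}_n$; the two are equivalent.
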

\begin{proof}
It follows from \eqref{temp_01}, \eqref{temp_02}, (\ref{y1_updating}) and Fact  \ref{fact_proj} that
\be\label{temps_001}
\left\{\ba{rcl}\tau_{n}\big(g(x_{n+1})-g(x^{\star})\big)
&\leq & \big\langle x_{n+1}-z_{n+1} + \tau_{n}\nabla_x \Phi(x_{n},y_{n}), ~ x^{\star}-x_{n+1}\big\rangle, \smallskip \\
\tau_{n}\big(g(x_{n})-g(x_{n+1})\big)&\leq&\left\langle \psi\delta_n(x_{n}-z_{n+1}) + \tau_{n}\nabla_x \Phi(x_{n-1},y_{n-1}), ~x_{n+1}-x_{n}\right\rangle, \smallskip \\
\tau_{n}\big(f(w_{n})-f({w^{\star}})\big) &\leq & - \tau_{n}\big\langle y_{n-1}+ \beta\tau_{n}(H(x_{n})-w_{n}), ~{w^{\star}}-w_{n} \big\rangle.
\ea\right.
\ee
By taking a sum to (\ref{temps_001}),
followed by adding $\tau_n \big(h(x_n) + \langle y, H(x_n)-w_n\rangle  - h(x^\star) \big)$ to both sides, using the notation $\Phi(\cdot)$, $F(\cdot)$ and $\widetilde{J}(\cdot)$ defined in \eqref{saddle-point-special}, \eqref{two-block} and \eqref{def:Js}, respectively, and taking into account the relations $y_{n}=y_{n-1} + \beta\tau_{n}(H(x_{n})-w_{n})$ and $H({x^{\star}})={w^{\star}}$, we obtain after elementary calculations that
\begin{align}\label{temps_03}
& \tau_{n}  \widetilde{J}(x_n,w_n,y)
= \tau_{n}\big(F(x_n,w_n)   +\langle y, H(x_n)-w_n\rangle - F(x^\star,w^\star) \big) \\
\leq \, & \langle x_{n+1}-z_{n+1},~  x^\star-x_{n+1}\rangle+\frac{1}{\beta} \langle y_{n}-y_{n-1}, y-y_{n}\rangle
+ \psi\delta_n\left\langle x_{n}-z_{n+1}, ~ x_{n+1}- x_{n}\right\rangle +\tau_{n}\widetilde{\cG}_n, \nonumber
\end{align}
where $\widetilde{\cG}_n$ is defined by
\ben
\widetilde{\cG}_n
&:=&\langle \nabla_x \Phi(x_{n},y_{n}), \,x^\star-x_{n+1}\rangle  + \big\langle \nabla_x \Phi(x_{n-1},y_{n-1}),\, x_{n+1}-x_{n}\big\rangle
 +\Phi (x_n,y_n)- \Phi(x^\star,y_n) \\
&=&\langle \theta_n, x_{n}-x_{n+1}\rangle + \langle\nabla_x \Phi(x_{n},y_{n}), ~ x^\star-x_{n}\rangle+ \Phi (x_n,y_n)- \Phi(x^\star,y_n)\\
&\leq &\langle \theta_n, x_{n}-x_{n+1}\rangle,
\een
where the second equality follows from the definition of $\theta_n$ in \eqref{def:q-La} and inequality is due to the convexity of $\Phi(\cdot)$ in $x$.
This together with \eqref{temps_03} implies \eqref{lem1s:ineq} immediately.
\end{proof}

Recall that $b_{n}$  and $\omega := \omega(\xi,\varphi)$ are defined in \eqref{ab_n} and \eqref{def:zeta}, respectively.
Similar to Lemma \ref{lem2}, we have the following result.
\begin{lem}\label{lem2s}
For any $y\in \bR^p$ and $n\geq 1$,  there holds
\ben
\widetilde{a}_{n+1}(y)+2 \tau_n \widetilde{J}(x_n,w_n,y)\leq \widetilde{a}_{n}(y)-b_n, \quad \forall y\in\bR^p,
\een
where
$\widetilde{a}_n (y) := \frac{\psi}{\psi-1}\|z_{n+1}-x^\star\|^2+\frac{1}{\beta}\|y_{n-1}-y \|^2 +\omega\delta_{n-1}\|x_n-x_{n-1}\|^2$.
\end{lem}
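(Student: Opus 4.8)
The plan is to mirror the proof of Lemma \ref{lem2} essentially verbatim, exploiting the fact that the estimate \eqref{lem1s:ineq} of Lemma \ref{lem1-s} has exactly the same structure as \eqref{lem1:ineq}, except that the arbitrary multiplier $y\in\bR^p$ replaces $y^\star$ and the term $\tau_n\Phi_n^y$ is absent. The absence of $\Phi_n^y$ reflects that $\Phi(x,y)=h(x)+\langle H(x),y\rangle$ in \eqref{saddle-point-special} is affine in $y$, so $\Phi_n^y\equiv 0$ by its definition \eqref{def:phi-y}; consequently the quantity $b_n$ from \eqref{ab_n} reduces to $b_n=-\frac{\tau_n\tau_{n-1}}{\xi}\|\theta_n\|^2+\frac{1}{\beta}\|y_n-y_{n-1}\|^2+\omega\delta_{n-1}\|x_n-x_{n-1}\|^2$.

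First I would apply the three-point identity \eqref{id} to the three inner products $\langle x_{n+1}-z_{n+1},x^\star-x_{n+1}\rangle$, $\frac{1}{\beta}\langle y_n-y_{n-1},y-y_n\rangle$, and $\psi\delta_n\langle x_n-z_{n+1},x_{n+1}-x_n\rangle$ on the right-hand side of \eqref{lem1s:ineq}, obtaining the exact analog of \eqref{ineq11} with $\|y_n-y\|^2$, $\|y_{n-1}-y\|^2$ in place of the $y^\star$-distances and with no $\Phi_n^y$ contribution. Next I would substitute the $x$-to-$z$ conversion \eqref{x-to-z}; as this identity involves only $x_{n+1}$, the iterates $z_{n+1},z_{n+2}$, and $x^\star$, it is untouched by the replacement of $y^\star$ by $y$, and it yields the analog of \eqref{ineq_rate1}.

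Then I would invoke Fact \ref{fact_uv} with $u=\psi\delta_n$, $v=1+\frac{1}{\psi}-\psi\delta_n$ (so $u+v=1+\frac{1}{\psi}>0$), $a=\|z_{n+1}-x_n\|$, $b=\|x_{n+1}-z_{n+1}\|$, together with $\|x_{n+1}-x_n\|\le a+b$, to merge the two squared-distance terms into the coefficient $\psi\delta_n\big(2-\frac{\psi^2\delta_n}{1+\psi}\big)$ on $\|x_{n+1}-x_n\|^2$, exactly as in \eqref{ineq_rate2}. Applying the Cauchy--Schwarz bound $2\langle\theta_n,x_n-x_{n+1}\rangle\le\frac{\xi}{\tau_{n-1}}\|x_n-x_{n+1}\|^2+\frac{\tau_{n-1}}{\xi}\|\theta_n\|^2$ then reproduces the analog of \eqref{yang-0821}. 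Finally, using $\delta_n\le\varphi$ (Step 2 of Algorithm \ref{algo1}) and the definition \eqref{def:zeta} of $\omega$ to get $2\psi-\xi-\frac{\psi^3\delta_n}{1+\psi}\ge\omega$, and regrouping the terms, the claimed inequality follows; here the regrouping crucially uses that the two occurrences of $\omega\delta_{n-1}\|x_n-x_{n-1}\|^2$ in $\widetilde{a}_n(y)$ and in $b_n$ cancel, so that $\widetilde{a}_n(y)-b_n$ matches the right-hand side of the analog of \eqref{yang-0821}.

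I do not expect a genuine obstacle, since the computation is a transcription of the proof of Lemma \ref{lem2}. The only care needed is bookkeeping: confirming that $\Phi_n^y$ vanishes so $b_n$ takes the stated reduced form, that the arbitrary $y$ enters only through the dual distance terms collected into $\widetilde{a}_n(y)$ and $\widetilde{a}_{n+1}(y)$ without interacting with the $x^\star$-dependent primal terms, and that the left-hand side now carries $\widetilde{J}(x_n,w_n,y)$ from \eqref{def:Js} rather than $J(x_n,y_n)$, which is exactly what Lemma \ref{lem1-s} supplies.
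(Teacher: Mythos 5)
Your proposal is correct and matches the paper's intent exactly: the paper omits this proof, stating only that it is ``completely analogous to that of Lemma \ref{lem2},'' and your step-by-step transcription (with $y$ in place of $y^\star$, $\Phi_n^y\equiv 0$ since $\Phi$ is affine in $y$, and $\widetilde J$ supplied by Lemma \ref{lem1-s}) is precisely that analogous argument.
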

\begin{proof}
  The proof is completely analogous to  that of Lemma \ref{lem2} and is therefore omitted.
\end{proof}

\begin{thm}[Sublinear convergence rate]\label{thm:scr}
There exists a constant $C_2> 0$ such that for any $N\geq 1$ there hold
$|F({\hat x}_N,{\hat w}_N)-F({x^{\star}},{w^{\star}})| \leq  C_2 / N$
and $\|H({\hat x}_N)-{\hat w}_N\| \leq   (2C_2/c)/N$,
where $c>0$ is a constant satisfying  $c \geq 2\|{y^{\star}}\|$, and ${\hat x}_N$ and ${\hat w}_N$ are defined as
\ben
{\hat x}_N =\frac{1}{s_N}\sum_{n=1}^N\tau_n x_n
~~\mbox{and}~~
{\hat w}_N=\frac{1}{s_N} \sum_{n=1}^N\tau_n w_n
~~\mbox{with}~~
s_N=\sum_{n=1}^N\tau_n.
\een
\end{thm}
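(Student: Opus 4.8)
The plan is to run the standard ``Lagrangian-gap $\Rightarrow$ function value plus feasibility'' machinery on the master estimate supplied by Lemma~\ref{lem2s}, the only genuinely new feature being the nonlinearity of $H$. First I would fix $y\in\bR^p$, sum the inequality of Lemma~\ref{lem2s} over $n=1,\ldots,N$ to telescope the $\widetilde a_n(y)$ terms, and control $-\sum_{n=1}^N b_n$ exactly as in \eqref{sum_bn}, obtaining $2\sum_{n=1}^N\tau_n\widetilde J(x_n,w_n,y)\leq \widetilde a_1(y)+\widetilde C$, where $\widetilde C=\sum_{n=1}^M|b_n|+(1-\nu)\eta\sum_{n=1}^M r_n$ is the same constant as in Theorem~\ref{thm22} (here $\Phi_n^y\equiv0$, so $b_n$ and $r_n$ do not depend on $y$) and $\widetilde a_1(y)=\frac{\psi}{\psi-1}\|z_2-x^\star\|^2+\omega\delta_0\|x_1-x_0\|^2+\frac1\beta\|y_0-y\|^2$. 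Dropping the nonnegative term $\widetilde a_{N+1}(y)$ leaves a clean bound on the $\tau$-weighted average of the gaps, valid for \emph{every} $y\in\bR^p$.

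The crucial passage is from the averaged iterates to the ergodic iterates $(\hat x_N,\hat w_N)$. Because $H$ is nonlinear, I cannot identify $\frac1{s_N}\sum_n\tau_n H(x_n)$ with $H(\hat x_N)$; instead I would invoke the section's standing assumption that $x\mapsto\langle H(x),y\rangle$ is convex for $y\in\dom(f^*)$, which makes $\widetilde J(\cdot,\cdot,y)$ jointly convex in $(x,w)$ for such $y$. Jensen's inequality then gives, for every $y\in\dom(f^*)$, the master estimate $\widetilde J(\hat x_N,\hat w_N,y)\leq(\widetilde a_1(y)+\widetilde C)/(2s_N)$. I would also record, from the saddle-point inequality defining $\widetilde\Omega$ together with $H(x^\star)=w^\star$, that $\widetilde J(\hat x_N,\hat w_N,y^\star)\geq0$, equivalently $F(\hat x_N,\hat w_N)-F(x^\star,w^\star)\geq-\langle y^\star,H(\hat x_N)-\hat w_N\rangle$.

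Writing $g_N:=H(\hat x_N)-\hat w_N$, I would then split the argument. For the constraint violation, I use $\widetilde J(\hat x_N,\hat w_N,y)-\widetilde J(\hat x_N,\hat w_N,y^\star)=\langle y-y^\star,g_N\rangle$ and discard the nonnegative $\widetilde J(\hat x_N,\hat w_N,y^\star)$ to get $\langle y-y^\star,g_N\rangle\leq(\widetilde a_1(y)+\widetilde C)/(2s_N)$ for $y\in\dom(f^*)$; choosing $y=y^\star+c\,g_N/\|g_N\|$ (when $g_N\neq0$; otherwise the bound is trivial) and estimating $\|y_0-y\|\leq\|y_0-y^\star\|+c$ yields $\|g_N\|\leq B_c/(2c\,s_N)$ with $B_c:=\frac{\psi}{\psi-1}\|z_2-x^\star\|^2+\omega\delta_0\|x_1-x_0\|^2+\frac1\beta(\|y_0-y^\star\|+c)^2+\widetilde C$ independent of $N$. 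For the function value I would combine the master estimate at $y=y^\star$ with the identity $F(\hat x_N,\hat w_N)-F(x^\star,w^\star)=\widetilde J(\hat x_N,\hat w_N,y^\star)-\langle y^\star,g_N\rangle$ for the upper bound and the saddle-point lower bound above, reaching $|F(\hat x_N,\hat w_N)-F(x^\star,w^\star)|\leq \widetilde a_1(y^\star)/(2s_N)+\|y^\star\|\,\|g_N\|$; inserting the feasibility bound and using $c\geq2\|y^\star\|$ (so $\|y^\star\|/c\leq\tfrac12$) collapses this to $(\widetilde a_1(y^\star)+\tfrac12 B_c)/(2s_N)$. Finally $s_N\geq\mu\underline\tau N$ from Lemma~\ref{lem_bound}(iv) lets me set $C_2:=(\widetilde a_1(y^\star)+\tfrac12 B_c)/(2\mu\underline\tau)$, which simultaneously yields $|F(\hat x_N,\hat w_N)-F(x^\star,w^\star)|\leq C_2/N$ and $\|H(\hat x_N)-\hat w_N\|\leq(2C_2/c)/N$.

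The main obstacle is exactly the nonlinearity of $H$: recovering $H(\hat x_N)$ rather than an average forces Jensen to be applied to the whole function $\widetilde J(\cdot,\cdot,y)$ instead of to $H$ alone, which is legitimate only for $y\in\dom(f^*)$; consequently the feasibility step must keep the perturbed dual point $y^\star+c\,g_N/\|g_N\|$ admissible in $\dom(f^*)$, i.e. keep $x\mapsto\langle H(x),y\rangle$ convex along the chosen direction. This is the delicate bookkeeping point that does not arise in the purely linear analyses of \cite{ChYZ2022GRPDAL}, where $H$ is linear and every such $y$ is automatically admissible.
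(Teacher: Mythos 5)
Your proposal is correct and follows essentially the same route as the paper's proof: telescope Lemma~\ref{lem2s} and control $\sum_n b_n$ via \eqref{sum_bn}, apply Jensen's inequality to the jointly convex $\widetilde J(\cdot,\cdot,y)$, extract the feasibility bound by a dual perturbation of size $c$, and recover both estimates from the saddle-point inequality together with $s_N\ge\mu\underline{\tau}N$; the only cosmetic difference is that the paper maximizes over the ball $\|y\|\le c$ centered at the origin, whereas you evaluate at the single point $y^\star+c\,g_N/\|g_N\|$ (a ball recentered at $y^\star$). The admissibility caveat you flag --- that Jensen requires $x\mapsto\langle H(x),y\rangle$ to be convex, which the section's standing assumption guarantees only for $y\in\dom(f^*)$ --- is a genuine observation, but it applies verbatim to the paper's own supremum over $\|y\|\le c$, so it is a shared subtlety rather than a defect specific to your argument.
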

\begin{proof}
Let $y\in\bR^p$ be arbitrarily fixed.
Recall that $b_n$ and $\widetilde{a}_n(y)$ are defined in \eqref{ab_n}  and Lemma \ref{lem2s}, respectively.
 By definition, the sequence  $\{\widetilde{a}_n(y)\}$ is nonnegative.
Lemma \ref{lem2s} implies that
$2 \tau_n \widetilde{J}(x_n,w_n,y)\leq \widetilde{a}_{n}(y) - \widetilde{a}_{n+1}(y) - b_n$, a sum of which over
$n = 1, \ldots, N$ yields
\be
 2\sum_{n=1}^N  \tau_n  \widetilde{J}(x_n,w_n,y) \leq \widetilde{a}_1(y) - \widetilde{a}_{N+1}(y)
 - \sum_{n=1}^N b_n \leq \widetilde{a}_1(y) - \sum_{n=1}^N b_n
 \le \widetilde{a}_1(y) + \tilde{C},
\label{thm2-1}
\ee
where the last inequality follows from (\ref{sum_bn}) and
 $ \tilde{C} = \sum_{n=1}^M |b_n| + (1-\nu) \eta \sum_{n=1}^M r_n$.
Since $\widetilde{J}(x,w,y)$ is convex in $(x,w)$, it follows from the definitions of ${\hat x}_N$ and ${\hat w}_N$ and Jensen's inequality that
\be
\widetilde{J}\big({\hat x}_N,{\hat w}_N,y\big) \leq {1 \over s_N} \sum_{n=1}^N \tau_n  \widetilde{J}(x_n,w_n,y).
\label{thm2-2}
\ee
Combining \eqref{thm2-1}, \eqref{thm2-2} and the definition  of $\widetilde{J}(\cdot)$ in \eqref{def:Js}, we obtain
\ben
F({\hat x}_N,{\hat w}_N)+\langle y, H({\hat x}_N)-{\hat w}_N\rangle-F({x^{\star}},{w^{\star}})\leq
(\widetilde{a}_1(y) + \tilde{C}) / (2s_N).
\een
By Lemma~\ref{lem_bound} (iv), we have $s_N = \sum_{n=1}^N\tau_n \geq \mu\underline{\tau} N$. By taking the maximum on both sides of
the above inequality 
over $\|y\|\leq c$ and defining $\bar{C}= \sup_{y} \{\widetilde{a}_1(y): \,  \|y\|\leq c\} / (2\mu\underline{\tau}) > 0$ and $C_2 = \bar{C}+ \tilde{C}$, we obtain
\be\label{key-result}
F({\hat x}_N,{\hat w}_N)+c\|H( {\hat x}_N)-{\hat w}_N\|-F({x^{\star}},{w^{\star}})\leq C_2/N,
\ee
which implies $F({\hat x}_N,{\hat w}_N)-F({x^{\star}},{w^{\star}})\leq C_2/N$.
Recall that $\widetilde{{\cal L}}(\cdot)$ is defined in \eqref{def:PhiL}.
It follows from $\widetilde{{\cal L}}({x^{\star}},{w^{\star}},{y^{\star}}) \leq \widetilde{{\cal L}}({\hat x}_N,{\hat w}_N,{y^{\star}})$,
$H({x^{\star}}) = {w^{\star}}$ and $\|{y^{\star}}\| \leq c/2$ that
\begin{eqnarray}\label{jy-5}
F({x^{\star}},{w^{\star}})-F({\hat x}_N,{\hat w}_N)&\leq&\langle {y^{\star}}, H({\hat x}_N)-{\hat w}_N\rangle
 \leq (c/2)\|H({\hat x}_N)-{\hat w}_N\|,
\end{eqnarray}
which together with \eqref{key-result} implies
\begin{eqnarray*}
c\|H({\hat x}_N)-{\hat w}_N\|&\leq& F({x^{\star}},{w^{\star}}) - F({\hat x}_N,{\hat w}_N)+  C_2 / N
\leq  (c/2)\|H({\hat x}_N)-{\hat w}_N\|+  C_2 / N.
\end{eqnarray*}
As a result, we derive $\|H({\hat x}_N)-{\hat w}_N\|\leq (2C_2/c)/N$. It then follows from \eqref{jy-5} that
$F({x^{\star}},{w^{\star}})-F({\hat x}_N,{\hat w}_N)\leq C_2/N$, and thus
$|F({\hat x}_N,{\hat w}_N)-F({x^{\star}},{w^{\star}})| \leq C_2/N$.
The proof is completed.
\end{proof}

\subsection{Acceleration When $g$ is Strongly Convex}\label{sec-acc-L}
In this section, we assume that $g$ is strongly convex with modulus $\gamma > 0$, i.e.,
\ben
g(x) \geq g(\tilde{x}) + \langle \zeta, x - \tilde{x}\rangle +
\frac{\gamma}{2} \| x -\tilde{x}\|^2,~~ \forall x, \tilde{x} \in \dom (g), ~ \zeta \in \partial g(\tilde{x}).
\een
Under this assumption, we are able to present an accelerated algorithm by adaptively tuning the parameter $\beta$ in Algorithm
\ref{algo1}.
Recall that $\psi\in(1,1+\sqrt{3})$ and $\omega := \omega(\xi, \varphi)$ is defined in \eqref{def:zeta}.
For convenience, we define
\be
\Theta_{sc} &:=& \{(\psi, \xi,\varphi)~|~
\xi>0,~~\psi>\varphi>1~~\mbox{and}~~\omega(\xi,\varphi)>0\}.\label{def:Theta-sc}
\ee
Recall that for problem \eqref{saddle-point-special}  we have $\Phi(x,y) = h(x)+\langle H(x),y\rangle$ and
$\theta_n$ is defined in  \eqref{def:q-La}.
The proposed accelerated algorithm is summarized below.

\vskip5mm
\hrule\vskip2mm
\begin{algo}
[Accelerated PDAc-L (aPDAc-L) when $g$ is $\gamma$-strongly convex]\label{algo1-sc}
{~}\vskip 1pt {\rm
\begin{description}
\item[{\em Step 0.}] Choose $\psi\in(1,1+\sqrt{3})$ and $(\psi, \xi,\varphi)\in\Theta_{sc}$, $\tau_{\max}>0$ and $\mu\in(0,1)$. Choose $x_0 \in \dom(g),$ $y_0\in \dom(f^*)$, $\beta_0>0$ and $\tau_0 \in (0, \tau_{\max}]$.
    Set $z_{0}=x_0$, $\omega:= \omega(\xi,\varphi)$, $\delta_0=1$ and $n=1$.

\item[{\em Step 1.}] Compute $z_n$ and $x_n$ according to \eqref{x_updating} and
define
\be
 \rho_n := \frac{\psi-\varphi}{\psi+\varphi\gamma\tau_{n-1}},  ~~
 \beta_{n} := (1+ \gamma\rho_n \tau_{n-1})\beta_{n-1}   \text{~~and~~}
 \kappa_{n-1} := \omega \delta_{n-1}+ \gamma\tau_{n-1}.
 \label{beta-gstrong}
\ee
\item[{\em Step 2.}] Set $\tau = \min\{\varphi\tau_{n-1}, \tau_{\max}\}$ and compute
\ben
w_{n} = \prox_{ f/(\beta_n\tau_{n}) }\big( y_{n-1}/(\beta_n\tau_{n}) + H(x_{n})\big)
\text{~~and~~}
y_{n} = y_{n-1}+ \beta_n\tau_{n}(H(x_{n})-w_{n}),
\een
where $\tau_n = \tau \mu^i$ and $i$ is the smallest nonnegative integer such that
\be\label{linesearch-cond-sc}
\frac{\tau_n\tau_{n-1}}{\xi}\|\theta_n\|^2 &\leq& \frac{\kappa_{n-1}\beta_{n-1}}{\beta_n} \|x_{n}-x_{n-1}\|^2
+\frac{1}{\beta_n}\|y_n-y_{n-1}\|^2.
\ee
\item[{\em Step 3.}] Set $\delta_n= \tau_n/\tau_{n-1}$, $n \leftarrow n+1$ and go to Step 1.
  \end{description}
}
\end{algo}
\vskip1mm\hrule\vskip5mm

Recall that $\widetilde{J}(\cdot)$ is defined in   \eqref{def:Js}.
Similarly to Lemma \ref{lem1-s}, we can derive the following stronger result due to the strong convexity of $g$. 

\begin{lem}\label{lem1-sc}
For any $y\in\R^p$, there holds
\be
\tau_n \widetilde{J}(x_n,w_n,y)
&\leq&\langle x_{n+1}-z_{n+1}, x^\star-x_{n+1}\rangle+ \frac{1}{\beta_n} \langle  y_{n}-y_{n-1}, y-y_{n}\rangle
+ \tau_n\langle \theta_n, x_{n}-x_{n+1}\rangle
\nonumber\\
&& + \psi\delta_{n} \langle x_{n}-z_{n+1}, x_{n+1}- x_n \rangle  - \frac{\gamma\tau_n}{2} \big( \|x_{n+1}-{x^{\star}}\|^2 +  \|x_{n+1}-x_n\|^2\big).
\label{lem1-sc:ineq}
\ee
\end{lem}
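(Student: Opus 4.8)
The plan is to reproduce the argument of Lemma \ref{lem1-s} almost verbatim, incorporating the two modifications dictated by Algorithm \ref{algo1-sc}: the dual scaling $\beta$ is replaced by the iteration-dependent $\beta_n$ from \eqref{beta-gstrong}, and the proximal inequalities for $g$ are sharpened by the strong convexity modulus $\gamma>0$ via Fact \ref{fact_proj}. Concretely, I would first apply Fact \ref{fact_proj} to the two $g$-updates $x_{n+1}=\prox_{\tau_n g}(z_{n+1}-\tau_n\nabla_x\Phi(x_n,y_n))$ and $x_n=\prox_{\tau_{n-1}g}(z_n-\tau_{n-1}\nabla_x\Phi(x_{n-1},y_{n-1}))$, evaluated at $x^\star$ and at $x_{n+1}$ respectively. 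Since $g$ is now $\gamma$-strongly convex, these become the strongly convex refinements of \eqref{temp_01} and of \eqref{tem1}, each acquiring an extra negative term: $-\tfrac{\gamma\tau_n}{2}\|x_{n+1}-x^\star\|^2$ from the first and $-\tfrac{\gamma\tau_{n-1}}{2}\|x_{n+1}-x_n\|^2$ from the second.

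The key bookkeeping observation is that after multiplying the $x_n$-update inequality by $\delta_n=\tau_n/\tau_{n-1}$ (exactly the passage from \eqref{tem1} to \eqref{temp_02}), its correction becomes $-\delta_n\tfrac{\gamma\tau_{n-1}}{2}\|x_{n+1}-x_n\|^2=-\tfrac{\gamma\tau_n}{2}\|x_{n+1}-x_n\|^2$, because $\delta_n\tau_{n-1}=\tau_n$. Summing the two $g$-inequalities produces the common left-hand side $\tau_n\big(g(x_n)-g(x^\star)\big)$ together with precisely the pair $-\tfrac{\gamma\tau_n}{2}\big(\|x_{n+1}-x^\star\|^2+\|x_{n+1}-x_n\|^2\big)$ that appears in \eqref{lem1-sc:ineq}.

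For the dual block I would invoke Fact \ref{fact_proj} with $h=f$ (no strong convexity here) on the $w_n$-update in \eqref{y1_updating} with scaling $1/(\beta_n\tau_n)$, which gives $\tau_n\big(f(w_n)-f(w^\star)\big)\le\tau_n\langle y_n,\,w_n-w^\star\rangle$ after using $y_n=y_{n-1}+\beta_n\tau_n(H(x_n)-w_n)$. Adding $\tau_n\big(h(x_n)+\langle y,H(x_n)-w_n\rangle-h(x^\star)\big)$ to the sum of the three inequalities reconstitutes $\tau_n\widetilde{J}(x_n,w_n,y)$ on the left via \eqref{def:Js}. On the right, substituting $\Phi(x,y)=h(x)+\langle H(x),y\rangle$ and $w^\star=H(x^\star)$ collapses the $h$-, $f$- and coupling-terms: the purely dual remainder combines, using $H(x_n)-w_n=\tfrac1{\beta_n\tau_n}(y_n-y_{n-1})$, into $-\tfrac1{\beta_n}\langle y_n,\,y_n-y_{n-1}\rangle+\tfrac1{\beta_n}\langle y,\,y_n-y_{n-1}\rangle=\tfrac1{\beta_n}\langle y_n-y_{n-1},\,y-y_n\rangle$, whereas the gradient remainder assembles into exactly $\tau_n\widetilde{\cG}_n$ with $\widetilde{\cG}_n$ as in the proof of Lemma \ref{lem1-s}. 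Convexity of $\Phi(\cdot,y_n)$ in $x$ then yields $\widetilde{\cG}_n\le\langle\theta_n,\,x_n-x_{n+1}\rangle$, while the two $x$-inner products $\langle x_{n+1}-z_{n+1},x^\star-x_{n+1}\rangle$ and $\psi\delta_n\langle x_n-z_{n+1},x_{n+1}-x_n\rangle$ are carried along untouched, just as in \eqref{lem1s:ineq}. Retaining in addition the two strong-convexity corrections gives \eqref{lem1-sc:ineq}.

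The main obstacle is not conceptual but bookkeeping: one must verify that the $\delta_n$-scaling of the $x_n$-update inequality converts $-\tfrac{\gamma\tau_{n-1}}{2}$ into exactly $-\tfrac{\gamma\tau_n}{2}$, so that both corrections share the single factor $\gamma\tau_n/2$, and one must track the dual cancellations so that the $\beta_n$-weighted terms recombine cleanly into $\tfrac1{\beta_n}\langle y_n-y_{n-1},y-y_n\rangle$. Everything else is identical to Lemma \ref{lem1-s} once $\beta$ is formally replaced by $\beta_n$; note that the specific recursion defining $\beta_n$, $\rho_n$ and $\kappa_{n-1}$ in \eqref{beta-gstrong} plays no role at this stage and is only needed later, when assembling the telescoping descent estimate that exploits the extra $\gamma$-terms to deliver the accelerated rate.
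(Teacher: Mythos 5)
Your proposal is correct and follows essentially the same route as the paper: the paper's proof simply states the two strong-convexity-sharpened proximal inequalities for the $g$-updates (already in their $\delta_n$-scaled form, so that $\delta_n\cdot\tfrac{\gamma\tau_{n-1}}{2}=\tfrac{\gamma\tau_n}{2}$, exactly the bookkeeping point you flag) and then declares the rest "entirely analogous to Lemma \ref{lem1-s}." Your write-up fills in the dual-block and recombination details that the paper omits by reference, and does so consistently with the earlier proof.
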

\begin{proof}
As $g$ is $\gamma$ strongly convex, we can strengthen the first two inequalities in \eqref{temps_001} by
\ben
 \left\{\ba{rcl}\tau_{n}\big(g(x_{n+1})-g(x^{\star})\big)
&\leq & \big\langle x_{n+1}-z_{n+1} + \tau_{n}\nabla_x \Phi(x_{n},y_{n}), ~ x^{\star}-x_{n+1}\big\rangle- \frac{\gamma\tau_n}{2} \|x_{n+1}-{x^{\star}}\|^2, \smallskip \\
\tau_{n}\big(g(x_{n})-g(x_{n+1})\big)&\leq&\left\langle \psi\delta_n(x_{n}-z_{n+1}) + \tau_{n}\nabla_x \Phi(x_{n-1},y_{n-1}), ~x_{n+1}-x_{n}\right\rangle- \frac{\gamma\tau_n}{2} \|x_{n+1}-x_n\|^2.
\ea\right.
\een
The remaining proof is entirely analogous to Lemma \ref{lem1-s} and is thus omitted for simplicity.
\end{proof}

Similar to Lemma \ref{lem2s}, we can establish the following result.
\begin{lem}
For any $y\in \bR^p$ and $n\geq 1$,  there holds
\be\label{key-ineq2}
\beta_{n+1}A_{n+1}(y)+\beta_{n}\tau_{n} \widetilde{J}(x_n,w_n,y)
\leq \beta_{n}A_{n}(y) - \beta_{n}B_n,
\ee
where
\be\label{def:AnyBn}
\left\{
\ba{rcl}
A_{n}(y) &:=&\frac{\psi+\gamma\tau_n}{2(\psi-1)}\|z_{n+1}-{x^{\star}}\|^2+ \frac{1}{2\beta_{n}}\|y_{n-1}-y\|^2+\frac{\kappa_{n-1}\beta_{n-1}}{\beta_n}\|x_{n}-x_{n-1}\|^2, \smallskip \\
B_n &:=&
\frac{1}{\beta_n}\|y_n-y_{n-1}\|^2 +\frac{\kappa_{n-1}\beta_{n-1}}{\beta_n}\|x_{n}-x_{n-1}\|^2 -\frac{\tau_n\tau_{n-1}}{\xi}\|\theta_n\|^2.
\ea
\right.
\ee
\end{lem}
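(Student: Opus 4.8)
The plan is to follow closely the template of Lemmas \ref{lem2} and \ref{lem1-sc}, now carrying the multiplicative weight $\beta_n$ and the strong-convexity terms through the whole estimate. First I would start from inequality \eqref{lem1-sc:ineq} and apply identity \eqref{id} to the three inner products $\langle x_{n+1}-z_{n+1}, x^\star-x_{n+1}\rangle$, $\langle y_n-y_{n-1}, y-y_n\rangle$ and $\langle x_n-z_{n+1}, x_{n+1}-x_n\rangle$, turning each into a combination of squared norms, while the penalty $-\frac{\gamma\tau_n}{2}(\|x_{n+1}-x^\star\|^2+\|x_{n+1}-x_n\|^2)$ is simply added in. I would then use \eqref{x-to-z} to rewrite $(1+\gamma\tau_n)\|x_{n+1}-x^\star\|^2$ in terms of $\|z_{n+2}-x^\star\|^2$, $\|z_{n+1}-x^\star\|^2$ and $\|x_{n+1}-z_{n+1}\|^2$, and multiply the inequality by $\beta_n$. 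At this stage the coefficient of $\|z_{n+1}-x^\star\|^2$ on the right collapses to $\frac{(\psi+\gamma\tau_n)\beta_n}{2(\psi-1)}$, matching the $z$-term of $\beta_n A_n(y)$, and the $y$-terms align with the $\|y_{n-1}-y\|^2$ and $\|y_n-y\|^2$ pieces of $\beta_n A_n(y)$ and $\beta_{n+1}A_{n+1}(y)$.

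The decisive new step is the $\|z_{n+2}-x^\star\|^2$ coefficient, which must telescope into the $z$-term of $\beta_{n+1}A_{n+1}(y)$. After the conversion it equals $\frac{\psi(1+\gamma\tau_n)\beta_n}{2(\psi-1)}$, whereas $A_{n+1}(y)$ carries the factor $\psi+\gamma\tau_{n+1}$. Here I would invoke the recursion \eqref{beta-gstrong}: substituting $\rho_{n+1}=\frac{\psi-\varphi}{\psi+\varphi\gamma\tau_n}$ into $\beta_{n+1}=(1+\gamma\rho_{n+1}\tau_n)\beta_n$ yields the exact identity $\psi(1+\gamma\tau_n)\beta_n=(\psi+\gamma\varphi\tau_n)\beta_{n+1}$. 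Since $\tau_{n+1}\le\varphi\tau_n$ (Step~2), we get $\psi+\gamma\tau_{n+1}\le\psi+\gamma\varphi\tau_n$, hence $\psi(1+\gamma\tau_n)\beta_n\ge(\psi+\gamma\tau_{n+1})\beta_{n+1}$; as this term sits on the left-hand side, it may be replaced by the smaller quantity from $A_{n+1}(y)$. The condition $\psi>\varphi$ built into $\Theta_{sc}$ in \eqref{def:Theta-sc} guarantees $\rho_{n+1}>0$, so $\beta_n$ is nondecreasing, which is the source of the acceleration.

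It remains to absorb the $x$-difference and $\theta_n$ terms into $-\beta_n B_n$ together with the penalty $\kappa_n\beta_n\|x_{n+1}-x_n\|^2$ of $A_{n+1}(y)$. Mirroring Lemma \ref{lem2}, I would apply Fact \ref{fact_uv} with $u=\psi\delta_n$ and $v=1+\frac{1+\gamma\tau_n}{\psi}-\psi\delta_n$ (so $u+v=1+\frac{1+\gamma\tau_n}{\psi}>0$) to bound $\psi\delta_n\|x_n-z_{n+1}\|^2+v\|x_{n+1}-z_{n+1}\|^2$ below by $\psi\delta_n\big(1-\frac{\psi^2\delta_n}{\psi+1+\gamma\tau_n}\big)\|x_{n+1}-x_n\|^2$, the strong-convexity shift turning the denominator $1+\psi$ of \eqref{def:zeta} into $\psi+1+\gamma\tau_n$. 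After adding the $-\gamma\tau_n$ contribution and the Cauchy--Schwarz bound $2\langle\theta_n,x_n-x_{n+1}\rangle\le\frac{\xi}{\tau_{n-1}}\|x_{n+1}-x_n\|^2+\frac{\tau_{n-1}}{\xi}\|\theta_n\|^2$, the coefficient of $\|x_{n+1}-x_n\|^2$ becomes $\delta_n\big(2\psi-\xi-\frac{\psi^3\delta_n}{\psi+1+\gamma\tau_n}\big)+\gamma\tau_n$. Using $\delta_n\le\varphi$ and $\psi+1+\gamma\tau_n\ge 1+\psi$ with the definition \eqref{def:zeta} of $\omega$, this is at least $\omega\delta_n+\gamma\tau_n=\kappa_n$, exactly the penalty demanded by $A_{n+1}(y)$. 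The leftover $\|y_n-y_{n-1}\|^2$ and $\|\theta_n\|^2$ terms reassemble into $-\beta_n B_n$ with $B_n$ as in \eqref{def:AnyBn}, and the linesearch rule \eqref{linesearch-cond-sc} is precisely the statement $B_n\ge0$.

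The main obstacle is not any single inequality but the simultaneous bookkeeping: one must keep the evolving weights $\beta_n$, the ratios $\delta_n$, and the strong-convexity terms $\gamma\tau_n$ aligned through every substitution, and verify that the precise recursion \eqref{beta-gstrong} is what makes the $\|z-x^\star\|^2$ coefficients telescope, while carrying all multiplicative factors (including the $1/2$ weights in $A_n$) consistently. The algebraic identity $\psi(1+\gamma\tau_n)\beta_n=(\psi+\gamma\varphi\tau_n)\beta_{n+1}$ is the crux; once it is in hand, the remaining manipulations are the strong-convexity analogues of the already-established Lemmas \ref{lem2} and \ref{lem2s}.
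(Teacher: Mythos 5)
Your proposal is correct and follows essentially the same route as the paper's proof: expand the inner products of \eqref{lem1-sc:ineq} via \eqref{id}, convert $\|x_{n+1}-x^\star\|^2$ with \eqref{x-to-z}, apply Fact \ref{fact_uv} with the shifted denominator $1+\psi+\gamma\tau_n$ and the Cauchy--Schwarz bound on $\langle\theta_n,x_n-x_{n+1}\rangle$ to recover $\kappa_n$, and then use the recursion \eqref{beta-gstrong} together with $\tau_{n+1}\le\varphi\tau_n$ to make the $\|z-x^\star\|^2$ coefficients telescope --- your identity $\psi(1+\gamma\tau_n)\beta_n=(\psi+\gamma\varphi\tau_n)\beta_{n+1}$ is exactly the content of the paper's inequality \eqref{jy-6}. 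No gaps.
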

\begin{proof}
By applying (\ref{id}) to the first, second, and fourth inner products in \eqref{lem1-sc:ineq}, and then reorganizing the terms, we obtain
\ben
(1+\gamma \tau_n)\|x_{n+1}&-&x^\star\|^2+\frac{1}{\beta_n}\|y_{n}-y\|^2+ 2 \widetilde{J}(x_n,w_n,y)\nonumber\\
&\leq&\|z_{n+1}-x^\star\|^2+\frac{1}{\beta_n}\|y_{n-1}-y\|^2 -\frac{1}{\beta_n}\|y_{n}-y_{n-1}\|^2 + 2\tau_{n}\langle \theta_n, x_{n}-x_{n+1}\rangle \nonumber\\
&&-\psi\delta_n\|z_{n+1}-x_n\|^2-(1-\psi\delta_n)\|x_{n+1}-z_{n+1}\|^2
-(\psi\delta_n+\gamma \tau_n)\|x_{n+1}-x_n\|^2.
\een
Then, by plugging \eqref{x-to-z} into the above inequality and combining the terms, we obtain
\be\label{temp-ineq-sc}
&&\frac{\psi(1+\gamma\tau_n)}{\psi-1}\|z_{n+2}-x^\star\|^2+\frac{1}{\beta_n}\|y_{n}-y\|^2+ 2 \widetilde{J}(x_n,w_n,y)\nonumber\\
&\leq&\frac{\psi+\gamma\tau_n}{\psi-1}\|z_{n+1}-x^\star\|^2+\frac{1}{\beta_n}\|y_{n-1}-y\|^2 -\frac{1}{\beta_n}\|y_{n}-y_{n-1}\|^2 + 2\tau_{n}\langle \theta_n, x_{n}-x_{n+1}\rangle  \\
&&-\psi\delta_n\|z_{n+1}-x_n\|^2-\Big(1+\frac{1+\gamma \tau_n}{\psi}-\psi\delta_n\Big)\|x_{n+1}-z_{n+1}\|^2
-(\psi\delta_n+\gamma \tau_n)\|x_{n+1}-x_n\|^2. \nonumber
\ee
On the other hand, Fact \ref{fact_uv} with $u=\psi\delta_n$, $v=1+\frac{1+\gamma \tau_n}{\psi}-\psi\delta_n$,
$a=\|z_{n+1}-x_n\|$, $b=\|x_{n+1}-z_{n+1}\|$ and  $\|x_{n+1}-x_{n}\|\leq  a+b$ imply
\ben\label{ineq-improve-sc}
\psi\delta_n\Big(1-\frac{\psi^2\delta_n}{1+\psi+\gamma \tau_n}\Big) \|x_{n+1}-x_n\|^2 \leq
\psi\delta_n \|z_{n+1}-x_n\|^2+ \Big(1+\frac{1+\gamma \tau_n}{\psi}-\psi\delta_n\Big) \|x_{n+1}-z_{n+1}\|^2.
\een
Moreover, since we have $2\langle \theta_n, x_n-x_{n+1}\rangle \leq\frac{\xi}{\tau_{n-1}}\|x_n-x_{n+1}\|^2+ \frac{\tau_{n-1}}{\xi}\|\theta_n\|^2$, substituting this into \eqref{temp-ineq-sc} and utilizing the above inequality, we deduce
\be\label{ineq_rate2-sc}
 \frac{\psi(1+\gamma\tau_n)}{\psi-1}\|z_{n+2}&-&x^\star\|^2 +\frac{1}{\beta_n}\|y_{n}-y\|^2+ 2 \tau_n \widetilde{J}(x_n,w_n,y)\nonumber\\
&\leq&\frac{\psi+\gamma\tau_n}{\psi-1}\|z_{n+1}-x^\star\|^2+\frac{1}{\beta_n}\|y_{n-1}-y\|^2 + \frac{\tau_n\tau_{n-1}}{\xi}\|\theta_n\|^2  \nonumber \\
&&-\frac{1}{\beta_n}\|y_n-y_{n-1}\|^2 -\Big(\delta_n\big(2\psi-\xi-\frac{\psi^3\delta_n}{1+\psi+\gamma \tau_n}\big)+ \gamma\tau_n\Big)\|x_{n+1}-x_n\|^2\nonumber\\
&\leq&\frac{\psi+\gamma\tau_n}{\psi-1}\|z_{n+1}-x^\star\|^2+\frac{1}{\beta_n}\|y_{n-1}-y\|^2 + \frac{\tau_n\tau_{n-1}}{\xi}\|\theta_n\|^2  \nonumber \\
&&-\frac{1}{\beta_n}\|y_n-y_{n-1}\|^2 -\kappa_n\|x_{n+1}-x_n\|^2,
\ee
where the second inequality follows from $\kappa_n =\omega\delta_n+ \gamma\tau_n$ (see \eqref{beta-gstrong}), $(\psi, \xi,\varphi)\in\Theta_{sc}$ (see  \eqref{def:Theta-sc} and \eqref{def:zeta}) and $\delta_n\leq \varphi$.
It follows from $\tau_{n+1}\leq\varphi\tau_{n}$ and the definition of $\rho_n$ in \eqref{beta-gstrong} that
\be\label{jy-6}
  \frac{\psi(1+\gamma\tau_n)}{\psi+\gamma\tau_{n+1}}
&\geq& \frac{\psi(1+\gamma\tau_n)}{\psi+\gamma\varphi\tau_{n}}
= 1+\frac{(\psi-\varphi)\gamma\tau_n}{{\psi}+\gamma\varphi\tau_n}
=  1+\rho_{n+1}\gamma\tau_n.
\ee
Consequently, we have
\ben\label{key-ineq}
\frac{\psi(1+\gamma\tau_n)}{\psi-1} = \frac{\psi(1+\gamma\tau_n)}{\psi+\gamma\tau_{n+1}} \frac{\psi+\gamma\tau_{n+1}}{\psi-1}
\stackrel{\eqref{jy-6}}\geq (1+\rho_{n+1}\gamma\tau_n)\frac{\psi+\gamma\tau_{n+1}}{\psi-1}
\stackrel{\eqref{beta-gstrong}}=\frac{\beta_{n+1}}{\beta_n}\frac{\psi+\gamma\tau_{n+1}}{\psi-1}.
\een
By plugging the above inequality into (\ref{ineq_rate2-sc}) and using  the definitions of $A_n(y)$ and $B_n$ in \eqref{def:AnyBn},
we obtain the desired result \eqref{key-ineq2}.
\end{proof}

The subsequent lemma plays a crucial role in proving convergence rate results for Algorithm~\ref{algo1-sc}. Its proof shares similarity to that of Lemma~\ref{lem_bound} and is deferred to Appendix \ref{proof:lem-beta-cnn}.
\begin{lem}\label{lem-beta-sc}
(i) The linesearch step of Algorithm~\ref{algo1-sc}, i.e., Step 2, always terminates.
(ii) The sequence $\{(x_n,y_n,z_n): n\geq 1\}$ generated by Algorithm \ref{algo1-sc} is bounded;
(iii) There exist  constants $c_1, c_2>0$ such that $\sqrt{\beta_n} \tau_n\geq c_1$ and $\beta_n \geq c_2 n^2$ for all $n\geq 1$.
\end{lem}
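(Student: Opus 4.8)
The plan is to mirror the structure of Lemma~\ref{lem_bound}, exploiting two simplifications special to \eqref{saddle-point-special}: here $L_{yy}=0$ and $\Phi_n^y\equiv0$, and the linesearch test \eqref{linesearch-cond-sc} is exactly the assertion $B_n\ge0$ for $B_n$ in \eqref{def:AnyBn}. For part (i) I would argue by contradiction as in Lemma~\ref{lem_bound}(i): if the search never halts at iteration $n$, then the reverse of \eqref{linesearch-cond-sc} holds for every trial $\lambda=\tau\mu^i$ with trial points $y_n(\lambda),\theta_n(\lambda)$ defined as in \eqref{def:ytP}. Bounding $\|\theta_n(\lambda)\|^2\le 2L_{xx}^2\|x_n-x_{n-1}\|^2+2L_{xy}^2\|y_n(\lambda)-y_{n-1}\|^2$ via Assumption~\ref{asmp-0}(ii) (the $\Phi_n^y$ term is absent because $\Phi$ is linear in $y$) and recalling that $\beta_n$ is fixed \emph{before} the search, I get that at least one of $2\lambda\tau_{n-1}L_{xx}^2/\xi>\kappa_{n-1}\beta_{n-1}/\beta_n$ or $2\lambda\tau_{n-1}L_{xy}^2/\xi>1/\beta_n$ must hold for all $i$, which is impossible since $\lambda=\tau\mu^i\to0$.

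For part (ii) I would set $y=y^\star$ in \eqref{key-ineq2}. Since $(x^\star,w^\star,y^\star)\in\widetilde{\Omega}$, the saddle inequality gives $\widetilde{J}(x_n,w_n,y^\star)=\widetilde{\cL}(x_n,w_n,y^\star)-\widetilde{\cL}(x^\star,w^\star,y^\star)\ge0$, while \eqref{linesearch-cond-sc} is precisely $B_n\ge0$. Hence $\beta_{n+1}A_{n+1}(y^\star)\le\beta_n A_n(y^\star)$, so $\{\beta_n A_n(y^\star)\}$ is nonincreasing, hence bounded by $\beta_1 A_1(y^\star)$. Reading off the three nonnegative terms of $\beta_n A_n(y^\star)$ from \eqref{def:AnyBn}, the middle one bounds $\{y_n\}$; since $\beta_n\ge\beta_0>0$ (monotonicity established below) the first one bounds $\{z_n\}$; and $x_n=\frac{\psi}{\psi-1}z_{n+1}-\frac{1}{\psi-1}z_n$ then bounds $\{x_n\}$.

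For part (iii) I would first rewrite the multiplier in \eqref{beta-gstrong}: a direct computation gives $1+\gamma\rho_n\tau_{n-1}=\frac{\psi(1+\gamma\tau_{n-1})}{\psi+\varphi\gamma\tau_{n-1}}$, which exceeds $1$ precisely because $\psi>\varphi$ (from $(\psi,\xi,\varphi)\in\Theta_{sc}$); thus $\beta_n$ is strictly increasing and $\beta_n\ge\beta_0>0$. I split the claim into (a) $\sqrt{\beta_n}\tau_n\ge c_1$ and (b) $\beta_n\ge c_2 n^2$, and show (b) follows from (a): using $\tau_{n-1}\ge c_1/\sqrt{\beta_{n-1}}$ and $\tau_{n-1}\le\tau_{\max}$ in the factor above yields $\beta_n\ge\beta_{n-1}+d\sqrt{\beta_{n-1}}$ with $d:=(\psi-\varphi)\gamma c_1/(\psi+\varphi\gamma\tau_{\max})>0$; since also $\beta_n/\beta_{n-1}\le R:=1+(\psi-\varphi)\gamma\tau_{\max}/\psi$, one has $\sqrt{\beta_n}-\sqrt{\beta_{n-1}}=(\beta_n-\beta_{n-1})/(\sqrt{\beta_n}+\sqrt{\beta_{n-1}})\ge d/(1+\sqrt{R})>0$, whence $\sqrt{\beta_n}\ge\frac{d}{1+\sqrt{R}}\,n$ and $\beta_n\ge c_2 n^2$.

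The main obstacle is (a). The starting point is the identity $\beta_n\tau_n^2=\frac{\psi(1+\gamma\tau_{n-1})}{\psi+\varphi\gamma\tau_{n-1}}\,\delta_n^2\,\beta_{n-1}\tau_{n-1}^2\ge\delta_n^2\,\beta_{n-1}\tau_{n-1}^2$, so $\Lambda_n:=\beta_n\tau_n^2$ can decrease only through $\delta_n<1$. As in Lemma~\ref{lem_bound}(iii)--(iv), boundedness and local Lipschitz continuity show \eqref{linesearch-cond-sc} is guaranteed once $\tau_n\le\underline{\tau}_n:=\min\{\xi\kappa_{n-1}\beta_{n-1}/(2L_{xx}^2\tau_{n-1}\beta_n),\ \xi/(2L_{xy}^2\tau_{n-1}\beta_n)\}$, a quantity using only data fixed before $\tau_n$. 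Hence whenever backtracking occurs one has $\tau_n>\mu\underline{\tau}_n$, and multiplying by $\beta_n\tau_{n-1}$ gives $\beta_n\tau_n\tau_{n-1}>\min\{\mu\xi\kappa_{n-1}\beta_{n-1}/(2L_{xx}^2),\ \mu\xi/(2L_{xy}^2)\}$. I would then combine three cases — $\tau_n$ capped at $\tau_{\max}$ (so $\Lambda_n\ge\beta_0\tau_{\max}^2$), $\delta_n\ge1$ (so $\Lambda_n\ge\Lambda_{n-1}$), and a genuine backtracking step (where the displayed bound on $\beta_n\tau_n\tau_{n-1}$, with $\kappa_{n-1}\ge\gamma\tau_{n-1}$ and $\beta_{n-1}\ge\beta_0$, keeps $\Lambda_n$ from collapsing) — to conclude $\Lambda_n\ge c_1^2$ for all $n$. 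I expect the book-keeping in the backtracking case to require the most care: one must couple the per-step contraction $\delta_n^2$ with the threshold lower bound so that $\beta_n\tau_n^2$ is provably bounded away from $0$ uniformly in $n$, rather than merely decreasing slowly.
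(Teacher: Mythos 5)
Parts (i) and (ii) of your proposal coincide with the paper's argument almost line for line: the contradiction via $\lambda=\tau\mu^i\to 0$ for termination, and the monotonicity $\beta_{n+1}A_{n+1}(y^\star)\le\beta_nA_n(y^\star)$ (from $\widetilde{J}\ge0$, $B_n\ge0$) to bound $\{z_n\},\{y_n\},\{x_n\}$. Your derivation of $\beta_n\ge c_2n^2$ from $\sqrt{\beta_n}\tau_n\ge c_1$ also matches the paper's recursion $\beta_{n+1}\ge\beta_n+\varrho\sqrt{\beta_n}$; your telescoping of $\sqrt{\beta_n}-\sqrt{\beta_{n-1}}$ is a clean (and valid) alternative to the paper's induction.

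The genuine gap is exactly where you flag it: the lower bound $\sqrt{\beta_n}\tau_n\ge c_1$ in the backtracking case governed by the $L_{xy}$ branch of the threshold. There the linesearch only yields $\beta_n\tau_n\tau_{n-1}>\mu\Upsilon_2$ with $\Upsilon_2=\xi/(2L_{xy}^2)$, and your plan to convert this into a bound on $\Lambda_n=\beta_n\tau_n^2$ requires $\Lambda_n=\delta_n\cdot\beta_n\tau_n\tau_{n-1}>\delta_n\mu\Upsilon_2$ — which collapses precisely when $\delta_n$ is small, i.e.\ exactly in the case you are trying to control. Coupling ``the per-step contraction $\delta_n^2$ with the threshold'' does not close this, because nothing yet prevents repeated deep backtracking from driving $\Lambda_n$ down geometrically. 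The paper's device is to never ask for a lower bound on $\delta_n$ at all: it applies the dichotomy at index $n+1$, so the $L_{xy}$ branch reads $\beta_{n+1}\tau_{n+1}\tau_n>\mu\Upsilon_2$, and then uses the \emph{a priori upper} bound $\tau_{n+1}\le\varphi\tau_n$ to get $\beta_{n+1}\tau_n^2\ge\beta_{n+1}\tau_{n+1}\tau_n/\varphi>\mu\Upsilon_2/\varphi$, followed by $\beta_n\tau_n^2=\beta_{n+1}\tau_n^2/(1+\gamma\rho_{n+1}\tau_n)\ge\mu\Upsilon_2/(\varphi\varsigma)$ with $\varsigma:=1+\gamma\rho(\tau_{\max})\tau_{\max}$ from \eqref{beta-gstrong}. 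Similarly, in the $L_{xx}$ branch the paper keeps $\kappa_{n-1}\ge\omega\delta_{n-1}=\omega\tau_{n-1}/\tau_{n-2}$ (rather than your $\kappa_{n-1}\ge\gamma\tau_{n-1}$) together with $\beta_n\le\varsigma\beta_{n-1}$ and $\beta_{n-1}\ge\beta_0$ to shape the threshold as $\Upsilon_1/\sqrt{\beta_n}$, so that branch delivers $\sqrt{\beta_n}\tau_n>\mu\Upsilon_1$ directly. You should restructure your case analysis around this one-step-ahead trick; the three cases you list (cap at $\tau_{\max}$, $\delta_n\ge1$, backtracking) are otherwise fine but do not by themselves yield a uniform constant.
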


Now, we are ready to present the pointwise convergence and ergodic $\cO(1/N^2)$ convergence rate results of Algorithm \ref{algo1-sc}.
Note that, since $g$ is strongly convex,  ${x^{\star}}$ is unique.
\begin{thm}[Convergence results]
  \label{thm:gstrong}
 Let $\{(z_n,x_n,w_n,y_n, \beta_n, \tau_n): n\geq 1\}$ be the sequence generated by Algorithm \ref{algo1-sc}.
Then, we have
(i)   there exist constants $C_1>0$ and $C_2 > 0$ such that
$\|z_{n+1}-{x^{\star}}\| \leq C_1/n$ and  $\|x_{n+1}-{x^{\star}}\| \leq C_2/n$ for any $n\geq 1$;
and (ii) there exists a constant $C_3 >0$ such that
$
|F({\hat x}_N,{\hat w}_N)-F({x^{\star}},{w^{\star}})| \leq {C_3 / N^2}$ and
$
\|H({\hat x}_N)-{\hat w}_N\| \leq   {(2C_3/c) / N^2}$ for any $N\geq 1$,
where $c > 0$ is a constant satisfying $c \geq 2\|{y^{\star}}\|$, and ${\hat x}_N$ and ${\hat w}_N$ are defined as
\be\label{x_n}
{\hat x}_N=\frac{1}{s_N}\sum_{n=1}^N \beta_{n}\tau_{n}x_n
\text{~~and~~}
{\hat w}_N=\frac{1}{s_N}\sum_{n=1}^N \beta_{n}\tau_{n}w_n
\text{~~with~~}
s_N=\sum_{n=1}^N \beta_{n}\tau_{n}.
\ee
\end{thm}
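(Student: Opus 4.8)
The plan is to leverage the one-step descent inequality \eqref{key-ineq2} together with Lemma \ref{lem-beta-sc}, and the crucial observation that the linesearch condition \eqref{linesearch-cond-sc} is exactly the statement $B_n \geq 0$, where $B_n$ is defined in \eqref{def:AnyBn}. For part (i) I would first specialize \eqref{key-ineq2} to $y = {y^{\star}}$. Since $({x^{\star}},{w^{\star}},{y^{\star}})$ is a saddle point of $\widetilde{\cL}(\cdot)$, the right inequality in the saddle point definition gives $\widetilde{J}(x_n,w_n,{y^{\star}}) = \widetilde{\cL}(x_n,w_n,{y^{\star}}) - \widetilde{\cL}({x^{\star}},{w^{\star}},{y^{\star}}) \geq 0$, and combined with $\beta_n B_n \geq 0$ this reduces \eqref{key-ineq2} to the monotonicity $\beta_{n+1}A_{n+1}({y^{\star}}) \leq \beta_n A_n({y^{\star}})$. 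Hence $\beta_n A_n({y^{\star}}) \leq \beta_1 A_1({y^{\star}})$ for all $n$. Dropping the nonnegative $y$- and $x$-difference terms in $A_n({y^{\star}})$ and using $\gamma\tau_n \geq 0$, $\psi>1$, I obtain $\frac{\psi}{2(\psi-1)}\beta_n\|z_{n+1}-{x^{\star}}\|^2 \leq \beta_1 A_1({y^{\star}})$; the bound $\beta_n \geq c_2 n^2$ from Lemma \ref{lem-beta-sc}(iii) then yields $\|z_{n+1}-{x^{\star}}\| \leq C_1/n$. The estimate for $x_{n+1}$ follows by writing $x_{n+1} = \frac{\psi}{\psi-1}z_{n+2} - \frac{1}{\psi-1}z_{n+1}$ (from \eqref{x_updating}) and applying the triangle inequality, so that $\|x_{n+1}-{x^{\star}}\| \leq \frac{\psi}{\psi-1}\|z_{n+2}-{x^{\star}}\| + \frac{1}{\psi-1}\|z_{n+1}-{x^{\star}}\|$ is $O(1/n)$, giving $C_2$.

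For part (ii) I would instead keep $y$ generic, sum \eqref{key-ineq2} over $n=1,\ldots,N$, and telescope the $\beta_n A_n(y)$ terms. Using $\beta_{N+1}A_{N+1}(y) \geq 0$ and $B_n \geq 0$, this gives $\sum_{n=1}^N \beta_n\tau_n \widetilde{J}(x_n,w_n,y) \leq \beta_1 A_1(y)$. Since $\widetilde{J}(\cdot,\cdot,y)$ is jointly convex in $(x,w)$, Jensen's inequality applied to the weighted averages $\hat x_N,\hat w_N$ in \eqref{x_n} yields $\widetilde{J}(\hat x_N,\hat w_N,y) \leq \beta_1 A_1(y)/s_N$. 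The essential quantitative input is the lower bound on $s_N$: from Lemma \ref{lem-beta-sc}(iii), $\beta_n\tau_n = \sqrt{\beta_n}\cdot(\sqrt{\beta_n}\tau_n) \geq c_1\sqrt{\beta_n} \geq c_1\sqrt{c_2}\,n$, whence $s_N = \sum_{n=1}^N \beta_n\tau_n \geq \frac{c_1\sqrt{c_2}}{2}N^2$. This is precisely where the quadratic speedup enters, as opposed to the linear growth used in Theorem \ref{thm:scr}.

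The remainder follows the argument of Theorem \ref{thm:scr} almost verbatim, now with the $N^2$ denominator. Recalling from \eqref{def:Js} that $\widetilde{J}(\hat x_N,\hat w_N,y) = F(\hat x_N,\hat w_N) + \langle y, H(\hat x_N)-\hat w_N\rangle - F({x^{\star}},{w^{\star}})$, I would take the supremum over $\|y\| \leq c$. Setting $\bar C_3 := \sup\{\beta_1 A_1(y) : \|y\| \leq c\}$ (finite because $A_1(y)$ is quadratic in $y$ and the ball is compact) and $C_3 := 2\bar C_3/(c_1\sqrt{c_2})$, the supremum on the left equals $F(\hat x_N,\hat w_N) + c\|H(\hat x_N)-\hat w_N\| - F({x^{\star}},{w^{\star}})$, producing the key inequality $F(\hat x_N,\hat w_N) + c\|H(\hat x_N)-\hat w_N\| - F({x^{\star}},{w^{\star}}) \leq C_3/N^2$. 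This immediately gives $F(\hat x_N,\hat w_N)-F({x^{\star}},{w^{\star}}) \leq C_3/N^2$; conversely, the saddle point inequality $\widetilde{\cL}({x^{\star}},{w^{\star}},{y^{\star}}) \leq \widetilde{\cL}(\hat x_N,\hat w_N,{y^{\star}})$ together with $H({x^{\star}})={w^{\star}}$ and $\|{y^{\star}}\| \leq c/2$ yields $F({x^{\star}},{w^{\star}})-F(\hat x_N,\hat w_N) \leq (c/2)\|H(\hat x_N)-\hat w_N\|$, and substituting this back isolates $\|H(\hat x_N)-\hat w_N\| \leq (2C_3/c)/N^2$ and then $|F(\hat x_N,\hat w_N)-F({x^{\star}},{w^{\star}})| \leq C_3/N^2$. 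I expect the only genuinely delicate point to be confirming that $B_n \geq 0$ is exactly the linesearch rule \eqref{linesearch-cond-sc} and that the specialization $y={y^{\star}}$ makes $\widetilde{J} \geq 0$; once these are in hand the theorem is an assembly of the already-proven Lemma \ref{lem-beta-sc} with the telescoping and sup-duality mechanics of Theorem \ref{thm:scr}.
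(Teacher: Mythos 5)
Your proposal is correct and follows essentially the same route as the paper: use that the linesearch rule \eqref{linesearch-cond-sc} is exactly $B_n\ge 0$, drop $B_n$ in \eqref{key-ineq2} and telescope, specialize to $y={y^{\star}}$ with $\widetilde{J}(x_n,w_n,{y^{\star}})\ge 0$ and $\beta_n\ge c_2 n^2$ for part (i), and for part (ii) apply Jensen's inequality and the supremum over $\|y\|\le c$ exactly as in Theorem \ref{thm:scr}. The only cosmetic difference is your lower bound $s_N\ge c_1\sqrt{c_2}\,N(N+1)/2$, obtained directly from both conclusions of Lemma \ref{lem-beta-sc}(iii), whereas the paper telescopes $\beta_n\tau_n\ge(\beta_{n+1}-\beta_n)/\gamma$ via \eqref{beta-gstrong}; both yield the required $\mathcal{O}(N^2)$ growth of $s_N$.
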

\begin{proof}
It follows from \eqref{linesearch-cond-sc} and \eqref{def:AnyBn} that $B_n\geq0$ for any $n\geq 1$.
Let $y\in\R^p$ be arbitrarily fixed.
By dropping $B_n$ on the right-hand-side of \eqref{key-ineq2} and taking a sum over $n = 1, \ldots, N$, we obtain
\be\label{beta-A}
\beta_{N+1}A_{N+1}(y)+\sum_{n=1}^N\beta_{n}\tau_{n} \widetilde{J}(x_n,w_n,y) \leq \beta_{1}A_{1}(y).
\ee
Recall that $\widetilde{J}(x,w,y)$ is defined in \eqref{def:Js} and $\widetilde{J}(x_n,w_n,{y^{\star}})$ is always nonnegative.
Then,
  we can infer from \eqref{beta-A} and the definition of $A_n(y)$ in \eqref{def:AnyBn} that
\be
\|z_{N+2}-{x^{\star}}\|^2 \leq \frac{2(\psi-1)}{\psi +  \gamma \tau_{N+1}}
{\beta_{1}A_{1}({y^{\star}}) \over \beta_{N+1}} \leq {2\beta_{1}A_{1}({y^{\star}}) \over \beta_{N+1}}.
\label{z-N}
\ee
Consequently, it follows from (\ref{z-N})  and item (iii)  of Lemma \ref{lem-beta-sc} that $\|z_{N+2}-{x^{\star}}\| \leq C_1/(N+1)$ with $C_1 := \sqrt{2\beta_1A_1({y^{\star}})/c_2} > 0$. Since $z_{N+2}=\frac{\psi-1}{\psi} x_{N+1} + \frac{1}{\psi}z_{N+1}$,
we derive $\|x_{N+1}-{x^{\star}}\| \leq C_2/N$ for some $C_2 >0$.
Hence, property (i) holds.

On the other hand, $\widetilde{J}(x,w,y)$  is convex with respect to $(x,w)$ for any $y$.
After discarding the nonnegative term $\beta_{N+1}A_{N+1}(y)$ from (\ref{beta-A}), we can utilize (\ref{x_n}) and Jessen's inequality to obtain
\be
\widetilde{J}({\hat x}_N,{\hat w}_N,y) \leq   {1\over  s_N}\sum_{n=1}^N\beta_{n}\tau_{n} \widetilde{J}(x_n,w_n,y) \leq { \beta_{1}A_{1}(y) \over s_N}.\label{G-XY}
\ee
Furthermore, it follows from 
\eqref{beta-gstrong} that
$\beta_n \tau_n = (\beta_{n+1} - \beta_n) / (\rho_{n+1} \gamma) \ge (\beta_{n+1} - \beta_n) / \gamma$.
Hence, we have
$s_N =\sum_{n=1}^N \beta_{n}\tau_{n} \ge (\beta_{N+1} - \beta_1) / \gamma \ge c_3 N^2$ for some $c_3 > 0$,
since  $\beta_{N+1} \ge c_2 (N+1)^2$ by item (iii)  of Lemma \ref{lem-beta-sc}.
Consequently, it follows from (\ref{G-XY}) that
$$
\widetilde{J}({\hat x}_N,{\hat w}_N,y) =F({\hat x}_N,{\hat w}_N)+\langle y, H( {\hat x}_N)-{\hat w}_N\rangle-F({x^{\star}},{w^{\star}})\le (\beta_1/c_3) A_1(y)/ N^{2}.
$$
By taking supremum on both sides of the above inequality over $\|y\|\leq c$, we obtain
\ben
F({\hat x}_N,{\hat w}_N)+c\|H( {\hat x}_N)-{\hat w}_N\|-F({x^{\star}},{w^{\star}})\leq C_3/N^2,
\een
where $C_3= \sup_y\{(\beta_1/c_3)A_1(y): \|y\| \leq c\} > 0$.
Since we have $c \geq 2\|{y^{\star}}\|$, we can use similar reasoning as in Theorem \ref{thm:scr} to establish the validity of property (ii).
\end{proof}

\section{A Fully Adaptive Proximal Gradient Method}
\label{sec:adaptive}
This section is devoted to the special case \eqref{pd_gf}, which fits in \eqref{saddle_point} with $\Phi(x,y) = \langle H(x),y\rangle$ and $f^*(y) = \iota_{\mathds{1}/p }(y)$.
In this case, we have $\nabla_x \Phi(x,y) = H'(x)^\top y$ with $H'(x) = [\nabla h_1(x), \ldots, \nabla h_p(x)]^\top$, $\nabla_y \Phi(x,y) = H(x)$ and $L_{yy}=0$.
Recall that $h := {1\over p}\sum_{i=1}^{p}h_i$ as defined in  \eqref{pd_gf}.
Applying Algorithm \ref{algo1} to problem \eqref{pd_gf}, we have $y_n \equiv \mathds{1}/p\in \bR^p$ for all $n\geq 1$.
As a result, we have $\nabla_x \Phi(x_{n},y_{n}) =   \nabla h(x_{n})$ for any $n\geq 1$, $\theta_n =  \nabla h(x_n)-\nabla h(x_{n-1})$ and
$\Phi_n^y\equiv 0$ for all $n$ (see definitions of $\theta_n$ and $\Phi_n^y$ in \eqref{def:q-La} and \eqref{def:phi-y}).
Recall that $\delta_{n-1} = \tau_{n-1}/\tau_{n-2}$.
Therefore, the linesearch condition \eqref{linesearch-cond1}
can be equivalently rewritten as
\[
\tau_n \le \frac{\xi (\nu r_n + (1-\nu)c_n)}{\tau_{n-1}},
\]
where $r_n$ and $c_n$ are defined in  \eqref{linesearch-cond1}.
Setting $\eta =0$, we have $c_n=0$ for all $n$ and the above condition will reduce to
\ben
\tau_n\leq\frac{\nu \xi\omega }{\tau_{n-2}} \cdot \frac{\|x_{n}-x_{n-1}\|^2}{\|\nabla h(x_n)-\nabla h(x_{n-1})\|^2}.
\een
As a result,  by setting parameter $\eta=0$, Algorithm \ref{algo1} reduces to the following
fully adaptive proximal gradient method with convex combination (aPGMc) given below.

\vskip5mm
\hrule\vskip2mm
\begin{algo}
[A fully adaptive PGM with convex combination (aPGMc) for \eqref{pd_gf}]\label{algo2}
{~}\vskip 1pt {\rm
\begin{description}
\item[{\em Step 0.}] Choose $\psi\in(1,1+\sqrt{3})$, $\nu\in(0, 1)$,  $(\xi,\varphi)\in \Theta_\psi$, $\tau_{\max}>0$ and $\mu\in(0,1)$. Choose $x_0 \in \dom(g)$ and $\tau_0 \in (0, \tau_{\max}]$.
    Set $z_{0}=x_0$, $\omega:= \omega(\xi,\varphi)$, $\tau_{-1}=\tau_0$   and $n=1$.

\item[{\em Step 1.}] Compute
\ben
  z_{n}=\frac{\psi-1}{\psi} x_{n-1} + \frac{1}{\psi}z_{n-1},~~
 x_{n}=\prox_{\tau_{n-1} g}(z_{n}-\tau_{n-1} \nabla h(x_{n-1})).
\een

\item[{\em Step 2.}] Set
$\tau_n=\min\Big\{ \varphi\tau_{n-1},~ \frac{\nu  \xi  \omega}{\tau_{n-2}}\cdot \frac{\|x_{n}-x_{n-1}\|^2}{\|\nabla h(x_n)-\nabla h(x_{n-1})\|^2},~ \tau_{\max}\Big\}$,   $n \leftarrow n+1$
and go to Step 1.
\end{description}
}
\end{algo}
\vskip1mm\hrule\vskip5mm




Define ${\bar \omega} := {\bar \omega}(\xi,\varphi) := \xi\omega = \xi \omega(\xi,\varphi)=\xi \big(2\psi-\xi -\frac{\psi^3\varphi }{1+\psi}\big)$  and recall that the interested parameters are restricted by $\psi\in (1,1+\sqrt{3})$, $\varphi>1$ and $\xi>0$.
From Step 2 of Algorithm \ref{algo2},  aside from the value of $\varphi$, larger ${\bar \omega}$ likely results to  larger stepsize $\tau_n$. Apparently,  ${\bar \omega}(\xi,\varphi)$ is monotonically decreasing with respect to $\varphi$.
On the other hand,  ${\bar \omega}(\xi,\varphi)$ attains its maximum value at $\xi=\psi-\frac{\psi^3\varphi}{2(1+\psi)}$ for any $\psi$ and $\varphi$.
Practically, it is not desirable to choose $\varphi$ very close to $1$ since that will hinder the increase of stepsize.
For any given $\psi\in(1,1+\sqrt{3})$, our strategy is to choose $\varphi$ first, followed by setting $\xi=\psi-\frac{\psi^3\varphi}{2(1+\psi)}$.
The relation  between $\psi$ and $\varphi$ determined by $\xi=\psi-\frac{\psi^3\varphi}{2(1+\psi)}$ with a sample of different choices of $\xi$ are drawn in Figure \ref{Fig psi}.

\begin{figure}[htp]
\center
\includegraphics[width=0.6\textwidth]{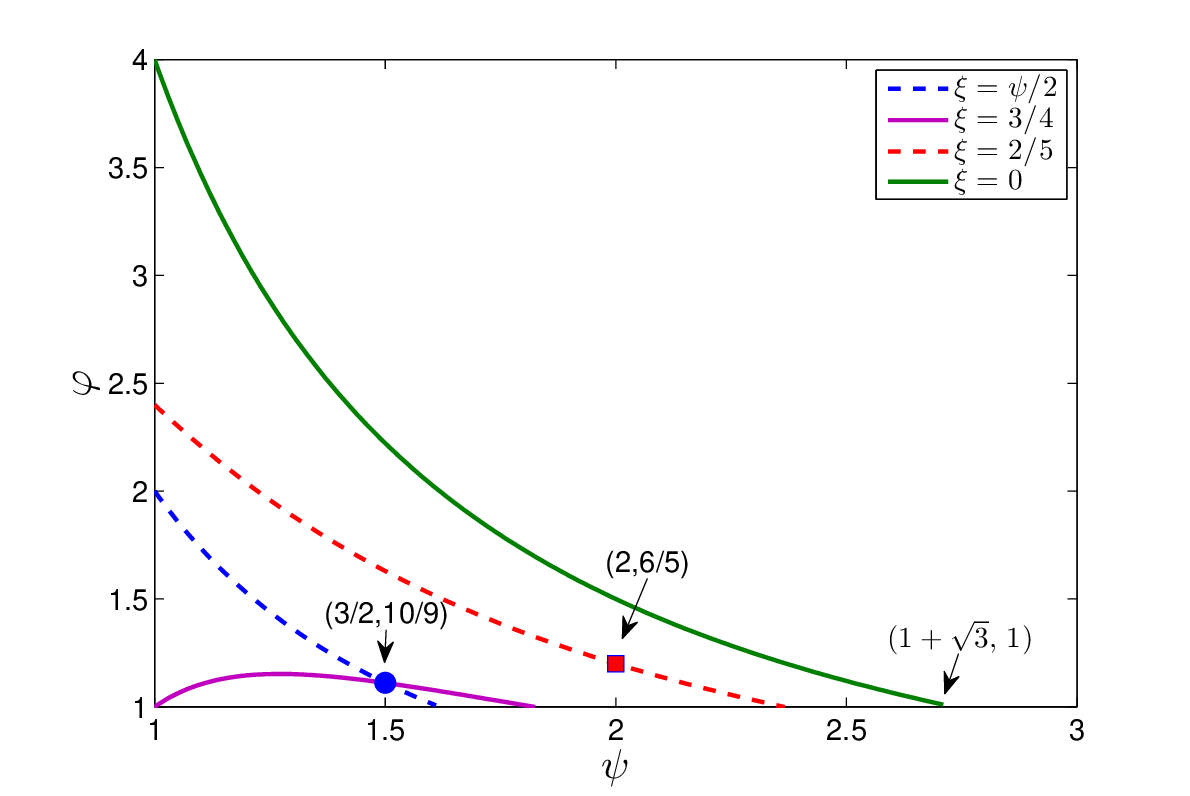}
\caption{The relation between $\psi$ and $\varphi$ determined by $\xi=\psi-\frac{\psi^3\varphi}{2(1+\psi)}$ with different values of $\xi$.} \label{Fig psi}
\end{figure}

In fact, our adaptive rule of choosing algorithmic parameters, i.e., the formula for $\tau_n$ in Step 2 of Algorithm \ref{algo2}, generalizes
that in \cite{Malitsky2019Golden}.
In particular, aside from the parameter $\nu\in(0,1)$,  the adaptive rule adopted in \cite[Algorithm 1]{Malitsky2019Golden} essentially corresponds to setting $\xi = {\psi /2}$ in the relation $\xi=\psi-\frac{\psi^3\varphi}{2(1+\psi)}$, which gives $\bar \omega = {\psi^2/ 4}$, see \cite[Eq. (21)]{Malitsky2019Golden}.
Moreover, the algorithm presented in \cite[Page 3]{Malitsky2019Golden} corresponds to $(\psi,\varphi)=({3/ 2}, {10/ 9})$
and $\xi = {\psi/ 2} = {3/ 4}$, which gives  $\bar \omega = {9/ 16}$.

In our numerical experiments, we first set $(\psi,\varphi)=(2, {6/ 5})$ and then compute $\xi = \psi-\frac{\psi^3\varphi}{2(1+\psi)}=2/5$, which gives ${\bar \omega}(\xi,\varphi)= 4/25$. The resulting algorithm appears as
\ben
\left\{\ba{rcl}
\tau_n&=&\min\left\{\frac{6}{5}\tau_{n-1},~\frac{4\nu}{25\tau_{n-2}}\frac{ \|x_{n}-x_{n-1}\|^2}{\|\nabla h(x_{n})-\nabla h(x_{n-1})\|^2},~\tau_{\max}\right\}, \smallskip \\
 z_{n+1}&=& \big( x_{n} + z_{n}\big)/2,\smallskip  \\
 x_{n+1}&=&\prox_{\tau_{n} g}(z_{n+1}-\tau_{n} \nabla h(x_{n})),
\ea\right.
 \een
Compared with the algorithm in \cite[Page 3]{Malitsky2019Golden}, our choice of $\psi = 2$ results to larger weight  for the latest iterate $x_{n-1}$ in the convex combination. Furthermore, the choice of $\varphi=6/5$ allows the stepsize to increase at a faster rate, compared to  the algorithm in \cite[Page 3]{Malitsky2019Golden}, which sets the increasing factor to be $10/9$.
Experimental results show that our algorithm is more efficient, see Section \ref{sec-experiments}.


\section{Numerical Results}
\label{sec-experiments}
In this section, we provide numerical results from two examples to demonstrate the performance of the proposed Algorithms \ref{algo1} and \ref{algo1-sc}, which we will refer to as PDAc-L and aPDAc-L. The first example is a quadratic constrained quadratic programming (QCQP) problem, and the second is a sparse logistic regression (SLR) problem. We implemented our experiments in Matlab R2013b on a 64-bit Windows PC with an Intel(R) Core(TM) i5-4590 CPU@3.30 GHz and 8GB of RAM. The codes used in the experiments are available at \url{https://github.com/xkchang-opt/PDAc-Linesearch}.

\subsection{Implementation Detials}
As noted in \cite{Nesterov2013gradient}, rounding errors can occur when the objective function's value is used in linesearch, which may cause the termination criterion to fail when generating iterates with very close function values.
Note that, in the linesearch step of Algorithm \ref{algo1}, computing the difference $\Phi(x_n,y_{n-1})-\Phi(x_n,y_{n})$ is necessary to evaluate $\Phi_n^y$.
To enhance stability and eliminate the use of function values, we have incorporated an alternative termination criterion into the linesearch procedure by replacing $\Phi_n^y$ with $\widetilde{\Phi}_n^y :=\langle \nabla_y \Phi (x_n, y_{n-1})-\nabla_y \Phi (x_n, y_{n}),y_n-y_{n-1} \rangle$, as suggested in \cite{EYNS2021}.
Due to the concavity property of $\Phi(x,y)$ in $y$, we can derive $\Phi_n^y \leq \widetilde{\Phi}_n^y$
and $\Phi_n^y = \widetilde{\Phi}_n^y$ when function $\Phi (x, \cdot)$ is quadratic for fixed $x$,
which results in a more robust and reliable termination condition. From our observations, this modification makes the algorithm numerically more stable.

The performance of primal-dual algorithms is widely recognized to depend not only on the primal and dual step-sizes but also on their ratio, as has been established in previous studies like \cite{ChY2022relaxed,Sun2014A}.
If we define the dual stepsize as $\sigma_n :=\beta\tau_n$, then the ratio $\beta= \sigma_n/\tau_n$ can greatly affect the performance of the algorithm. It is important to note that a fixed ratio may not always produce the best numerical results, even though the convergence is assured for any $\beta>0$.
Choosing the optimal ratio $\beta$ is crucial for achieving good numerical performance in practice, and an adaptive approach that balances primal and dual feasibility violations can help achieve this. Our adaptive approach involves evaluating inexpensive distances and can be easily applied in various scenarios.

We will now outline our strategy for dynamically adjusting the ratio $\beta$. While we do not have a complete theory for this strategy, we have observed its effectiveness in various applications, as shown in Figure \ref{Fig adaptive}.
%
%
%
Many applications allow for the easy computation of the subdifferential of $g$ and $f^*$. Using this information, we can define the primal and dual infeasibilities (denoted respectively by $\verb"pinf"_n$ and $\verb"dinf"_n$) using $\partial f^*(y_n)$, $\partial g(x_n)$, $\nabla\Phi_y(x_n,y_n)$, and $-\nabla\Phi_x(x_n,y_n)$,  see \eqref{def:pdinf} for their definitions in the case of QCQP problems. With this approach, we can dynamically adjust the value of $\beta$ to ensure
that $\verb"pinf"_n$ and $\verb"dinf"_n$ are balanced.
%
%
In particular, we have adopted the following adaptive rule to determine the value of $\beta$:
\be\label{adaptive-beta}
\beta=\left\{\ba{ll} \max\{0.8 \beta \underline{\beta}\},~~& ~~\mbox{if}~~ r_n \leq 0.8 , \smallskip\\
\beta~~&~~\mbox{if}~~  r_n \in (0.8, 1.25), \smallskip \\
\min\{1.25 \beta, \overline{\beta}\}~~& ~~\mbox{if}~~  r_n \geq 1.25,
\ea\right.
\text{~~with~~}r_n := \texttt{pinf}_n/\texttt{dinf}_n,
\ee
where $\underline{\beta}$ and $\overline{\beta}$ are lower and upper bounds on $\beta$.
In our experiments, we have initialized $\beta = 1$, and set the bounds as $\underline{\beta} = 0.01$ and $\overline{\beta}=100$.

The parameters of Algorithms \ref{algo1} and \ref{algo1-sc} are specified as: $\psi=2$, $\varphi=6/5$, $\nu = 0.9$, $\mu=0.7$, $\xi = 2/5$, $M=5$ and $\eta = 0.9$.
Parameters $\tau_0$ and $\tau_{\max}$ are generated as in Remark \ref{rem_tau0} with $\chi=10^6$ to meet the requirement of Lemma \ref{lem_bound} (iv).

\subsection{Experiments on QCQP Problems}
In this subsection, we provide a comparison of PDAc-L (Algorithm \ref{algo1}) against two other algorithms, namely PDA with backtracking (denoted by PDB) \cite{EYNS2021} and the fully adaptive golden ratio algorithm (denoted by aGRAAL) \cite{Malitsky2019Golden} on a set of convex QCQP problems, for which data is randomly generated, as tested in \cite{EYNS2021}.
In particular, the tested problem  is given by
\be\label{QCQP}
\left\{\ba{rcl}
h_{\text{opt}}&:=&\min\limits_{x\in X} h(x):=\frac{1}{2}x^\top A_0 x + b^\top_0 x \\
&& \mbox{s.t.~~} h_j (x):=\frac{1}{2}x^\top A_j x + b^\top_j x - c_j \leq 0,~~~
j \in \{ 1,\ldots,m\},
\ea\right.
\ee
where $X := [-10,10]^n $,   and the problem data is generated randomly as follows: $\{b_j\}^m_{j=0} \subseteq \R^n$ with elements drawn from the standard Gaussian distribution, $\{c_j\}^m_{j=1} \subseteq \R$ with elements drawn from a uniform distribution over $[0, 1]$, and $A_j = \Lambda^\top_j S_j \Lambda_j$ for $j\in \{0,1,\ldots,m\}$. Here, $\Lambda_j \in \R^{n \times n}$ is a random orthonormal matrix, and $S_j \in \R^{n\times n}_+$ is a diagonal matrix whose diagonal elements are generated uniformly and randomly from the interval $[0, 100]$, with $0$ allowed as the diagonal element of $S_j$.
Define $H(x) := (h_1(x),\ldots,h_m(x))^\top$ and  $\Phi(x,y) :=  h(x) + \langle y, H(x)\rangle$ for $(x,y)\in\R^n\times\R^m$.
Then, \eqref{QCQP} can be represented as
$\min\nolimits_{x}\max\nolimits_{y} g(x)+\Phi(x,y)-f^*(y)$,
where $g(x)=\iota_{X}(x)$, the indicator function of $X$, and $f^*(y)=\iota_+(y)$, the indicator function of the nonnegative orthant.
Note that $\Phi(x,y)$ is linear with respect to $y$.
Let $H'(x)$ be the Jacobian matrix of $H(x)$.
 It is easy to derive that
$\nabla_x \Phi(x,y) = H'(x)^\top y+A_0 x+b_0$ and $\nabla_y \Phi(x,y) = H(x)$.
As pointed out in the second point of Remark \ref{rem_tau_max}, for PDAc-L applied to this problem, we have $\Phi_n^y \equiv 0$ and therefore checking the linesearch condition \eqref{linesearch-cond1} is computationally inexpensive.

It is easy to derive from  \eqref{y1_updating} and {\bf Fact 2.1} that $y_n\in\partial f(w_n)$ for any $n\geq 1$, which
implies that $w_n\in\partial f^*(y_n)$.
Recall that  $\widetilde{\cL}(\cdot)$ and $\widetilde{\Omega}$ are defined in \eqref{def:PhiL} and \eqref{def:Omegas}, respectively, for
problem \eqref{saddle-point-special}. It is thus reasonable to
define the primal and dual feasibility violations at the $n$th iteration respectively by
\be\label{def:pdinf}
\verb"pinf"_n := \|H(x_n)-w_n\|_1 \text{~~and~~}\verb"dinf"_n:= {\dist(-H'(x_n)^\top y_n- \nabla h(x_n),\, \partial g(x_n)) \over 1+\|x_n\|_1},
\ee
where $\dist(v,S)$ represents the distance from the vector $v$ to the set $S$ measured by the $\ell_1$-norm.
Then, the ratio $\beta$ was tuned adaptively according to  \eqref{adaptive-beta}.
We terminated PDB and aGRAAL via $\max\{e_{\text{obj}}(x_n), e_{\text{con}}(x_n)\} \leq \epsilon$, where
\be\label{stop-1}
e_{\text{obj}}(x) := |h(x)-h_{\text{opt}}|/|h_{\text{opt}}|
\text{~~and~~}
e_{\text{con}}(x) := \frac{1}{m}\sum^m\nolimits_{j=1} \max\{h_j (x),0\},
\ee
with   $h_{\text{opt}}$ being computed using MOSEK via CVX\footnote{Downloaded from http://cvxr.com/cvx/}.
To terminate PDAc-L, in addition to the condition $\max\{e_{\text{obj}}(x_n), e_{\text{con}}(x_n)\} \leq \epsilon$, we also require $\max\{\verb"pinf"_n, \verb"dinf"_n\}<\epsilon_{pd}$.
Furthermore, all the tested algorithms were terminated as well if a maximum number of iterations, named $n_{\max}$, was reached.
In this set of experiments, we set $\epsilon=10^{-8}$,  $\epsilon_{pd}=10^{-6}$ and $n_{\max}=5\times 10^4$.

To examine the effectiveness of PDAc-L with $\beta$ being fixed or adaptively tuned via \eqref{adaptive-beta},
we first ran experiments with $n=100$ and $m=10$.
The performance comparison results are presented in Figure \ref{Fig adaptive}.
%
%
%
%
As shown in Figure \ref{Fig adaptive}(a), the number of iterations used by PDAc-L initially decreases with increasing values of $\beta$. However, it then exhibits an overall increasing trend, with some fluctuations that may be attributed to the random nature of the test data.
In contrast, when $\beta$ was adaptively tuned using \eqref{adaptive-beta}, PDAc-L terminated after only 254 iterations, as shown in Figure \ref{Fig adaptive}(b). This is faster than PDAc-L using any fixed values of $\beta$.

\begin{figure}[htp]
\center
\subfigure[Number of iterations for fixed $\beta$.]
{\includegraphics[width=0.45\textwidth]{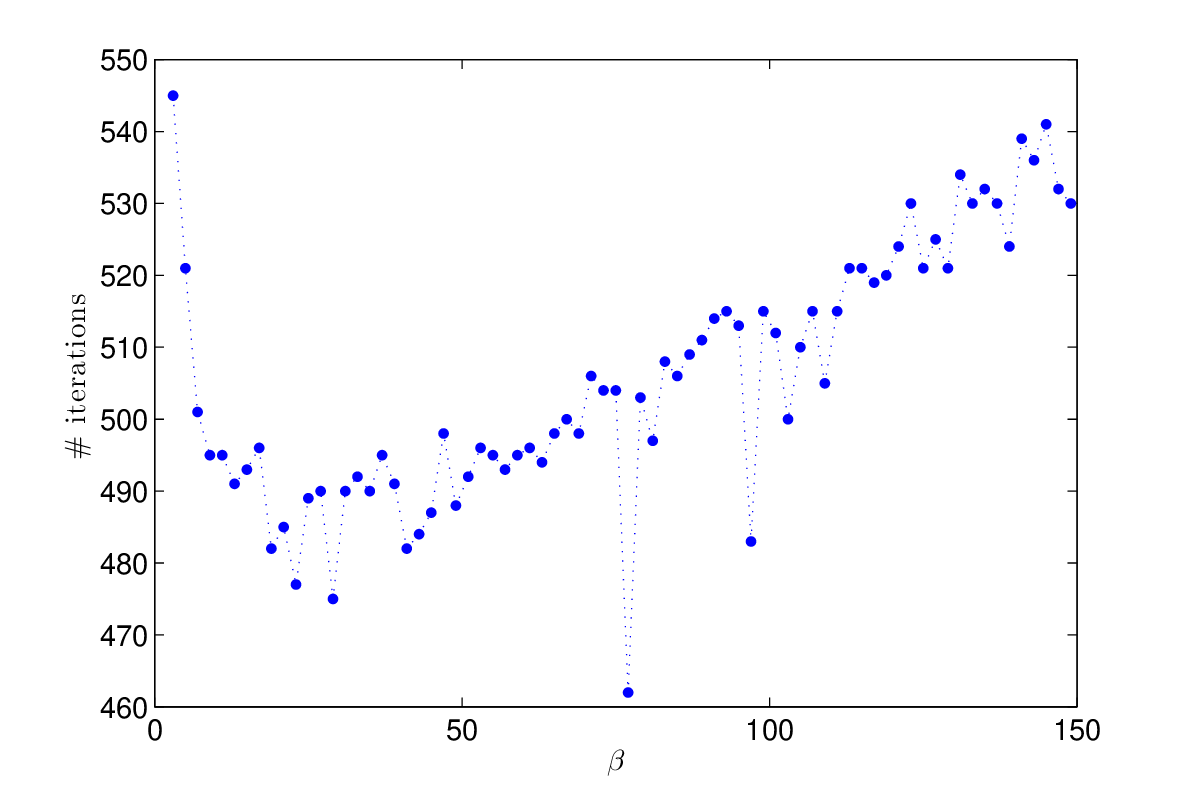}}
\subfigure[Adaptive rule v.s. $\beta=30$.]
{\includegraphics[width=0.45\textwidth]{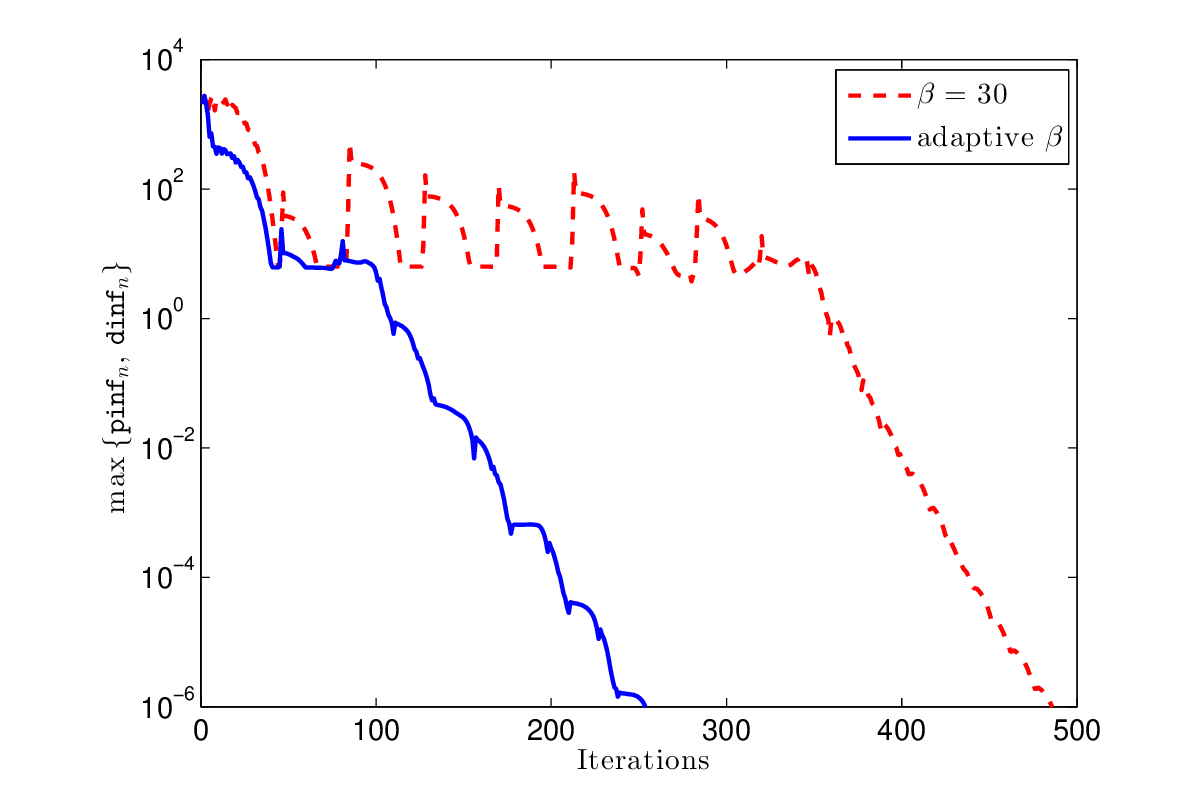}}
\caption{Performance comparison of PDAc-L with $\beta$ being constant or adaptively determined by \eqref{adaptive-beta}.
}\label{Fig adaptive}
\end{figure}

\begin{figure}[htp]
\center
\subfigure[$\beta=30$.]
{\includegraphics[width=0.45\textwidth]{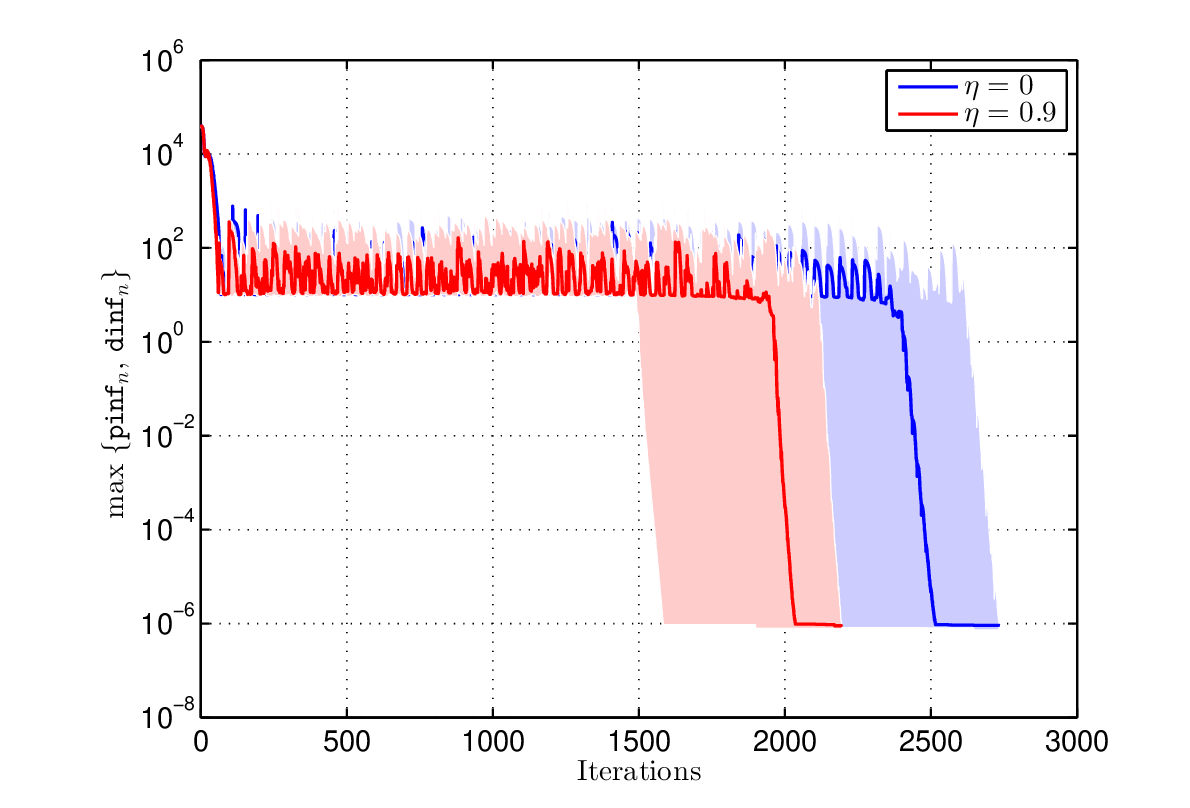}}
\subfigure[Adaptive $\beta$.]
{\includegraphics[width=0.45\textwidth]{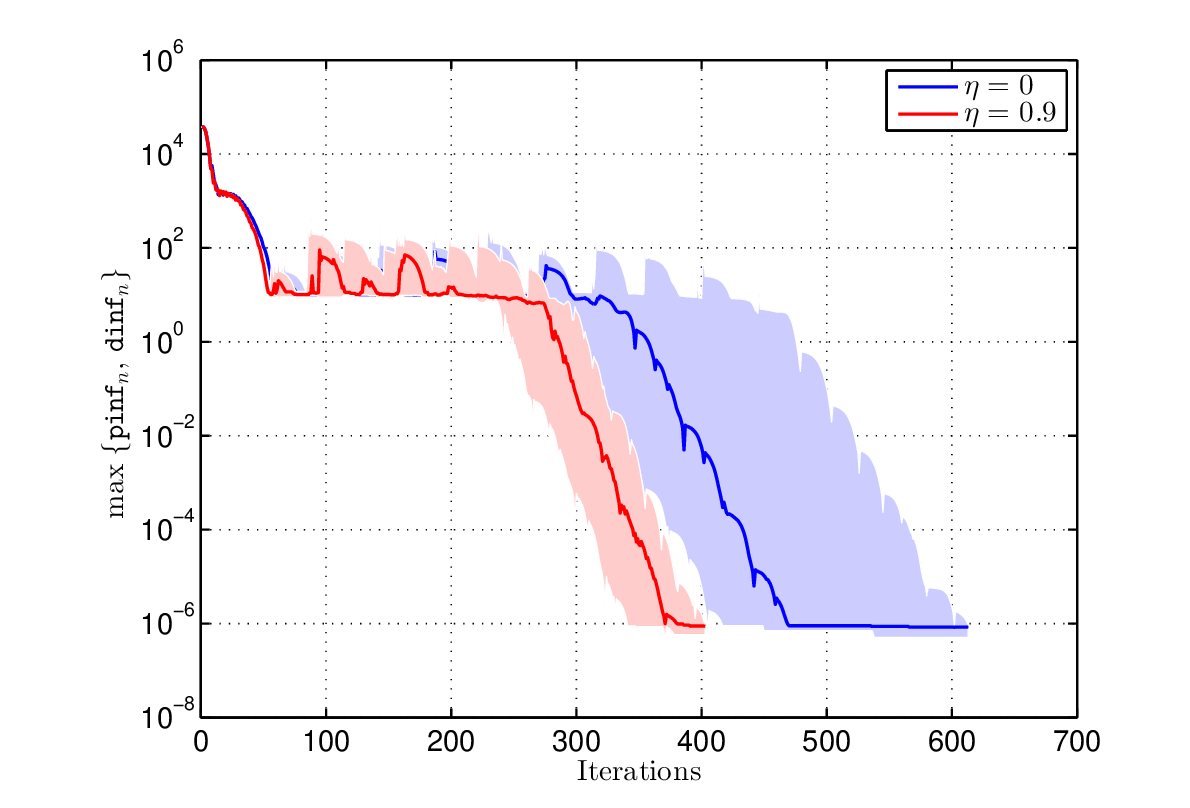}}
\caption{Performance comparison of PDAc-L with $\eta=0$ and $\eta=0.9$.
}\label{Fig eta}
\end{figure}

Additionally, we evaluated the impact of $\eta$ on the numerical performance of PDAc-L for 10 QCQP problems randomly generated with $n=500$ and $m=10$. Specifically, we considered fixed $\beta=30$ and adaptive $\beta$ tuning. The results are depicted in Figure \ref{Fig eta}, in which each solid line represents the median performance over the $10$ random instances, and the shaded area surrounding each line indicates the varying range of the corresponding values across the $10$ random instances. We observe that setting $\eta=0.9$ enhances the method's performance in comparison to the case where $\eta$ equals $0$. This is why we adopted $\eta=0.9$ for our experiments.

\begin{figure}[htp]
\center
\subfigure[\#LS consumed by PDB and PDAc-L.]{
\includegraphics[height=0.25\textwidth, width=0.6\textwidth]{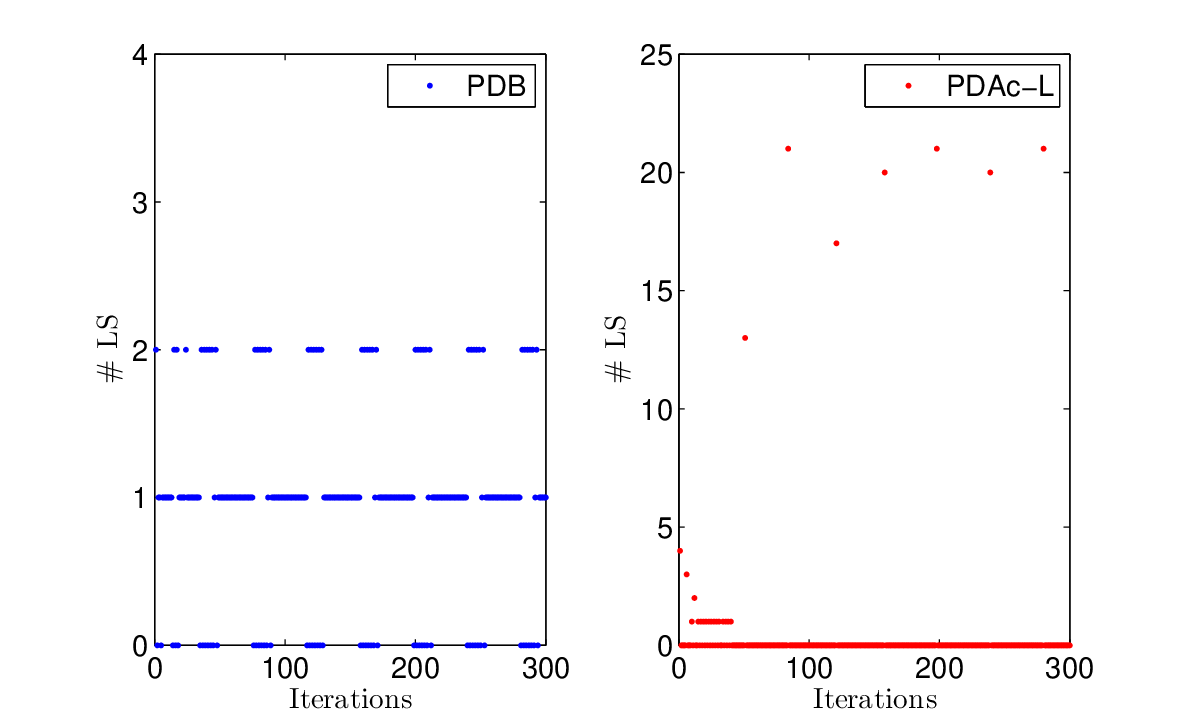}}
\hspace{-1cm}
\subfigure[Cumulative results of \#LS.]{
\includegraphics[height=0.25\textwidth, width=0.38\textwidth]{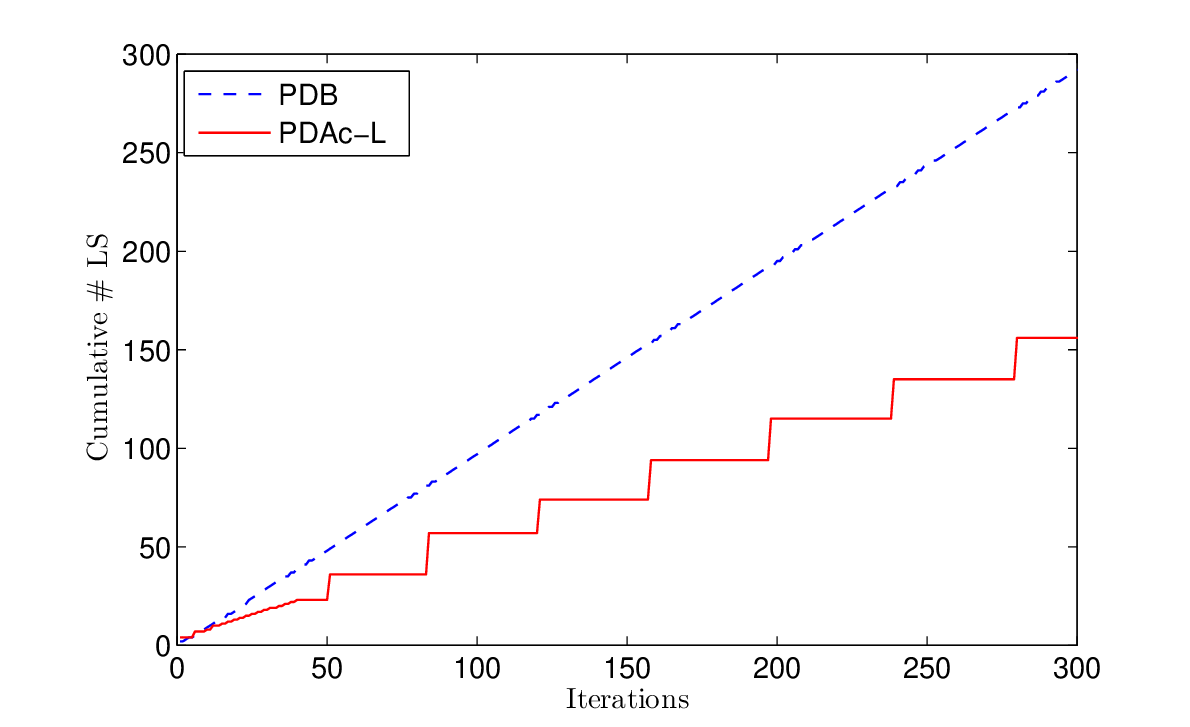}}
\caption{Details of linesearch for the problem \eqref{QCQP} with $n=500$ and $m=10$.
(a) Comparison results of extra linesearch trial steps taken by PDB and PDAc-L.
(b) Cumulative results for both algorithms.
}\label{Fig 50}
\end{figure}

To further investigate the linesearch step of PDB and PDAc-L, we solved \eqref{QCQP} with $n=500$ and $m=10$. The details of the linesearch are illustrated in Figure \ref{Fig 50}. As shown in Figure \ref{Fig 50}(a), PDB takes between $1$ and $2$ extra linesearch trials per iteration, while PDAc-L takes between $1$ and $23$ extra linesearch trials per iteration. However, as the majority of iterations in PDAc-L succeeded in the linesearch on the very first trial (i.e., no extra trial steps were needed), on average, PDAc-L requires much fewer extra linesearch steps than PDB, as shown by the cumulative results presented in Figure \ref{Fig 50}(b). Note that the results shown in Figure \ref{Fig 50} are only for the first $300$ iterations, but similar trends can be observed for more iterations.

Table \ref{table1} displays a comparative evaluation of aGRAAL, PDB, and PDAc-L, testing their performance on QCQP problems of varying dimensions. The table reports the number of iterations (\texttt{Iter}), total CPU time (\texttt{Time}, in seconds), and the number of extra linesearch trial steps (\#LS) required by PDAc-L and PDB algorithms. Note that aGRAAL does not require any linesearch.

\begin{table}[htpb]
\caption{Comparison results of aGRAAL, PDB and PDAc-L on the QCQP problems with different values of $(n,m)$.}
\label{table1}
\center
\small
\begin{tabular}{|c|c|rc|rcc|rcc|}
\hline
 &  & \multicolumn{2}{|c|}{aGRAAL }&\multicolumn{3}{|c|}{PDB}& \multicolumn{3}{|c|}{ PDAc-L}\\
$n$&$m$ &\texttt{Iter} &\texttt{Time} &\texttt{Iter}& \texttt{Time} &\#LS	&\texttt{Iter}& \texttt{Time} &\#LS\\
\hline
100&10  & 5092&       4.8&2777 &       4.2&2704&227&       0.1&105\\ 	
 100 & 30 & 9504&      26.4&6471 &      28.1&6312&1102&       1.0&552\\
 100 &  50 & 13760&      66.1&10646 &      81.7&10388&1958&       3.1&989\\
 \hline
 500&  10  &  6189&      36.5&2465 &      34.8&2402&391&       1.8&193  	\\
 500&  30  & 12834&     228.5&4543 &     190.5&4433&644&       8.6&318   	\\
 500&  50  &   19267	&  483.2  & 5210 &   317.9  & 5081  &  1315 &   23.6  &  657 	\\
\hline
1000&  10  & 8478	&  158.6 & 2488& 140.3 &2425 & 524 & 7.9  &  264 	\\
1000&  30  & 23750 & 1362.9  & 5877 &  976.3 & 5736 & 1141 &  51.3 &    580	\\
1000&  50  & 45604 & 4097.2 & 7909	& 2108.0  & 7643& 1824 &  141.9 &   914 	\\
 \hline
\end{tabular}
\end{table}

\begin{figure}[htbp]
\centering
\subfigure[]{
\includegraphics[width=0.46\textwidth]{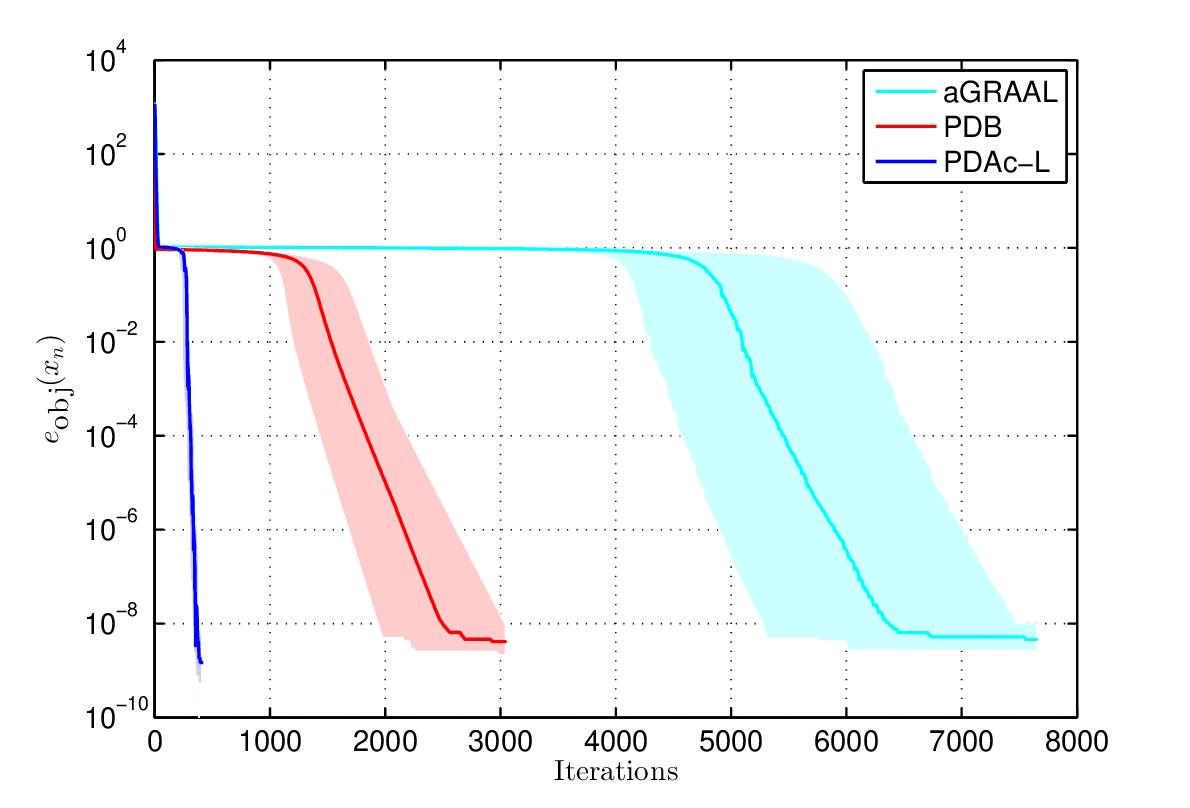}}
\subfigure[]{
\includegraphics[width=0.46\textwidth]{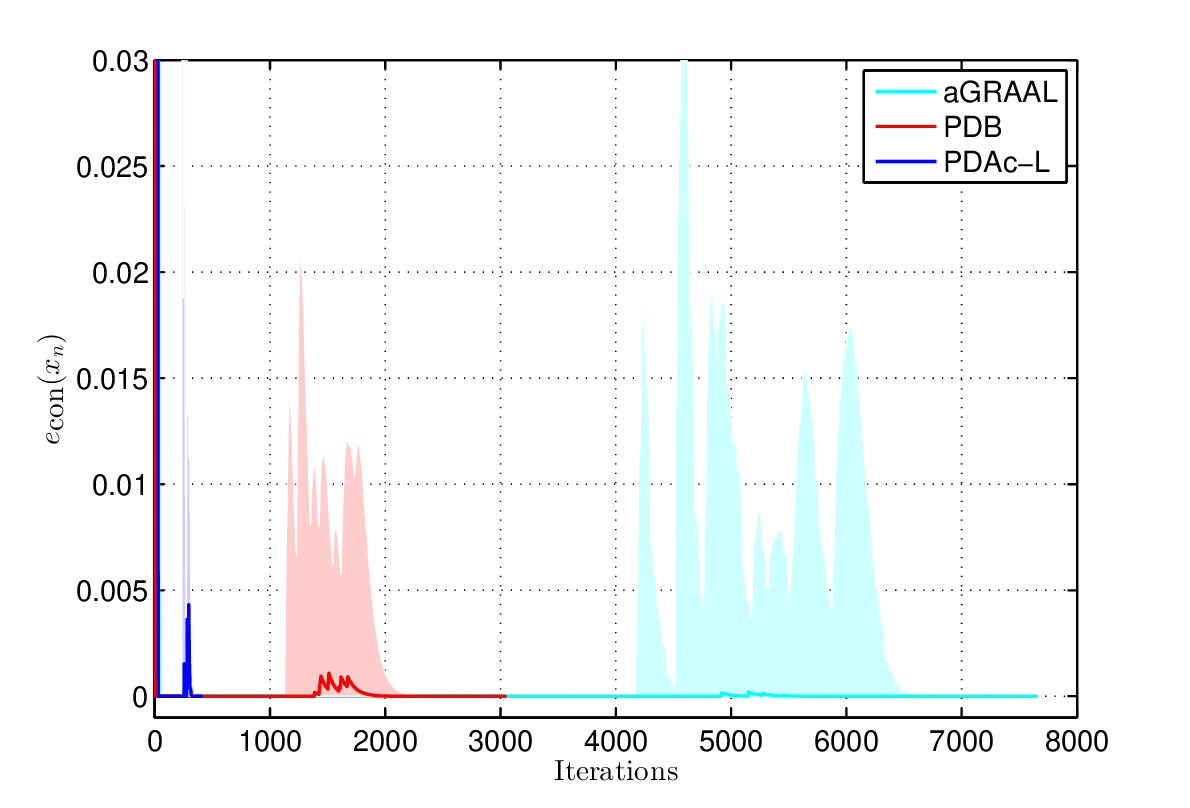}}
\subfigure[]{
\includegraphics[width=0.46\textwidth]{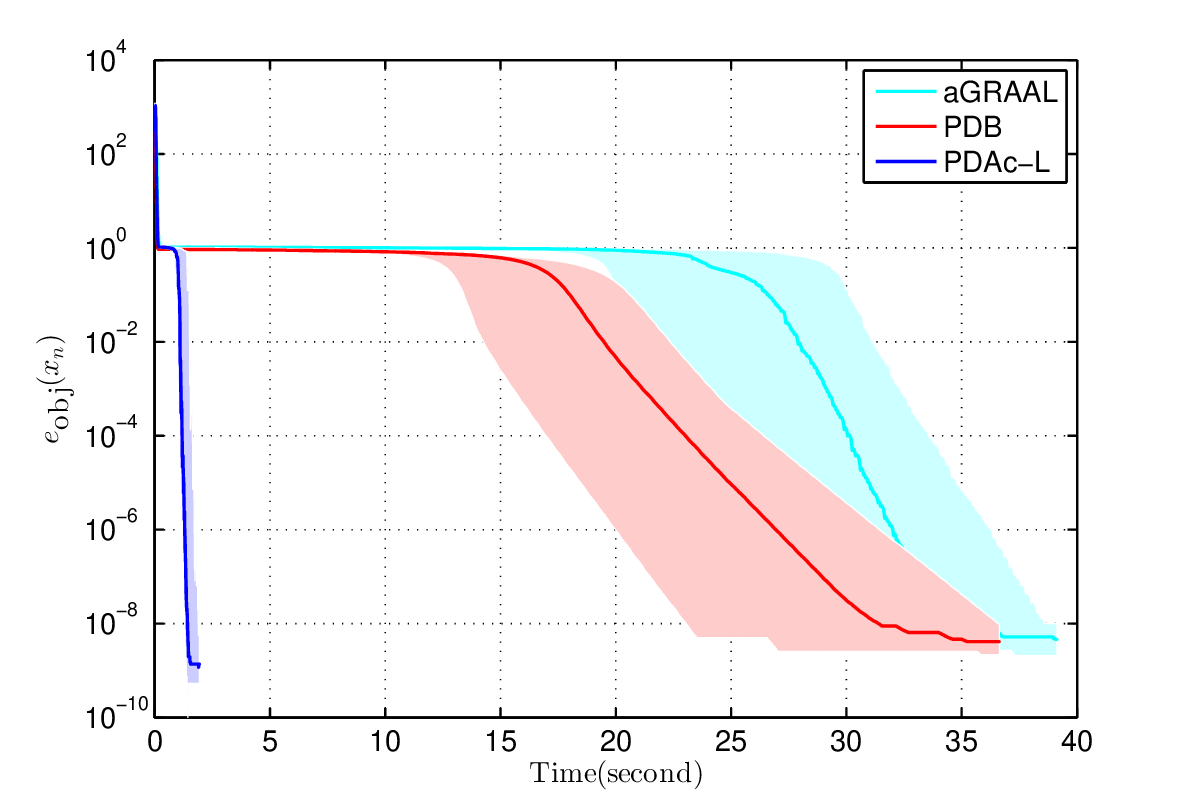}}
\subfigure[]{
\includegraphics[width=0.46\textwidth]{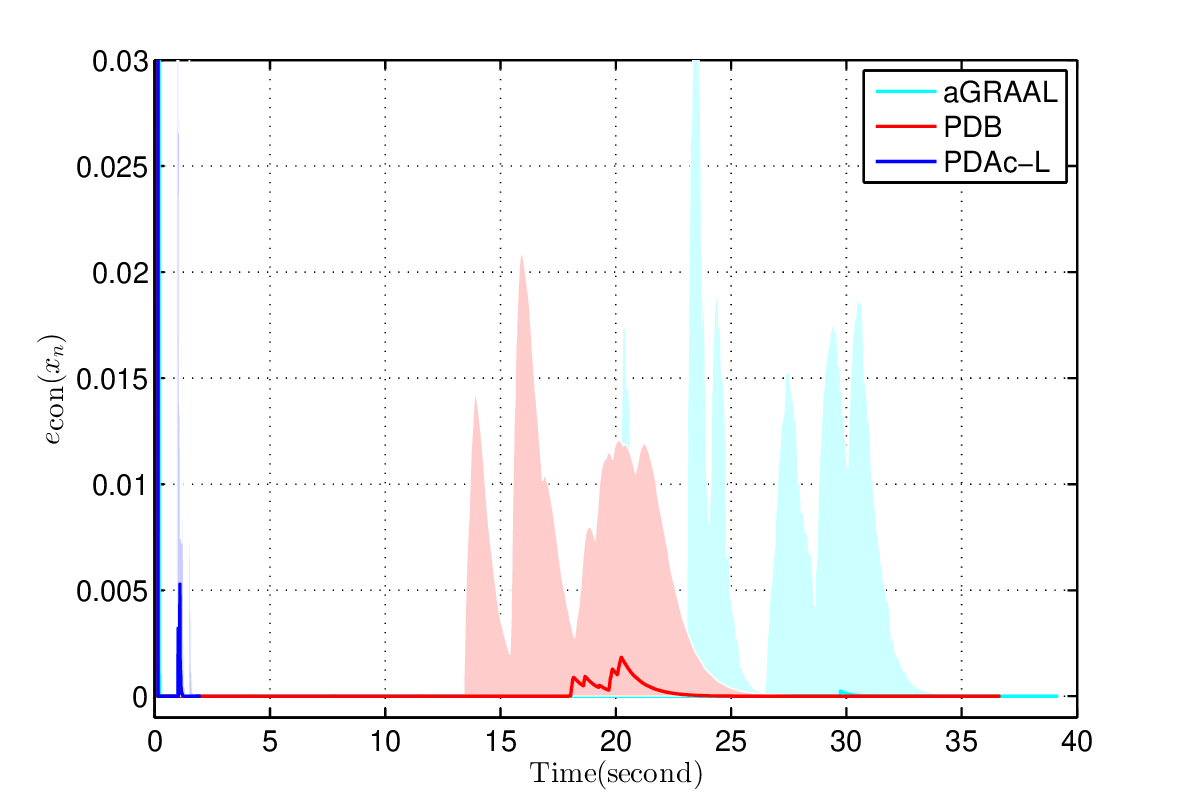}}
\caption{Comparison  results in terms of function value residual (left) and feasibility violations (right) on 10 random QCQPs with $n=500$ and $m=10$. First row:  convergence as the iterations proceeded. Second row: convergence  as CPU time proceeded.
}
\label{Fig:QCQP}
\end{figure}

The results in Table \ref{table1} indicate that aGRAAL runs the slowest among the three algorithms, whereas PDB performs better than aGRAAL, and PDAc-L records the fastest performance. Additionally, in an average sense, PDB requires around one additional linesearch trial step per iteration, while PDAc-L needs roughly one extra linesearch trial step per two iterations. Furthermore, PDAc-L requires significantly fewer iterations than PDB, resulting in much less CPU time consumption for PDAc-L with an adaptive parameter
 $\beta$ than for PDB.

We now proceed to implement aGRAAL, PDB, and PDAc-L on $10$ QCQP instances randomly generated with $n=500$ and $m=10$. The values of $e_{\text{obj}}(x_n)$ and $e_{\text{con}}(x_n)$ as defined in \eqref{stop-1} against the number of iterations and CPU time were plotted in Figure \ref{Fig:QCQP}. The same as in Figure \ref{Fig eta}, each solid line represents the median values, while the shaded area surrounding the lines denotes the variation range observed across the random runs.
The plots shown in Figure \ref{Fig:QCQP} confirmed our earlier conclusion derived from the results in Table \ref{table1}.
Furthermore, the results in Figure \ref{Fig:QCQP} also indicate that PDAc-L presents a smaller variance than the other two algorithms, which suggests that PDAc-L is more stable.

In order to assess the performance of the accelerated algorithm aPDAc-L (Algorithm \ref{algo1-sc}) in solving problem \eqref{QCQP} with a strongly convex $h$, we conducted a test using data generated in a manner similar to that used for the convex case, with the only difference being that, for $j = 0$, we set the diagonal elements of $S_0$ to be randomly generated from the range of $[1,101]$.
We can rewrite $S_0$ as the sum of a positive semidefinite matrix $\tilde{S}_0$ and the identity matrix $I$: $S_0=\tilde{S}_0+I$. This enables us to express $A_0$ as $\Lambda^\top_0 (\tilde{S}_0+I) \Lambda_0=\tilde{A}_0+I$. Moreover, we can represent $h(x)$ as $\tilde{h}(x)+\frac{1}{2}\|x\|^2$, where $\tilde{h}(x) :=\frac{1}{2}x^\top \tilde{A}_0 x + b^\top_0 x$. By doing so, we can apply aPDAc-L to solve $\min\nolimits_{x}\max\nolimits_{y} g(x) +\Phi(x,y)-\iota_+(y)$, where $g(x)$ is strongly convex and defined as $g(x)= \iota_{X}(x)+\frac{1}{2}\|x\|^2$, $\Phi(x,y)=\tilde{h}(x)+\langle H(x),y\rangle$, and $f^*(y) = \iota_+(y)$.

We tested three optimization algorithms: PDAc-L with $\beta=1$, accelerated PDB (aPDB) \cite[Algorithm 2.2]{EYNS2021} and aPDAc-L, both with $\beta_0=1$. We applied these algorithms to a set of 10 randomly generated strongly convex QCQP instances, with $n=500$ and $m=10$, as described earlier.

The results displayed in Figure \ref{Fig:QCQP-sc}, which show the evolution of the objective and constraint violation using $e_{\text{obj}}(x_n)$ and $e_{\text{con}}(x_n)$, demonstrate that aPDAc-L and PDAc-L outperform aPDB in CPU time (as shown in the bottom row of the figure). This is likely due to the fact that aPDAc-L and PDAc-L only require dual variable updates, while aPDB needs to update both primal and dual variables during each linesearch step.
In addition, the superior performance of aPDAc-L and PDAc-L in terms of CPU time can be partially attributed to the fact that these algorithms require fewer additional linesearch trials on average.
Furthermore, aPDAc-L outperforms PDAc-L due to its capacity to leverage the strong convexity of the problem.

\begin{figure}[htp]
\centering
\subfigure[]{
\includegraphics[width=0.46\textwidth]{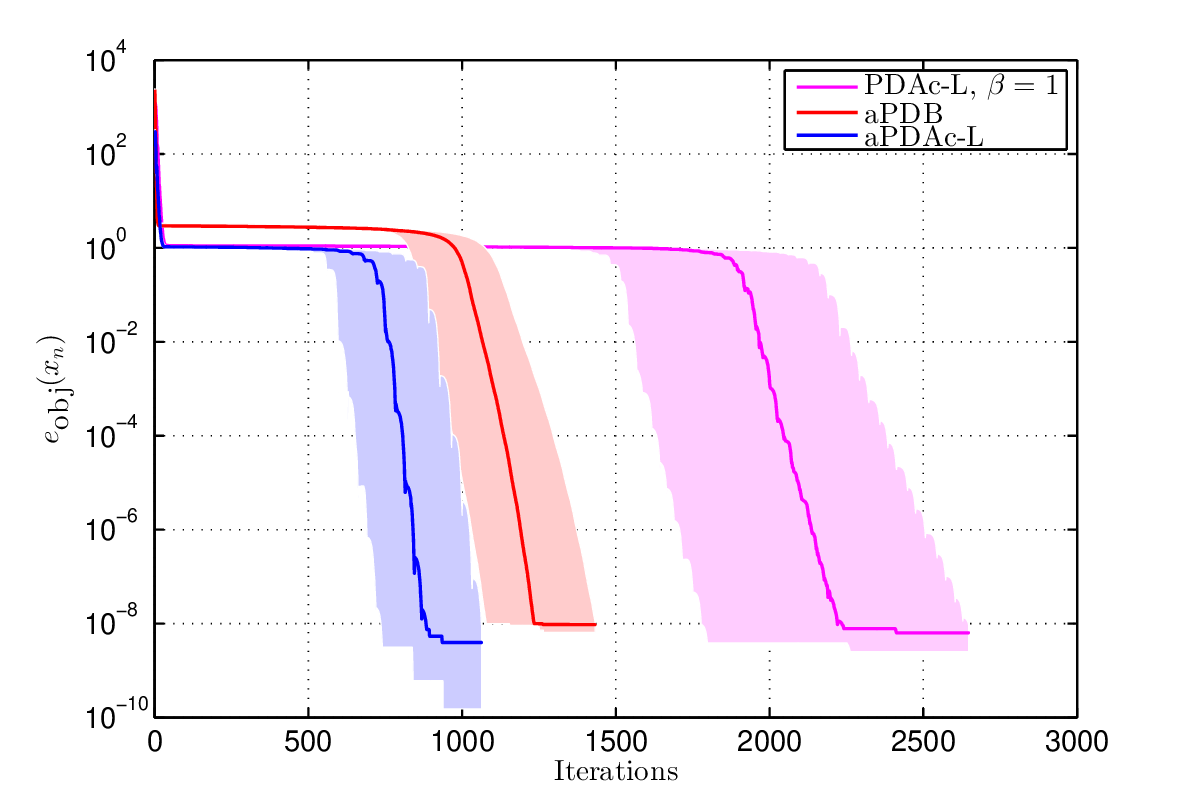}}
\subfigure[]{
\includegraphics[width=0.46\textwidth]{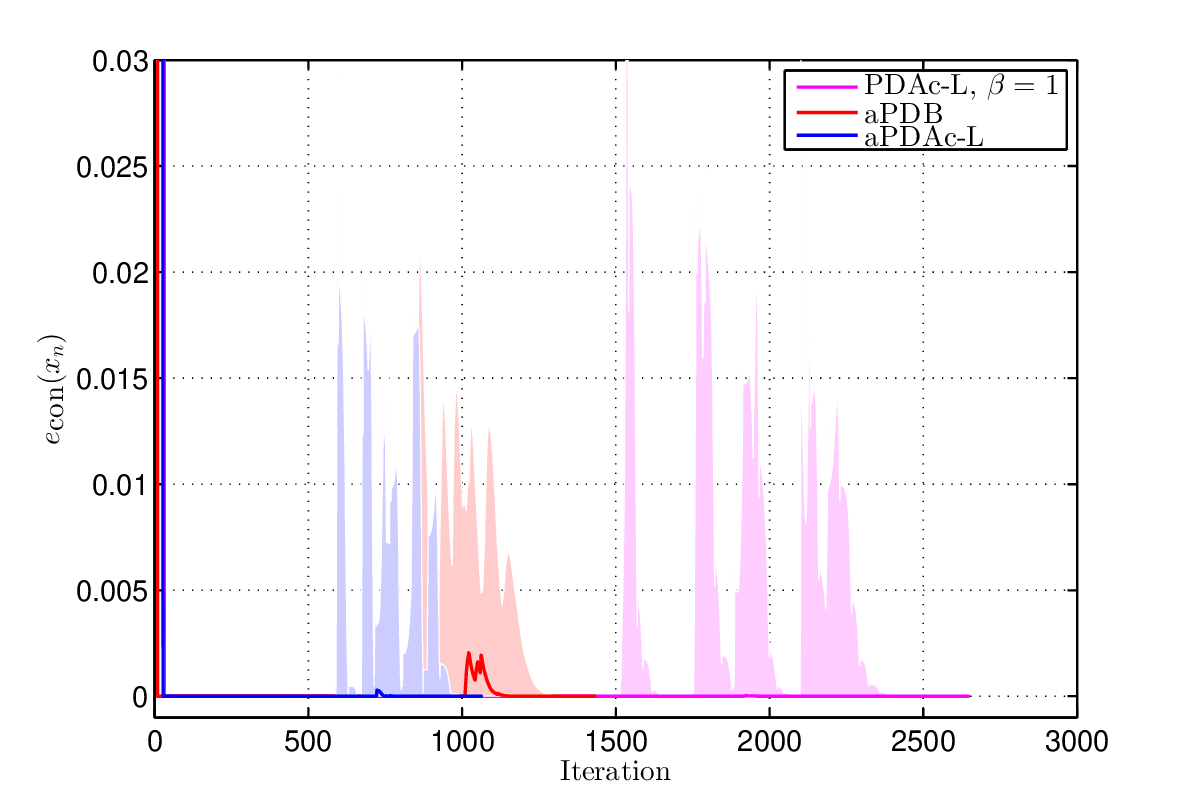}}
\subfigure[]{
\includegraphics[width=0.46\textwidth]{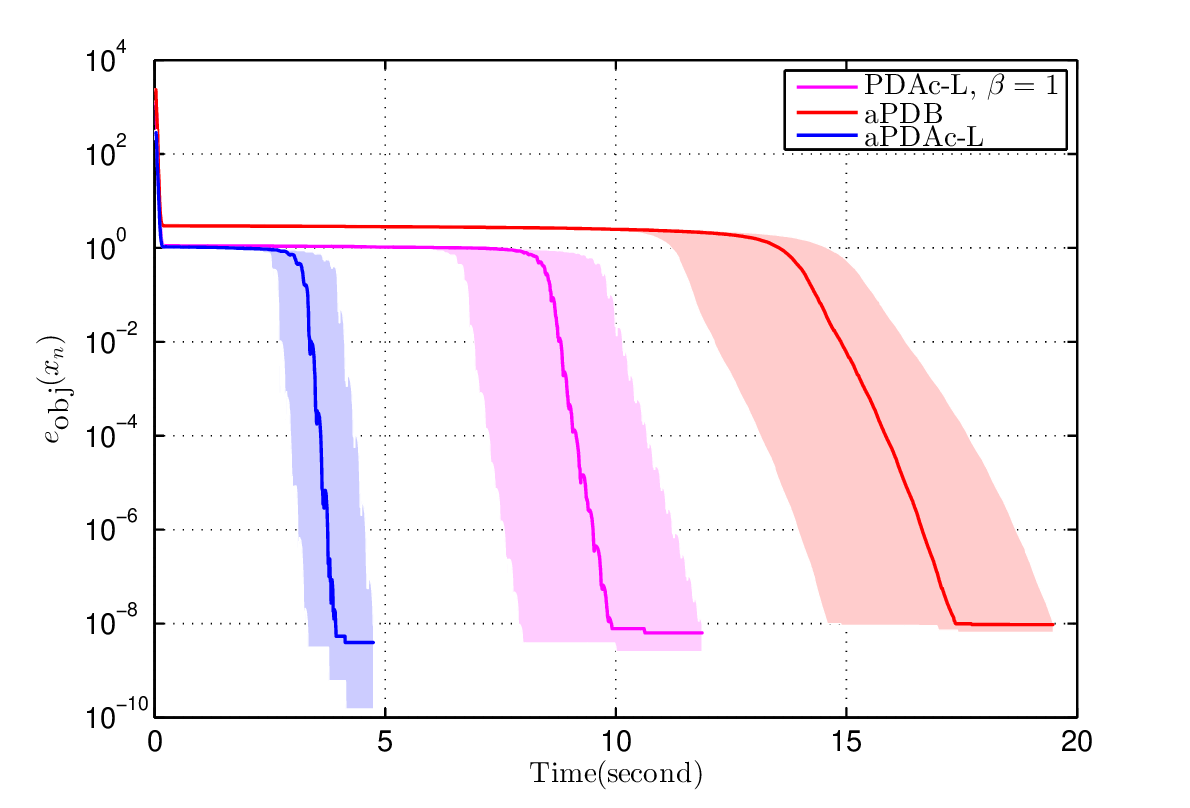}}
\subfigure[]{
\includegraphics[width=0.46\textwidth]{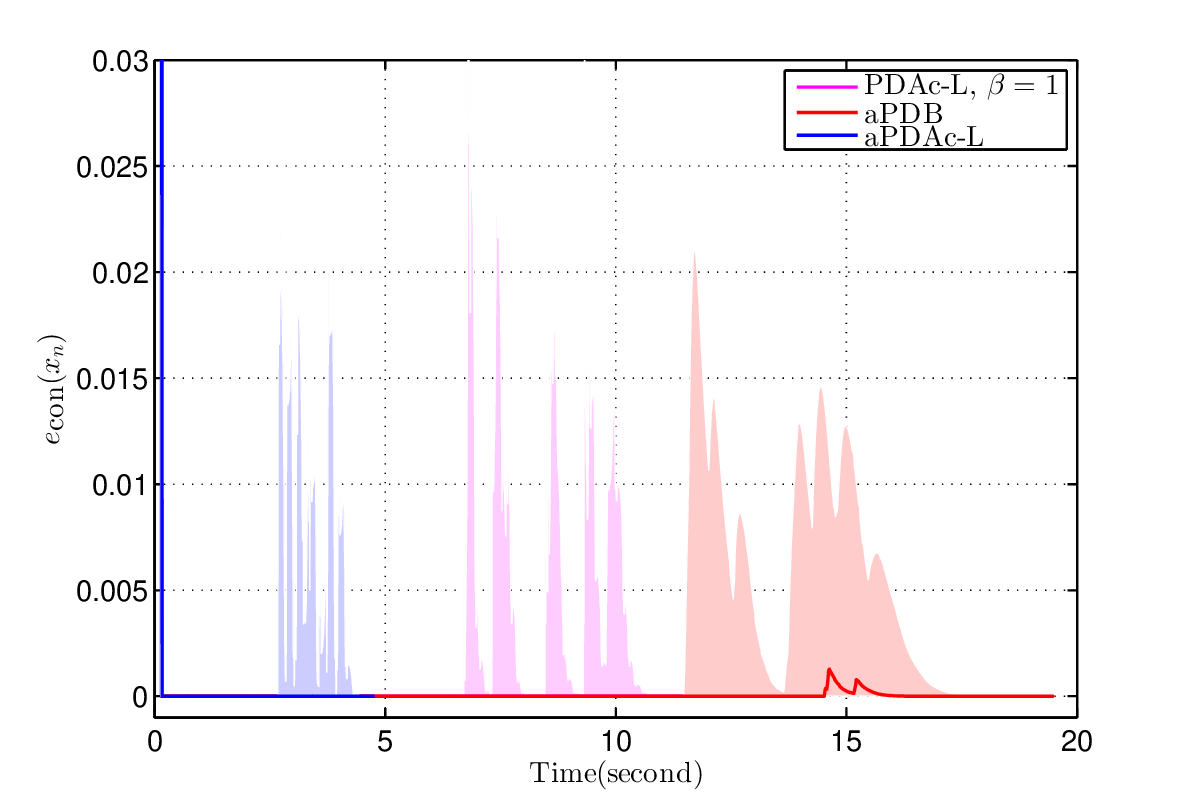}}
\caption{Comparison  results in terms of function value residual (left) and feasibility violations (right) on 10 random strongly convex QCQPs with $n=500$ and $m=10$. First row:  convergence as the iterations proceeded. Second row: convergence  as CPU time proceeded.
}
\label{Fig:QCQP-sc}
\end{figure}
\subsection{Experiments on SLR Problems}
The second problem under investigation is the SLR problem for binary classification. The training set is represented as $\{(a_i,b_i)\in \mathbb{R}^n \times \{\pm 1\}: i = 1,\ldots,m\}$, where $a_i$ denotes the feature vector for each sample and $b_i$ is the corresponding binary label. The SLR problem  is formulated as
\be\label{SLR}
F_{\text{opt}} := \min\limits_{x\in\bR^n}~\Big\{F(x):= t \|x\|_1 + \frac{1}{m}\sum\nolimits_{i=1}^m \log (1 + \exp(-b_ia^\top_ix)) \Big\},
\ee
where $t>0$ is a regularization parameter. By letting $h(x) := \frac{1}{m}\sum\nolimits_{i=1}^m \log (1 + \exp(-b_ia^\top_ix))$ and $g(x) := t\|x\|_1$, \eqref{SLR} reduces to \eqref{composite_opt} and thus can be solved by Algorithm \ref{algo2}.
In our experiments, we set $t=0.005\|A^\top b\|_{\infty}$ as in \cite{Malitsky2019Golden} with $A^\top =[a_1, a_2, \ldots  a_m]$ and $b=(b_1, b_2,\ldots,b_m)^\top$.
We next compare the proposed Algorithm \ref{algo2} (aPGMc) with aGRAAL proposed in \cite{Malitsky2019Golden}.
We used $\psi=3/2$ and $\varphi=10/9$ for aGRAAL, and $\psi=2$ (which is larger than that for aGRAAL) and either $\varphi=10/9$ or $\varphi=6/5$ for aPGMc.
In our experiments, we took two popular datasets from LIBSVM\footnote{Website: \url{https://www.csie.ntu.edu.tw/~cjlin/libsvmtools/datasets/}}: \emph{a9a} with $(m,n)=(32561,123)$ and \emph{rcv1} with $(m,n)=(20242,47236)$.

Following \cite{Malitsky2019Golden}, we executed aPGMc and aGRAAL for a sufficient number of iterations until the condition $\|x_n-\prox_{\tau_n g}(x_n - \tau_n\nabla f(x_n))\|\leq 10^{-6}$ was satisfied. Next, we defined $F_\text{opt}$ as the smallest function value $F(x_n)$ observed during the execution.
Figure~\ref{Fig:SLR} shows a comparison of the decreasing behavior of $F(x_n)-F_\text{opt}$ with respect to CPU time. The results indicate that aPGMc with either $\varphi=10/9$ or $\varphi=6/5$ converges faster than aGRAAL for both datasets. One plausible reason for this faster convergence is that aPGMc uses $\psi=2$, which is larger than the value of $\psi=3/2$ used in aGRAAL, and increasing $\psi$ brings $z_n$ closer to the current iterate $x_{n-1}$.
For aPGMC, on the other hand, the use of the larger value $\varphi=6/5$ results in slightly faster convergence compared to $\varphi=10/9$. This may be because increasing $\varphi$ leads to a possibly faster increase in the stepsize $\tau_{n-1}$, as defined in Algorithm \ref{algo2}.

\begin{figure}[htpb]
\centering
\subfigure[a9a]{
\includegraphics[width=0.46\textwidth]{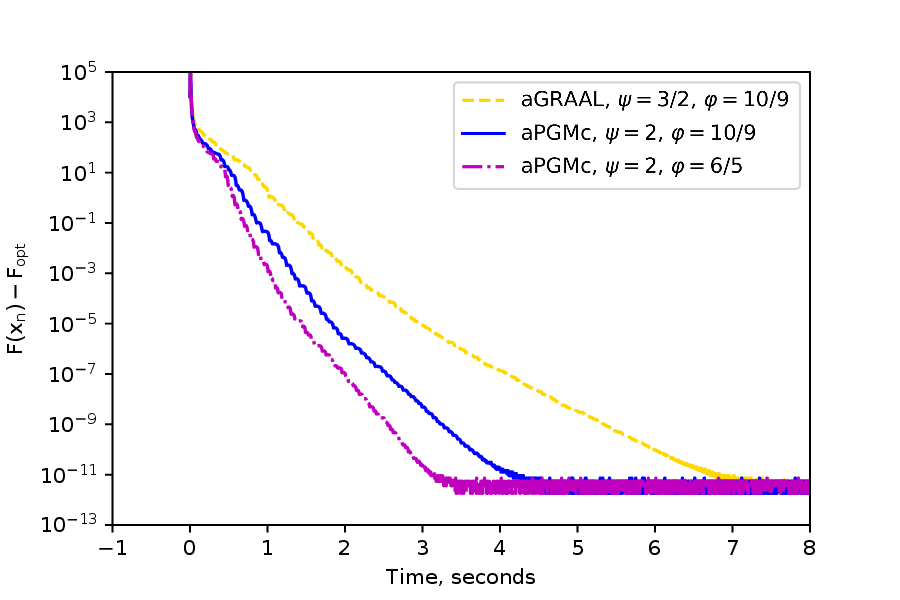}}
\subfigure[rcv1]{
\includegraphics[width=0.46\textwidth]{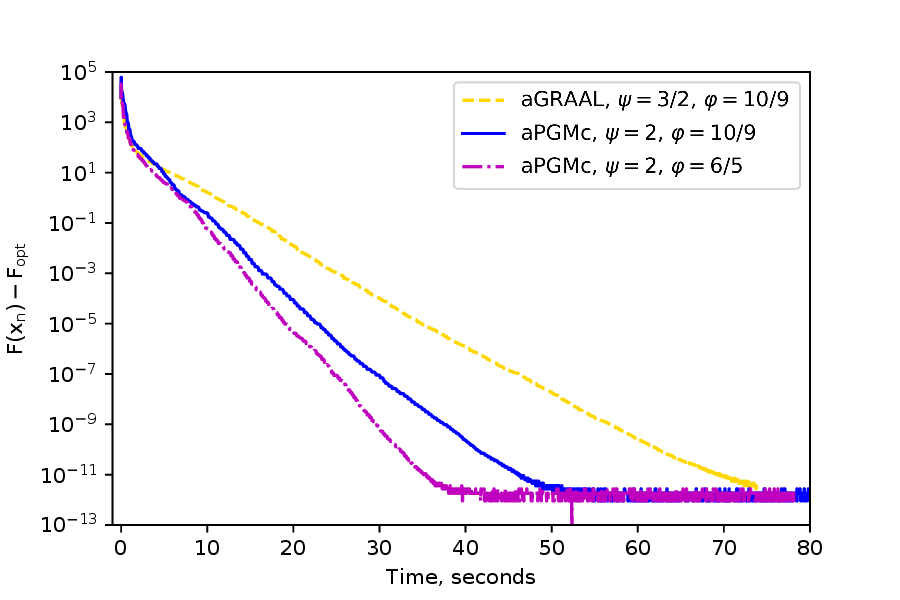}}
\caption{Comparisons results of aGRAAL and aPGMc on SLR problem.
The decreasing behavior of $F(x_n)-F_\text{opt}$ with respect to CPU time on problem \eqref{SLR} are presented.
Letf: dataset a9a. Right: dataset rcv1.}
\label{Fig:SLR}
\end{figure}

\section{Conclusion}
\label{sec-conclusion}
This paper proposes a  convex combination based primal-dual algorithm  with adaptive linesearch for solving structured convex-concave saddle point problems with generic nonlinear coupling term. The proposed linesearch strategy has the advantage of updating only one variable per linesearch iteration, which is computationally cheaper for many practical problems. Global pointwise convergence and $\cO(1/N)$ ergodic convergence rates are also established. For optimization problems with nonlinear compositional structure, ergodic convergence rates are measured by both the function value residual and constraint violations. Furthermore, when one of the component functions is strongly convex, faster $O(1/N^2)$ ergodic convergence rate results, quantified by the same measures, are established by adaptively choosing some algorithmic parameters.
For composite convex optimization problems with finite sum structure,  the proposed algorithm
will reduce to a fully adaptive proximal gradient algorithm  with convex combination, which gives
explicit stepsize rules without requiring linesearch.
 The resulting algorithm allows for a wider choice of key parameters compared to the adaptive algorithm proposed by Malitsky \cite{Malitsky2019Golden}. Numerical experiments on QCQP and SLR problems demonstrate the benefits of the proposed algoirthms.

\section*{Acknowledgements}
The authors would like to express gratitude towards the authors of \cite{EYNS2021,Malitsky2019Golden} for sharing their codes, which were used for fair comparisons in this study.

\begin{appendix}
\section{Proof of Lemma \ref{lem-beta-sc}}\label{proof:lem-beta-cnn}
\begin{proof}
(i) Recall that for problem \eqref{saddle-point-special} we have $\Phi(x,y) = h(x)+\langle H(x),y\rangle$.
Let $\theta_n(\la)$ and $\Phi_n^y(\la)$ be defined in \eqref{def:ytP} and  define $y_{n}(\la)  :=  \prox_{\beta_n \la f^*}(y_{n-1}+\beta_n\la H(x_n))$ for $\lambda > 0$.
Assume, by contradiction, that the linesearch procedure defined in Algorithm~\ref{algo1-sc} fails to terminate at the $n$th iteration.
Then for all  $i=0,1,2,\ldots$ and  $\la =\tau \mu^i$, where
$\tau = \min\{\varphi\tau_{n-1}, \tau_{\max}\}$ as defined in Step 2 of the algorithm, we have
\be\label{linesearch-cond1-v-sc}
\frac{\la \tau_{n-1}}{\xi}\|\theta_n(\la)\|^2  >
\frac{\kappa_{n-1}\beta_{n-1}}{\beta_n} \|x_{n}-x_{n-1}\|^2
+\frac{1}{\beta_n}\|y_n(\la)-y_{n-1}\|^2.
\ee
Similar to the proof of Lemma \ref{lem_bound}, for all $\la=\tau \mu^i$ with $i=0,1,2,\ldots$, we have $y_n(\la)\in B[y_{n-1}; r]$.
Then, by combining \eqref{ineq_la}, \eqref{linesearch-cond1-v-sc} and  $\la=\tau \mu^i$, we obtain
\ben
 {2 \tau\mu^i \tau_{n-1}\over \xi} \Big( L_{xx}^2 \|x_{n}-x_{n-1}\|^2 +
 L_{xy}^2 \|y_{n}(\la)-y_{n-1}\|^2\Big) > \frac{\beta_{n-1}\kappa_{n-1}}{\beta_n} \|x_{n}-x_{n-1}\|^2
+\frac{1}{\beta_n}\|y_n(\la)-y_{n-1}\|^2,
\een
which implies $2 \tau \mu^i \tau_{n-1}L_{xx}^2 / \xi >  \beta_{n-1}\kappa_{n-1} / \beta_n$
or $ 2\tau \mu^i \tau_{n-1}L_{xy}^2/\xi > 1/\beta_n$ for all $i\geq 0$.
This is impossible since $\mu^i\rightarrow 0$ as $i\rightarrow \infty$,  which indicates that the linesearch procedure must terminate.

(ii)  We can observe that $\widetilde{J}(x_n,w_n,y^\star)$, $A_n(y^\star)$, and $B_n$ are all nonnegative and that $\beta_n\geq \beta_{n-1}$ for all $n\geq1$, as defined in \eqref{beta-gstrong}. Therefore, using \eqref{key-ineq2}, we can show that $\beta_1A_n(y^\star)\leq \beta_nA_n(y^\star) \leq \beta_{n-1}A_{n-1}(y^\star) \leq \cdots \leq \beta_1A_1(y^\star)$.
Hence, using the definition of $A_n(y^\star)$ in \eqref{def:AnyBn}, we can derive
\ben
\frac{\psi}{2(\psi-1)}\|z_{n+1}-{x^{\star}}\|^2\leq A_n(y^\star) \leq A_1(y^\star) \text{~~and~~}
\frac{1}{2}\|y_{n-1}-y^{\star}\|^2\leq \beta_nA_n(y^\star) \leq \beta_1A_1(y^\star),
\een
which implies that  $\{z_n: n\geq 1\}$ and $\{y_n: n\geq 1\}$ are bounded. Since $x_n=(\psi z_{n+1}-z_{n})/(\psi-1)$, which follows from the first relation in \eqref{x_updating}, the sequence $\{(x_n,y_n,z_n): n\geq 1\}$ is bounded.

(iii) Let $h(\tau) := 1+ \frac{(\psi-\varphi)\gamma\tau}{\psi+\varphi\gamma\tau}$, which is an increasing function of $\tau>0$. Since $\tau_{n}\leq\tau_{\max}$ for all $n\geq 0$, it follows that
$\varsigma := h(\tau_{\max}) \geq h(\tau_n) =1+\gamma \rho_{n+1}\tau_{n} \geq 1$.  From part (ii) of this lemma, the sequence $\{(x_n,y_n): n\geq 1\}$ is bounded. Further considering the left-hand-side inequality in \eqref{result1}, we observe that the linesearch condition \eqref{linesearch-cond-sc} is satisfied provided that
\be\label{condition-line-sc}
\beta_{n-1}\kappa_{n-1}/\beta_n- 2\tau_{n}\tau_{n-1}L_{xx}^2/\xi\geq 0
~~\mbox{and}~~
1/\beta_n -  2\tau_n\tau_{n-1}L_{xy}^2 / \xi \geq 0.
\ee
Since $\kappa_{n-1}\geq \omega \delta_{n-1}  = \omega\tau_{n-1}/\tau_{n-2}$, $\beta_n = (1+ \gamma\rho_n \tau_{n-1})\beta_{n-1} \leq \varsigma\beta_{n-1}$ and $\tau_n \leq \tau_{\max}$ for all $n$,
we have
\be\label{jy-7}
\frac{\kappa_{n-1}\beta_{n-1}}{\tau_{n-1}\sqrt{\beta_n}}\geq \frac{\omega \beta_{n-1}}{\tau_{n-2}\sqrt{\beta_n}}\geq \frac{\omega \sqrt{\beta_{0}}}{\tau_{\max}\sqrt{\varsigma}}.
\ee
For convenience, we define
$\Upsilon_1 := \frac{\xi }{2L_{xx}^2} \frac{\omega \sqrt{\beta_{0}}}{\tau_{\max}\sqrt{\varsigma}}$ and $\Upsilon_2:={ \xi\over 2 L_{xy}^2}$.
It follows from \eqref{jy-7} that
the left-hand-side inequality in \eqref{condition-line-sc} holds when
$\sqrt{\beta_n}\tau_n\leq \Upsilon_1$, while the right-hand-side inequality is equivalent to
 $\beta_n\tau_n\tau_{n-1} \leq \Upsilon_2$.

Then, for any fixed $n\geq 1$, setting $\tau_n=\underline{\tau}:=\min\big( \Upsilon_1/\sqrt{\beta_n}, \Upsilon_2/(\beta_n\tau_{n-1})\big)$ will make the two inequalities in \eqref{condition-line-sc} being satisfied, and so is the linesearch condition \eqref{linesearch-cond-sc}.
Since $\tau_n = \tau\mu^i$, where $i$ is the smallest nonnegative integer such that \eqref{linesearch-cond-sc} is satisfied, there must hold $\tau_n>\mu \underline{\tau}$, which implies that either $\tau_n > {\mu \Upsilon_1 / \sqrt{\beta_n} }$ or $\tau_n > {\mu \Upsilon_2 / (\beta_n\tau_{n-1}) }$ for all $n\geq 1$.
In the former case, we have $\sqrt{\beta_n}\tau_n > \mu \Upsilon_1$.
In the later case, with $n$ replaced by $n+1$, we have $\beta_{n+1}\tau_{n}\tau_{n+1} > {\mu \Upsilon_2  }$. Then, by
$\tau_{n+1} \leq \varphi \tau_{n} $, we derive
$\beta_{n+1}\tau_{n}^2\geq\beta_{n+1}\tau_{n}\tau_{n+1}/\varphi> \mu\Upsilon_2/\varphi$,
and thus
\ben
\beta_{n}\tau_{n}^2
\stackrel{(\ref{beta-gstrong})}= {\beta_{n+1} \tau_{n}^2 \over   1+\gamma \rho_{n+1}\tau_{n} }
>\frac{\mu \Upsilon_2/\varphi}{1+\gamma \rho_{n+1}\tau_{n}}\geq \frac{\mu\Upsilon_2}{\varphi\varsigma } > 0.
\een
Hence,  we always have $ \sqrt{\beta_n} \tau_n > c_1:=  \min(\mu \Upsilon_1, \sqrt{\mu\Upsilon_2/(\varphi\varsigma)})> 0$.
Since $\tau_n\leq \tau_{\max}$, we have
\be\label{relation-beta}
\beta_{n+1} \stackrel{(\ref{beta-gstrong})}
= \beta_{n}\Big(1+ \frac{\gamma(\psi-\varphi)\tau_n}{\psi+\varphi\gamma\tau_{n}}\Big)
 \geq \beta_{n}+ \frac{ \gamma (\psi-\varphi)\sqrt{\beta_n}\tau_n}{\psi+\varphi\gamma \tau_{\max}} \sqrt{\beta_n} \geq \beta_{n}+
  \varrho \sqrt{\beta_{n}},
\ee
where $\varrho:= \frac{\gamma (\psi-\varphi)c_1}{\psi+\varphi\gamma \tau_{\max}}>0$.
From \eqref{relation-beta}, it is easy to show by induction that $\beta_n\geq c_2 n^2$ for all $n\geq1$ with $c_2:= \min(\varrho^2/9, \beta_1) > 0$. This completes the proof.

\end{proof}

\end{appendix}
\bibliographystyle{abbrv}
\bibliography{files/cxktex}

\end{document}